\newtheorem{theorem}{Theorem}[section]
\newtheorem{proposition}[theorem]{Proposition}
\newtheorem{lemma}[theorem]{Lemma}
\newtheorem{corollary}[theorem]{Corollary}
\theoremstyle{definition}
\newtheorem{definition}[theorem]{Definition}
\newtheorem*{remark}{Remark}
\newcommand{\C}{\mathbb C}
\newcommand{\R}{\mathbb R}
\newcommand{\Z}{\mathbb Z}
\newcommand{\N}{\mathbb N}
\newcommand{\ovl}[1]{\overline{#1}}
\newcommand{\zbar}{\ovl z}
\newcommand{\cbar}{\ovl c}
\renewcommand{\Re}{\operatorname{Re}}
\renewcommand{\Im}{\operatorname{Im}}
\newcommand{\eps}{\varepsilon}
\renewcommand{\phi}{\varphi}
\newcommand{\hide}[1]{}
\newcommand{\M}{\mathcal {M}^*}
\newcommand{\Md}{\M_d}
\newcommand{\sm}{\setminus}
\newcommand{\Cstar}{\C^*}
\newcommand{\Includegraphics}[2]{\includegraphics[#1]{#2}}
\newcommand{\re}{\Re}
\renewcommand{\marginpar}[1]{}
\title{Multicorns are not Path Connected}
\author{John Hubbard}
\author{Dierk Schleicher}
\dedicatory{Dedicated to John Milnor on the occasion of his 80th
birthday, \\in gratitude for much inspiration, friendship, and
generosity: mathematical and otherwise.}
\begin{document}

\renewcommand{\bottomfraction}{.5}

\maketitle

\bigskip
\begin{center}
Milnor, that intrepid explorer,\\
Traveled cubics in hopes to discover\\
\quad Some exotic new beast:\\
\quad North-west and south-east \\
He found tricorns lurking there under cover.
\end{center}

\section{Introduction}

The \emph{multicorn} $\Md$ is the connectedness locus in the space of 
antiholomorphic unicritical polynomials $p_c(z)=\zbar^d+c$ of degree 
$d$, i.e., the set of parameters for which the Julia set is 
connected. The special case $d=2$ is the \emph{tricorn}, which is the 
formal antiholomorphic analog to the Mandelbrot set.

\begin{figure}
\framebox{
\Includegraphics{height=60mm}{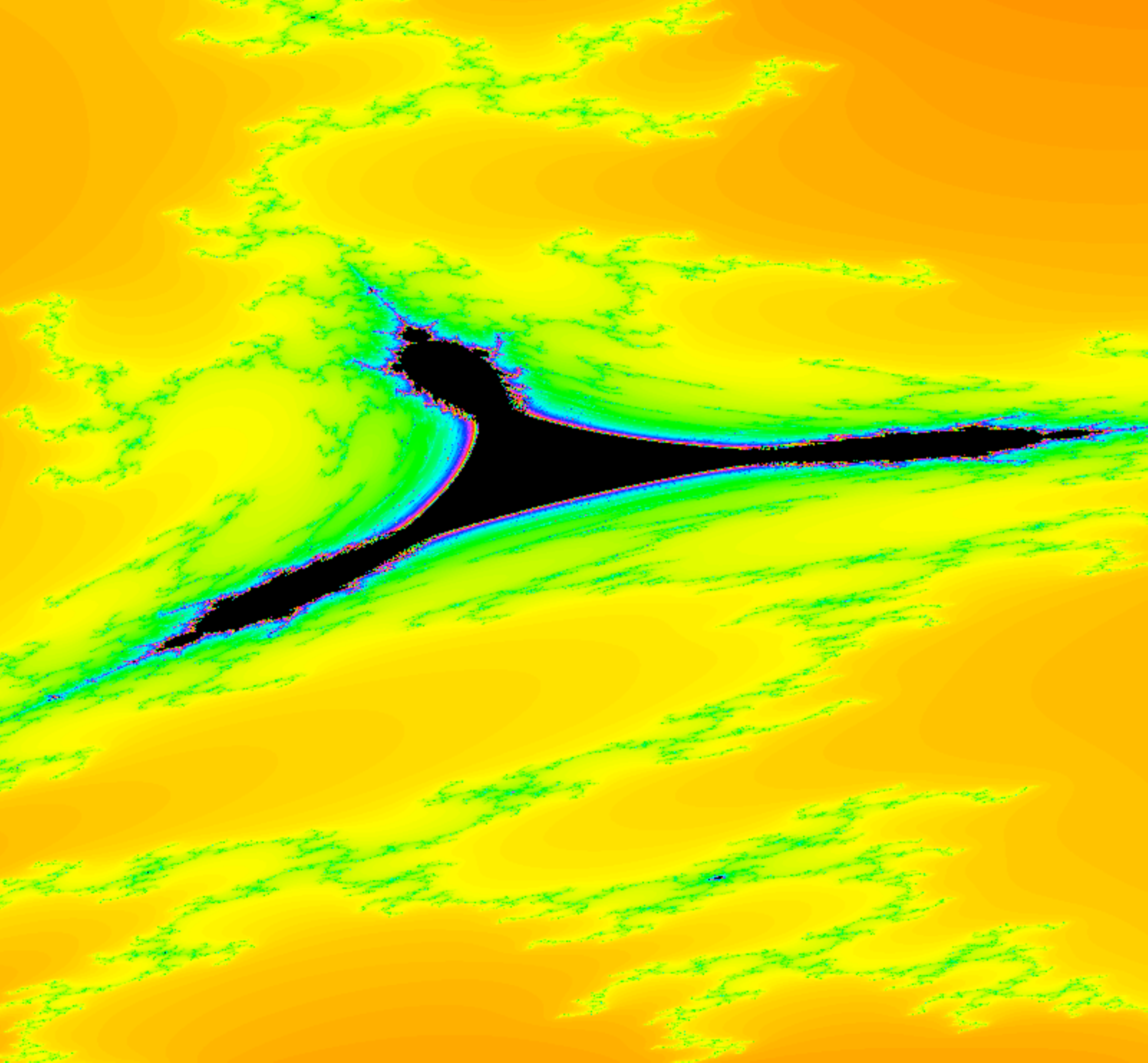}
}
\caption{A ``little tricorn'' within the tricorn $\mathcal M_2^*$ illustrating that the ``umbilical cord'' converges to the little tricorn without landing at it. }
\end{figure}

The second iterate is
\[
p_c^{\circ 2}(z)=\ovl{ (\ovl z^{d}+c)}^d+c=(z^d+\ovl c)^d+c
\]
and thus holomorphic in the dynamical variable $z$, but no longer 
complex analytic in the parameter $c$. Much of the dynamical theory 
of antiholomorphic polynomials (in short, antipolynomials) is thus in 
analogy to the theory of holomorphic polynomials, except for certain 
features near periodic points of odd periods. For instances, a 
periodic point of odd period $k$ may be the simultaneous landing 
point of dynamic rays of periods $k$ and $2k$ (which is invisible 
from the holomorphic second iterate of the first return map); see 
\cite[Lemma~3.1]{Multicorns1}.

However, the theory of parameter space of multicorns is quite 
different from that of the Mandelbrot set and its higher degree 
cousins, the multibrot sets of degree $d$, because the parameter 
dependence is only real analytic. Already the open mapping principle 
of the multiplier map fails, so it is not a priori clear that every 
indifferent orbit is on the boundary of a hyperbolic component, and 
that bifurcations multiplying periods occur densely on boundaries of 
hyperbolic components. However, it turns out that many properties of 
parameter space are quite similar to that of the Mandelbrot set, 
except near hyperbolic components of odd period. For instance, there 
is a simple recursive relation for the number of hyperbolic 
components of period $n$ for the multibrot set, given by 
$s_{d,n}=d^{n-1}-\sum_{k|n,\, k<n} s_{d,k}$: for multicorns, the same 
result holds, except if $n$ is twice an odd number; in that case, the 
number of hyperbolic components equals $s_{d,n}+2s_{d,n/2}$ 
\cite{Multicorns2}. Similarly, the multiplier map is an open map on 
the closure of any hyperbolic component of even period, except where 
it intersects the boundary of an odd period hyperbolic component.

However, boundaries of odd period hyperbolic components have some 
quite interesting properties. The multiplier map is constant along 
their boundaries (all boundary points have parabolic orbits of 
multiplier $+1$); bifurcations only double the 
period (no higher factors), and these period-doublings occur along arcs rather than at 
isolated points (see Corollary~\ref{Cor:BifArcs}). Adjacent to these 
parabolic arcs, there are comb-like 
structures where the multicorn fails local connectivity, and  $\sin 1/x$-like structures accumulate on the 
centers of many parabolic arcs:  even 
pathwise connectivity fails there. Nonetheless, some boundary arcs of odd 
period hyperbolic components also feature ``open beaches'' with 
sub-arcs of positive length that form part of boundary of a 
hyperbolic component without any further decorations, so the 
hyperbolic component and the escape locus meet along a smooth arc. 
(We do not know whether the number of such arcs is finite or not.)

\medskip
\emph{Overview of Paper and Results}.
In this paper we study the boundaries of hyperbolic components of $\Md$ of odd period, focussing on local connectivity and pathwise connectivity. In 
Section~\ref{Sec:ParabolicDynamics}, we investigate parabolic 
dynamics especially of odd period, review Ecalle 
cylinders and their special features in antiholomorphic dynamics: the 
existence of an invariant curve called the {\it equator}. We then discuss 
parabolic arcs on the boundary of hyperbolic components of odd period. 
In Section~\ref{Sec:FixedPointIndex}, we investigate these arcs from 
the point of view of the holomorphic fixed point index, and we show 
that period-doubling bifurcations occur near both ends of all 
parabolic arcs. We then discuss, in Section~\ref{Sec:ParabolicPerturbations}, perturbations of parabolic periodic points and introduce continuous coordinates for the perturbed dynamics.
In Section~\ref{Sec:Combinatorics} we introduce an 
invariant tree in parabolic dynamics, similar to the Hubbard 
tree for postcritically finite polynomials, and discuss the dynamical 
properties of parabolic maps that we will later transfer into 
parameter space. This transfer is then done in 
Section~\ref{Sec:Non-Pathwise}, using perturbed Fatou coordinates: 
these are somewhat simplified in the antiholomorphic setting because 
of the existence of the invariant equator. Some concluding remarks 
and further results are discussed in Section~\ref{Sec:FinalSection}.

\medskip
\emph{Relations to Holomorphic Parameter Spaces}.
Much of the relevance of tricorn (and the higher dimensional 
multicorns) comes from the fact that it is related to natural 
holomorphic parameter spaces. Clearly, the tricorn space is the (real 
two-dimensional but not complex-analytic) slice $c=\ovl a= b$ in the 
complex two-dimensional space of maps $z\mapsto (z^2+a)^2+b$, one of 
the natural complex two-dimensional spaces of polynomials. Perhaps 
more interestingly, the tricorn is naturally related to the space of 
\emph{real} cubic polynomials: this space can be parametrized as 
$z\mapsto \pm z^3-3a^2z +b$ with $a,b\in\R$. It was in the context of 
this space  that Milnor discovered and explored the tricorn 
\cite{MilnorCubics,MilnorHypComps} as one of the prototypical local 
dynamical features in the presence of two active critical points; compare Figure~\ref{Fig:RealCubics}. To 
see how antiholomorphic dynamics occurs naturally in the dynamics of 
a real cubic polynomial $p$, suppose there is an open bounded 
topological disk $U\subset\C$ containing one critical point so that 
$p(U)$ contains the closure of the complex conjugate of $U$. Denoting 
complex conjugation of $p$ by $p^*$ and the closure of $U$ by $\ovl 
U$, we have $p^*(U)\supset \ovl U$. If, possibly by suitable 
restriction, the map $p\colon U\to p(U)$ is proper holomorphic, then 
$p^*\colon U\to p^*(U)$  is the antiholomorphic analogue of a 
polynomial-like map in the sense of Douady and Hubbard. Since $p$ 
commutes with complex conjugation, the dynamics of $p$ and of $p^*$ 
are the same (the even iterates coincide), so the dynamics of $p$ 
near one critical point is naturally described by the antiholomorphic 
polynomial $p^*$ (and the other critical point is related by 
conjugation). The advantage of the antiholomorphic point of view is 
that, while $U$ and $p(U)$ may be disjoint domains in $\C$ without 
obvious dynamical relation, there is a well-defined antiholomorphic 
dynamical system $p^*\colon U\to p^*(U)$. This is even more useful 
when $\ovl U$ is not a subset of $p^*(U)$, but of a higher iterate: 
in this case, like for ordinary polynomial-like maps, the interesting 
dynamics of a high degree polynomial is captured by a low-degree 
polynomial or, in this case, antipolynomial.

\begin{figure}
\Includegraphics{height=47mm}{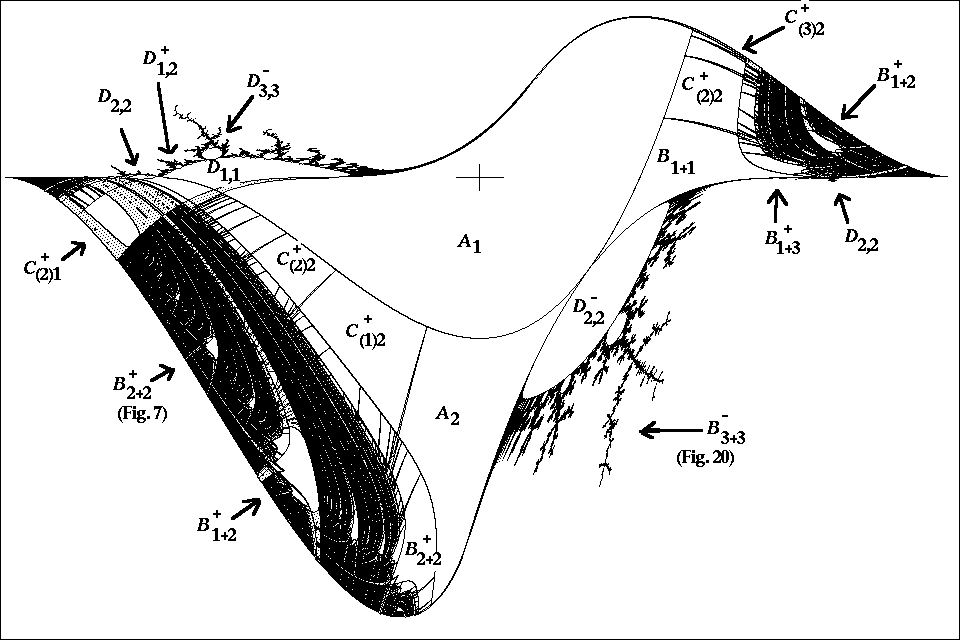}
\Includegraphics{height=47mm}{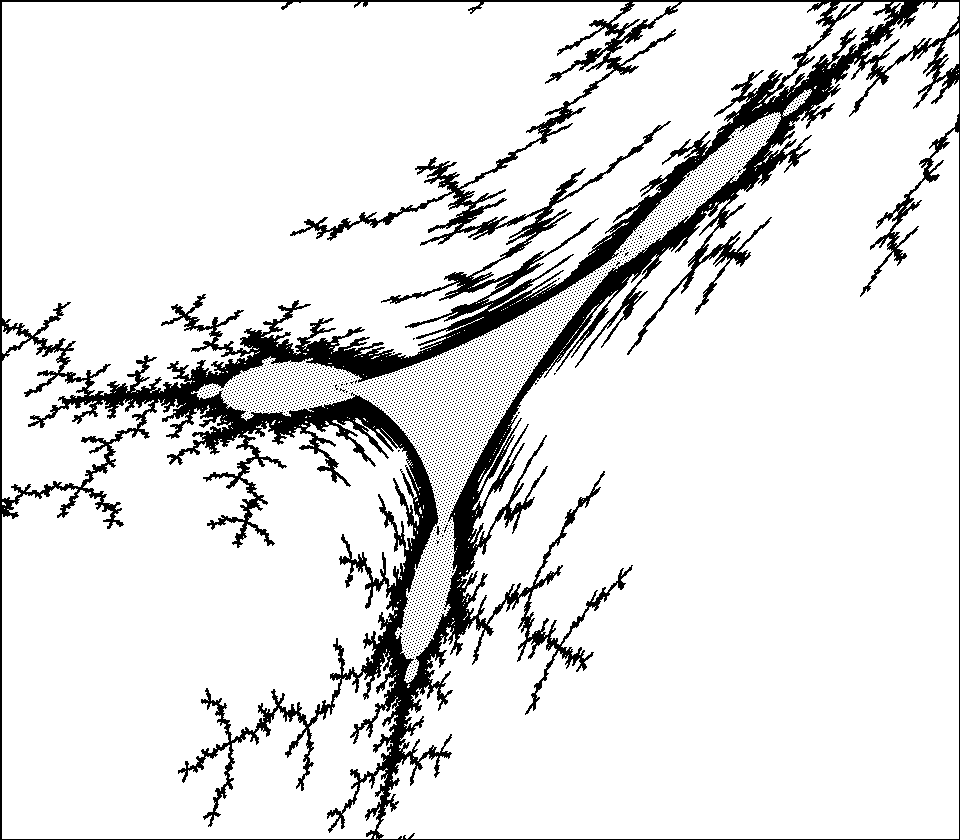}
\caption{The connectedness locus of real cubic polynomials and a detail from the south-east quadrant, showing a tricorn-like structure. (Pictures from Milnor~\cite{MilnorCubics}.)}
\label{Fig:RealCubics} 
\end{figure}

\medskip
\emph{Are There Embedded Tricorns?}
It was numerically ``observed'' by several people that the tricorn 
contains, around each hyperbolic component of even period, a small 
copy of the Mandelbrot set, and around each odd period component a 
small copy of the tricorn itself; and similar statements hold for 
certain regions of the real cubic connectedness locus --- much as the 
well known fact that the Mandelbrot set contains a small copy of 
itself around each hyperbolic component. A small tricorn within the 
big one is shown in Figure~\ref{Fig:BigAndLittleTricorn}. However, we 
believe that most, if not all, ``little tricorns'' are not 
homeomorphic to the actual tricorn (both within the tricorn space and 
within the real cubic locus); quite possibly most little tricorns 
might not even be homeomorphic to each other. Indeed, a subset of the 
real axis connects the main hyperbolic component (of period $1$) to 
the period $3$ ``airplane'' component (along the real axis, the 
tricorn and the Mandelbrot set coincide obviously): we say that the 
``umbilical cord'' of the period $3$ tricorn lands. However, we prove 
for many little tricorns that their umbilical cords do not land but 
rather forms some kind of $\sin 1/x$-structure. Our methods only 
apply to ``prime'' little tricorns: these are the ones not contained 
in larger ``little tricorns'', so we do not disprove continuity of 
the empirically observed embedding map given by the straightening 
theorem (even though this seems very likely). Two little tricorns 
could only be homeomorphic to each other if they have matching sizes 
of the wiggles of the umbilical cords of all the infinitely many 
little tricorns they contain, where the size of such a wiggle is 
measured in terms of Ecalle heights as introduced below.

Failure of continuity of the straightening map was shown in other 
contexts, for instance by Epstein and by Inou. Failure of local 
connectivity and of pathwise connectivity was numerically observed by 
Milnor \cite{MilnorCubics} for the tricorn. For complex parameter 
spaces, failure of local connectivity was observed by Lavaurs for the 
cubic connectedness locus (a brief remark in his thesis) and by 
Epstein and Yampolsky~\cite{EpsteinYampolsky} for real slices of 
cubic polynomials. Nakane and Komori \cite{NakaneKomori} showed that 
certain ``stretching rays'' in the space of real cubic polynomials do 
not land.

\begin{remark}
This work was inspired by John Milnor in many ways: he was the first
to have observed the tricorn and its relevance in the space of
iterated (real) cubic maps, he made systematic studies about the
local behavior of parameter spaces and under which conditions little
tricorns appear there, he observed the loss of local connectivity and
even of path connectivity of the tricorn, he introduced the term
``tricorn'' --- and his home page shows non-landing rays of the kind
that he observed and that we discuss here.
\end{remark}


\begin{figure}
\Includegraphics{height=57mm,trim=13 0 0 0,clip}{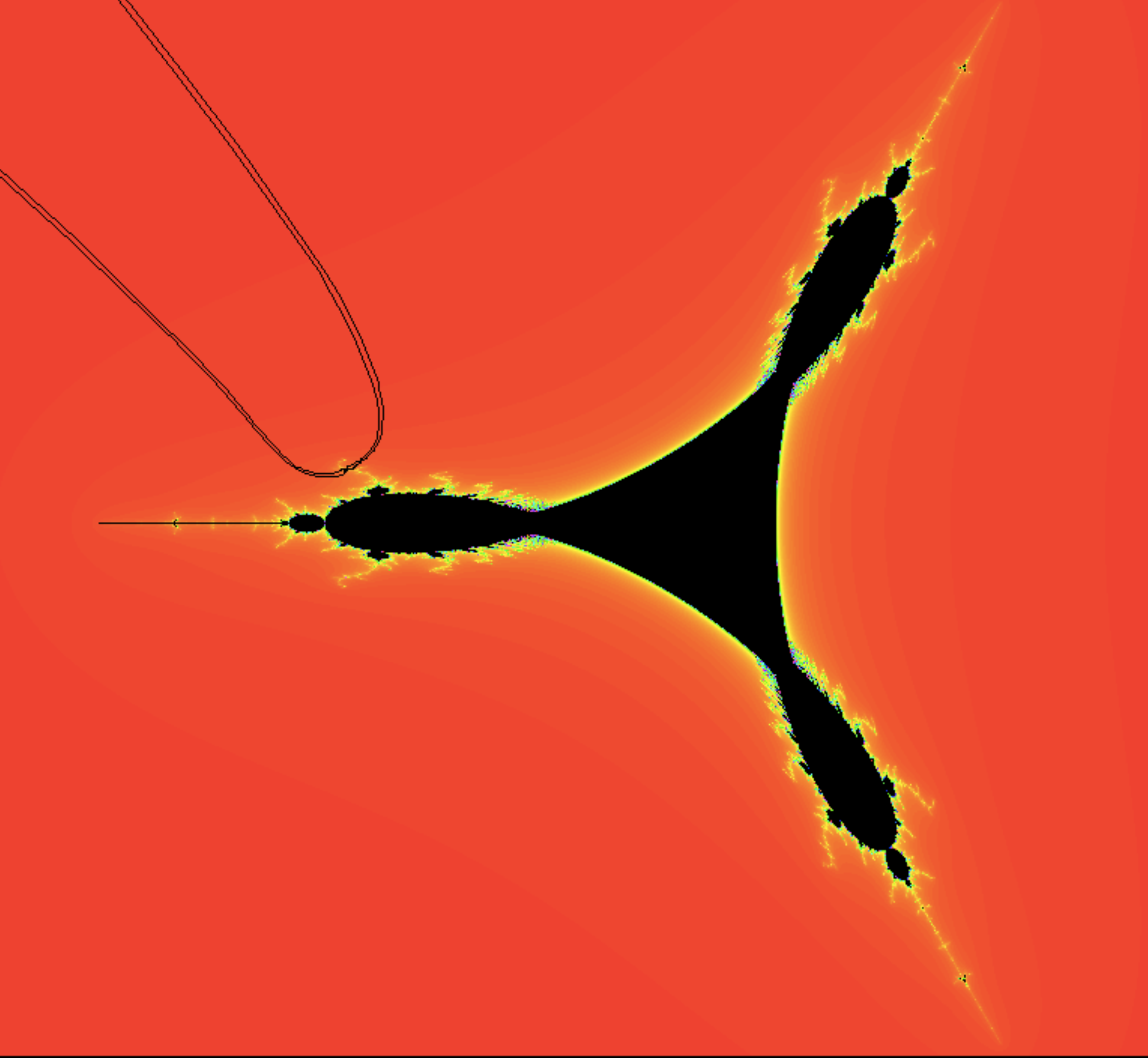}
\Includegraphics{height=57mm,trim=13 0 0 0,clip}{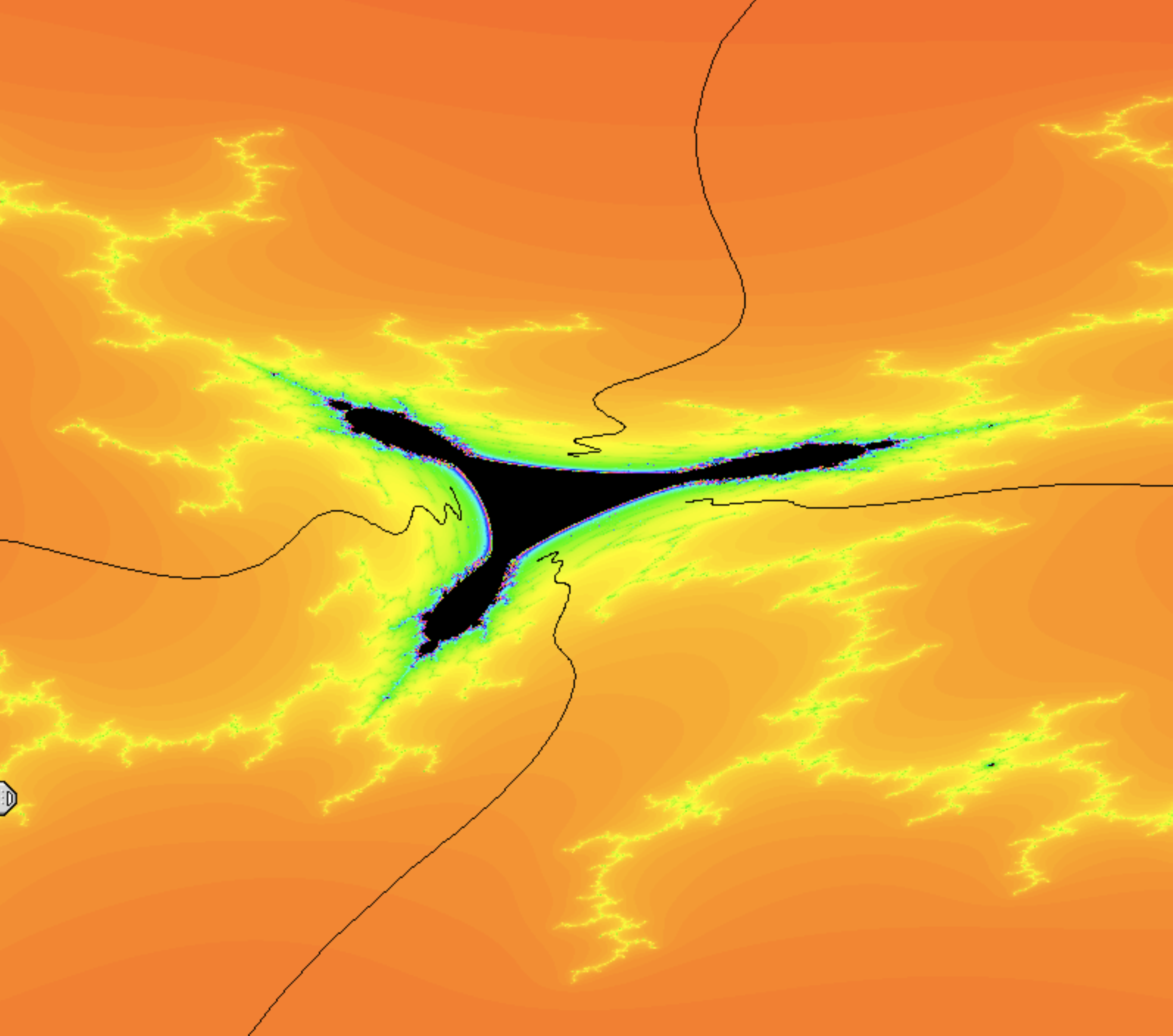}
\caption{The tricorn and a blow-up showing a ``small tricorn'' of 
period $5$. Shown in both pictures are the four parameter rays 
accumulating at the boundary of the period $5$ hyperbolic component 
(at angles $371/1023$, $12/33$, $13/33$, and $1004/1023$). The wiggly 
features of these non-landing rays are clearly visible in the 
blow-up.}
\label{Fig:BigAndLittleTricorn}
\end{figure}

\begin{remark}
In this paper, we need certain background results from the (still 
unpublished) earlier manuscript \cite{Multicorns2} which has more 
detailed results of bifurcations especially at hyperbolic components 
of odd period. In setting up notation and background, it seems more 
convenient to complete these proofs here rather than to strictly 
avoid overlap.
\end{remark}

\emph{Acknowledgements.}
We would like to thank Adam Epstein for many inspiring and helpful discussions on tricorns, parabolic perturbations, and more. We would also like to thank Shizuo Nakane for numerous discussions, many years ago, about antiholomorphic dynamics. We are most grateful to two anonymous referees for numerous detailed and helpful comments. Finally, D.S.\ would like to thank Cornell University for its hospitality, and the German Research Council DFG for its support during the time this work was carried out.

\section{Antiholomorphic and Parabolic Dynamics}
\label{Sec:ParabolicDynamics}

In many ways, antiholomorphic maps have similar dynamical properties as holomorphic ones because the second iterate is holomorphic. There are a number of interesting features specific to antiholomorphic dynamics though, especially near periodic points of odd period $k$. The multiplier of a periodic point of odd period $k$ is not a conformal invariant; only its absolute value is, and the multiplier of the $2k$-th iterate (the second return map) is always non-negative real. This has interesting consequences on boundaries of hyperbolic components of odd period: all boundary parameters are parabolic with multiplier $+1$ (for the holomorphic second return map). 

Another unusual feature is that dynamic rays landing at the same point of odd period $k$ need not all have the same period. These rays can have period $k$ or $2k$ (not higher), and both periods of rays can coexist: see \cite[Lemma~3.1]{Multicorns1}.

We will also show that the number of periodic points of odd period $k$ can change: but of course the number of periodic points of periods $k$ and $2k$, which are both periodic points of period $k$ for the holomorphic second iterate, must remain constant; the only thing that can happen is that two orbits of odd period $k$ turn into one orbit of period $2k$, and this always occurs on boundaries of hyperbolic components of odd period $k$: see Lemma~\ref{Lem:PeriodDoubling} below.

The Straightening Theorem \cite{Polylike} for polynomials has an antiholomorphic analogue. We state it here for easier reference; the proof is the same as in the holomorphic case.
\begin{theorem}[The Antiholomorphic Straightening Theorem]
\label{Thm:Straightening}
Suppose that $U\subset V\subset\C $ are two bounded topological disks so that the closure of $U$ is contained in $V$. Suppose also that $f\colon U\to V$ is an antiholomorphic proper map of degree $d$. Then $f|_U$ is hybrid equivalent to an antiholomorphic polynomial $p$ of the same degree $d$. If the filled-in Julia set of $f\colon U\to V$ (the set of points that can be iterated infinitely often) is connected, then $p$ is unique up to conformal conjugation. 
\qed
\end{theorem}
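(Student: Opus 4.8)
The plan is to carry out the Douady--Hubbard straightening surgery for polynomial-like maps (\cite{Polylike}), substituting the \emph{antiholomorphic} model $z\mapsto\zbar^{\,d}$ near $\infty$ for the holomorphic model $z\mapsto z^{d}$. Write $A:=V\setminus\ovl U$, and let $K_f\subset U$ be the filled-in Julia set of $f$ (the set of points that can be iterated infinitely often within $U$). First I would extend $f$ to a quasiregular self-map $F$ of the Riemann sphere $\widehat\C$ of degree $d$ that agrees with $f$ on a neighborhood of $K_f$, equals $z\mapsto\zbar^{\,d}$ in some chart near $\infty$, and is genuinely antiholomorphic outside a compact annulus contained in $A$. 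Such an $F$ exists because on $\partial U$ the map $f$ is an orientation-reversing $d$-fold covering of $\partial V$, which is exactly the gluing behaviour of $z\mapsto\zbar^{\,d}$ on a round circle; alternatively, one may arrange that $F^{\circ 2}$ is literally the Douady--Hubbard extension of the holomorphic polynomial-like map $f^{\circ 2}\colon f^{-1}(U)\to V$ of degree $d^{2}$.

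Next I would build an $F$-invariant conformal structure $\mu$ on $\widehat\C$. On $K_F=K_f$, and on the neighborhood of $\infty$ where $F$ acts by the antiholomorphic map $z\mapsto\zbar^{\,d}$ --- which \emph{preserves} the standard structure --- I set $\mu:=0$, and elsewhere I propagate $\mu$ by pulling it back under iterates of $F$, using the pullback of conformal structures appropriate for an orientation-reversing map. Exactly as in the holomorphic case, every orbit outside $K_F$ meets the annulus where $F$ fails to be antiholomorphic at most once, so $\mu$ has dilatation bounded by that of $F$ there; hence $\lVert\mu\rVert_\infty<1$, and $\mu$ is $F$-invariant by construction. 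Let $\phi\colon\widehat\C\to\widehat\C$ be the quasiconformal homeomorphism with $\mu_\phi=\mu$ furnished by the Measurable Riemann Mapping Theorem, normalized to fix $\infty$. Since $\mu$ is $F$-invariant and $\phi$ is conformal near $\infty$, the map $p:=\phi\circ F\circ\phi^{-1}$ carries the standard structure to itself while reversing orientation; hence $p$ is antiholomorphic. It is a degree-$d$ self-map of $\widehat\C$ fixing $\infty$ with local degree $d$ there, so $p(z)=\ovl{Q(z)}$ for a polynomial $Q$ of degree $d$; that is, $p$ is an antiholomorphic polynomial of degree $d$. Finally, $\mu\equiv 0$ on $K_f$ shows that $\phi$, restricted to a neighborhood of $K_f$ inside $U$, is a hybrid equivalence between $f|_U$ and $p$.

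For the uniqueness statement, assume $K_f$ is connected and suppose $f$ is hybrid equivalent to two antiholomorphic polynomials $p_1,p_2$ of degree $d$; composing the two equivalences gives a quasiconformal conjugacy $\chi$ between $p_1$ and $p_2$ with $\ovl\partial\chi=0$ almost everywhere on $K_{p_1}$. The holomorphic second iterate $p_i^{\circ 2}$ is asymptotic to a nonzero constant times $z^{d^{2}}$ at $\infty$, so it admits a B\"ottcher coordinate there; this coordinate also conjugates $p_i$ itself to $z\mapsto\zbar^{\,d}$ after normalizing away a root of unity, and since $K_{p_i}$ is connected it extends to a conformal isomorphism from $\widehat\C\setminus K_{p_i}$ onto the complement of the closed unit disc. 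Using these external coordinates and modifying $\chi$ off $K_{p_1}$ exactly as in \cite{Polylike}, one makes $\chi$ conformal almost everywhere outside $K_{p_1}$ as well, hence conformal almost everywhere on $\widehat\C$; then $\chi$ is a M\"obius transformation and, since it fixes $\infty$, an affine one, so $p_1$ and $p_2$ are conformally conjugate.

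The one point requiring more than routine care --- and the step I would watch most closely --- is the bookkeeping of orientation throughout the surgery: the boundary gluing must be recognized as an orientation-reversing covering, the model at $\infty$ must be the antiholomorphic $z\mapsto\zbar^{\,d}$ so that it \emph{preserves} (not merely is holomorphic for) the standard structure, and $\mu$ must be propagated by the orientation-reversing pullback. Once this is set up correctly, the invariance $F^{*}\mu=\mu$, the conclusion that the straightened map $p$ is genuinely antiholomorphic rather than merely quasiregular, and the uniqueness argument all follow word for word from the holomorphic case, which is why the proof can be said to be ``the same''.
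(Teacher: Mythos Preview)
Your proposal is correct and takes exactly the approach the paper has in mind: the paper gives no proof at all beyond the sentence ``the proof is the same as in the holomorphic case'' and a \qed, referring to Douady--Hubbard~\cite{Polylike}. You have supplied precisely that argument, adapted in the expected way (antiholomorphic model $z\mapsto\zbar^{\,d}$ at $\infty$, orientation-reversing pullback of the Beltrami form, B\"ottcher coordinates for the second iterate in the uniqueness step), and your final paragraph correctly isolates the only point where the antiholomorphic bookkeeping requires attention.
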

As usual, two maps are hybrid equivalent if they are quasiconformally conjugation so that the complex dilatation vanishes on the filled-in Julia set.

In the rest of this section, we discuss the local dynamics of parabolic periodic points of odd period $k$ specifically for antipolynomials $p_c(z)=\zbar^d+c$.

\begin{lemma}[Simple and Double Parabolics]
Every parabolic periodic point of $p_c$ of odd period, when viewed as 
a fixed point of an even period iterate of $p_c$, has parabolic 
multiplicity $1$ or $2$.
\end{lemma}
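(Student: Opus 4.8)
The plan is to analyze the local normal form of $p_c$ near a parabolic periodic point of odd period $k$, and show that the antiholomorphic structure forces the parabolic multiplicity (as a fixed point of $p_c^{\circ 2k}$) to be exactly $1$ or $2$. First I would pass to the first return map: let $z_0$ be a parabolic periodic point of odd period $k$, and consider $g = p_c^{\circ k}$, which is antiholomorphic near $z_0$ with $g(z_0)=z_0$. Writing $g$ in a local coordinate centered at $z_0$, we have $g(z) = \overline{\lambda z + O(z^2)}$ where $|\lambda|=1$ is the multiplier. Since we are on the boundary of (or have) a parabolic orbit of odd period, the relevant iterate is $g^{\circ 2}=p_c^{\circ 2k}$, which is holomorphic with $(g^{\circ 2})'(z_0) = |\lambda|^2 = 1$, so $\lambda$ itself lies on the unit circle but need not be a root of unity; however the parabolic condition for $g^{\circ 2}$ forces the relevant structure.

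The key step is the following: because $g$ is antiholomorphic with a fixed point of derivative on the unit circle, one can choose a local conformal coordinate (rotating) so that after the rotation $\lambda$ becomes any convenient unit-modulus value; the crucial point from \cite{Multicorns1} is that for odd-period parabolics one can normalize so that $g$ takes the form $z \mapsto \overline{z} + \overline{z}^{\,q+1} + \dots$ for some $q \ge 1$, i.e.\ $g$ is (conformally conjugate to) an antiholomorphic map whose square $g^{\circ 2}$ has the parabolic normal form $z \mapsto z + (\text{const})\, z^{q'+1}+\dots$. I would then compute $g^{\circ 2}$ directly from the normal form $g(z) = \overline z(1 + a\overline z^{\,q} + \dots)$: composing gives $g^{\circ 2}(z) = \overline{g(z)}\bigl(1 + \overline a\, \overline{g(z)}^{\,q} + \dots\bigr)$, and since $\overline{g(z)} = z(1+\overline a z^q + \dots)$ one obtains $g^{\circ 2}(z) = z\bigl(1 + (a + \overline a) z^q + \dots\bigr) = z + 2\Re(a)\, z^{q+1}+\dots$ to leading order. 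Thus if $\Re(a)\ne 0$ the parabolic multiplicity of $g^{\circ 2}$ is exactly $q$... but this does not yet bound $q$; the point is rather that the antiholomorphic first return map has multiplicity $1$ as an antiholomorphic parabolic (one attracting/repelling petal pair structure coming from $\overline z + \overline z^2$), and I should invoke the classification of antiholomorphic parabolic germs: the simplest case $g(z)=\overline z + \overline z^2 + \dots$ gives $g^{\circ 2}(z) = z + \dots$ with parabolic multiplicity... Let me reconsider — the honest approach is to note $g$ reverses orientation, so $g$ permutes the attracting and repelling petals of $g^{\circ 2}$; the number of attracting petals of $g^{\circ 2}$ equals its parabolic multiplicity $m$, and antiholomorphy combined with the specific unicritical form constrains $m$.

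The cleanest route, and the one I would actually carry out: reduce to the parabolic germ of $g$ itself. An antiholomorphic germ fixing $0$ with $|g'(0)|=1$ that is not linearizable has, after a conformal change of coordinate, the form $g(z) = \overline{z} \exp(\overline{z}^{\,\nu} + \dots)$ or can be brought to $g(z)=\overline z + \overline z^{\nu+1}+\dots$; then $g^{\circ 2}$ is a holomorphic parabolic germ whose multiplicity is either $\nu$ or $2\nu$ depending on whether the leading real part survives. Because $p_c$ is \emph{unicritical} of degree $d$, the periodic point $z_0$ of odd period $k$ attracts at most one critical orbit through $g$, which by the standard Fatou–Leau count of attracting petals (each attracting petal of $g^{\circ 2}$ or each attracting petal-cycle of $g$ must absorb a distinct critical point of the return map) limits $\nu$; in the unicritical setting this forces $\nu = 1$, hence $g^{\circ 2}$ has parabolic multiplicity $1$ or $2$. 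I would write this as: the number of attracting petals of $p_c^{\circ 2k}$ at $z_0$ is bounded by the number of critical points of $p_c^{\circ 2k}$ whose orbits converge to $z_0$; since $p_c$ has a single critical point $0$ and $p_c^{\circ 2k}$ has critical orbit(s) through $0$ and $c = p_c(0)$ (the forward images), at most $2$ attracting petals can be filled, giving multiplicity $\le 2$, and multiplicity $\ge 1$ is automatic from parabolicity. The main obstacle is making the petal-counting argument precise in the antiholomorphic setting: one must correctly account for how the single critical value propagates under the $2k$-fold iterate and rule out that higher-multiplicity germs could arise with fewer critical points than petals — this is exactly where the antiholomorphic feature (that $g$ swaps the two "halves" of the petal picture, cf.\ the Ecalle cylinder discussion) does the work, and where I would lean on \cite{Multicorns1} for the detailed local normal form of odd-period parabolics.
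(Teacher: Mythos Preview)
Your proposal eventually lands on the paper's argument, but only after several false starts, and you seem unsure of the one step that actually does all the work. The normal-form computations in your first two paragraphs (expanding $g^{\circ 2}$ from $g(z)=\overline z + a\overline z^{q+1}+\dots$, then invoking a classification of antiholomorphic parabolic germs) do not bound $q$ and are not needed. The paper's proof is exactly your final sentence stripped of hesitation: write the holomorphic second return map $p_c^{\circ 2k}$ locally as $z\mapsto z+z^{q+1}+\dots$, note it has $q$ attracting petals, and each petal must absorb an infinite critical orbit of the holomorphic map $p_c^{\circ 2}$; since the single critical orbit of $p_c$ splits into exactly two critical orbits of $p_c^{\circ 2}$ (even and odd iterates), $q\le 2$.

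The ``obstacle'' you flag at the end is not one. The petal-counting is done entirely for the \emph{holomorphic} map $p_c^{\circ 2}$ (or $p_c^{\circ 2k}$), so the standard Fatou--Leau theory applies verbatim and there is nothing antiholomorphic to make precise. The antiholomorphic structure enters only to force the multiplier of $p_c^{\circ 2k}$ to be $+1$ (which you already have) and, separately, to explain why there are two critical orbits rather than one for the holomorphic iterate. You do not need \cite{Multicorns1} or any special normal form for antiholomorphic germs; the argument is two lines once you commit to working with $p_c^{\circ 2}$.
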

\begin{proof}
The first return map of any parabolic periodic point of odd period is 
antiholomorphic, but the second iterate of the first return map is 
holomorphic and has multiplier $+1$. 
This second iterate can thus be written in local coordinates as 
$z\mapsto z+z^{q+1}+\dots$, where $q\ge 1$ is  the 
multiplicity of the parabolic orbit. There are then $q$ attracting Fatou 
petals, and each must absorb an infinite critical orbit of 
$p_c^{\circ 2}$. But $p_c^{\circ 2}$ has two critical orbits (the 
single critical orbit of $p_c$ splits up into two orbits of 
$p_c^{\circ 2}$, for even and odd iterates), hence $q\le 
2$. (Viewing this periodic point as a fixed point of a higher iterate 
of $p_c^{\circ 2}$ does not change $q$: in the same local coordinates 
as before, the higher iterate takes the form $z\mapsto 
z+az^{q+1}+\dots$, where $a\in\N$ measures which higher iterate we 
are considering.)
\end{proof}

A parabolic periodic point with multiplicity $1$ (resp.\ $2$) is called a
\emph{simple (resp.\ double) parabolic point}.
A parameter $c$ so that $p_c$ has a double parabolic periodic point 
is called a \emph{parabolic cusp}.

\begin{lemma}[Ecalle cylinders]
\label{LemEcalleCylinders}
Let $z_0$ be a simple parabolic periodic point of odd period $k$ of an
antiholomorphic map $f$ and let $V$ be the attracting basin of $z_0$.
Then there is a neighborhood $U$ of $z_0$ and an analytic map 
$\phi:U\cap V \to \C$  that is an isomorphism to the half-plane $\Re 
w>0$ such that
\[
\phi\circ  f^{\circ k}\circ \phi^{-1}(w)=\overline w+1/2.
\]
The map $\phi$ is unique up to an additive real constant.
\end{lemma}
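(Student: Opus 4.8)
The plan is to reduce to the well-known holomorphic theory of parabolic fixed points applied to the holomorphic second return map $g := f^{\circ 2k}$, and then to use the antiholomorphic square root $f^{\circ k}$ to upgrade the holomorphic Fatou coordinate to the asserted antiholomorphic normal form. First I would put $z_0$ at the origin and consider $g = f^{\circ 2k}$, which by the proof of the previous lemma has the local form $g(z) = z + z^{2} + \dots$ (after a linear change of coordinate, since $z_0$ is \emph{simple} parabolic, $q=1$; note we may absorb the coefficient of $z^{2}$). Classical Leau--Fatou theory gives an attracting petal $P \subset U \cap V$ and an injective holomorphic Fatou coordinate $\Psi : P \to \C$ with $\Psi(g(z)) = \Psi(z) + 1$, whose image contains a right half-plane; after shrinking $P$ (and adjusting $U$) we may take $\Psi$ to be an isomorphism onto $\{\Re w > 0\}$. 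The Fatou coordinate is unique up to an additive complex constant.

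The key step is to understand how the antiholomorphic map $h := f^{\circ k}$ acts in the coordinate $\Psi$. Since $g = h \circ h$, the map $h$ is an antiholomorphic self-map of the basin with $h(P) $ again a petal at $0$; shrinking once more we may assume $h : P \to P'$ with $P' \subset P$ and $g = h^{\circ 2}$ on $P$. Set $\sigma := \Psi \circ h \circ \Psi^{-1}$, an antiholomorphic injection defined on a right half-plane, i.e.\ $\sigma(w) = \tau(\overline w)$ with $\tau$ holomorphic. From $g = h^{\circ 2}$ we get $\sigma(\sigma(w)) = w + 1$, hence $\tau(\overline{\tau(\overline w)}) = w + 1$; writing $u = \overline w$ this says $\overline{\tau(\overline{\tau(u)})} = \overline u + 1$, i.e.\ $\tau \circ \overline{\tau} = (\cdot) + 1$ appropriately interpreted. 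A short computation with asymptotics at $+\infty$ (using that $\sigma$ commutes with $g$-translation up to the relation above, so $\sigma(w+1) = \sigma(w) + 1$, forcing $\tau(\overline w + 1) = \tau(\overline w)+1$) shows $\tau(\overline w) = \overline w + \beta$ for a constant $\beta$, and then $\sigma(\sigma(w)) = w+1$ forces $\beta + \overline{\beta} = 1$, i.e.\ $\Re \beta = 1/2$. Replacing $\Psi$ by $\Psi + i\,\Im\beta$ (an imaginary translation, still holomorphic, still conjugating $g$ to $w \mapsto w+1$) normalizes $\beta = 1/2$, giving $\phi \circ f^{\circ k} \circ \phi^{-1}(w) = \overline w + 1/2$ with $\phi := \Psi + i\,\Im\beta$, restricted and corestricted so that it is an isomorphism $U \cap V \to \{\Re w > 0\}$.

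For uniqueness: any two coordinates $\phi_1, \phi_2$ with this property differ by an isomorphism $A$ of $\{\Re w > 0\}$ commuting with $w \mapsto \overline w + 1/2$; composing the relation with itself, $A$ commutes with $w \mapsto w+1$, so $A(w) = w + \gamma$ for some constant $\gamma$, and commuting with $w \mapsto \overline w + 1/2$ forces $\gamma = \overline\gamma$, i.e.\ $\gamma \in \R$. Hence $\phi$ is unique up to a real additive constant, as claimed.

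The main obstacle I anticipate is the step extracting the antiholomorphic ``square root'' $h$ of $g$ and controlling it on a petal: one must be careful that $h$ genuinely maps a sub-petal into the petal on which the Fatou coordinate is defined (so that $\sigma$ is well-defined on a half-plane), and that the functional equation $\sigma \circ \sigma(w) = w+1$ together with the (anti)holomorphic-translation symmetry really pins $\sigma$ down to an affine antiholomorphic map — this is the place where the antiholomorphic structure does real work, since for a general holomorphic $g$ with parabolic fixed point there is no distinguished square root, but here $h = f^{\circ k}$ is given and the rigidity of conjugacies of $w \mapsto w+1$ on a half-plane (only affine translations commute) does the rest. Everything else is a routine transcription of Leau--Fatou theory.
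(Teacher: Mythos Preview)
Your argument follows the same strategy as the paper's: build the Fatou coordinate $\Psi$ for the holomorphic second return map $g=f^{\circ 2k}$, analyze the induced action $\sigma=\Psi\circ h\circ\Psi^{-1}$ of $h=f^{\circ k}$, deduce $\sigma(w)=\overline w+\beta$ with $\Re\beta=1/2$, and normalize $\Im\beta$ away by a purely imaginary shift of $\Psi$. The uniqueness argument is also the same.

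The one step that is not quite right is the justification that $\sigma$ is affine. Your parenthetical claim that on a half-plane ``only affine translations commute'' with $w\mapsto w+1$ is false as stated (e.g.\ $w\mapsto w+\varepsilon e^{2\pi i w}$ commutes with translation by~$1$ and is injective on suitable domains), and an asymptotic estimate at the single end $\Re w\to+\infty$ does not by itself force constancy of the periodic function $\tau(u)-u$. The paper handles this step more cleanly by passing to the quotient: since $h$ commutes with $g$, it descends to an antiholomorphic self-map of the Ecalle cylinder $(V\cap U)/f^{\circ 2k}\cong\C/\Z$, and since $\sigma\circ\sigma$ is translation by~$1$ (the identity on the cylinder), this self-map is an \emph{automorphism} of $\C/\Z$. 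The antiholomorphic automorphisms of $\C/\Z$ are exactly $[w]\mapsto[\pm\overline w+\alpha']$; lifting to the half-plane gives $\sigma(w)=\pm\overline w+\alpha$, and squaring to get $w\mapsto w+1$ rules out the minus sign and forces $\Re\alpha=1/2$. Replacing your half-plane rigidity claim with this quotient argument closes the gap with no extra work.
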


It follows that the quotient  of $V\cap U$ by $f^{\circ 2k}$ is 
isomorphic to $\C/\Z$, and on this quotient cylinder $f$ induces the 
map $x+iy\mapsto x+1/2-iy$ with $x\in \R/\Z,\ y\in \R$.

\begin{proof}
The second iterate $f^{\circ 2}$ is holomorphic, and for this map 
$z_0$ is parabolic with period $k$. Since the parabolic point is 
simple, we have the usual conformal Fatou coordinates  $\phi\colon 
V\cap U\to\C$ with $\phi\circ f^{\circ 2k}\circ\phi^{-1}(w)=w+1$ for 
a certain neighborhood $U$ of $z_0$, where $\phi(V\cap U)$ covers 
some right half plane and $\phi$ is unique up to addition of a 
complex constant. Adjusting this constant and restricting $U$ (which will no longer be a neighborhood of $z_0$), we may 
assume that $\phi(V\cap U)$ is the right half plane $\Re w>0$. It 
follows that $(V\cap U)/(f^{\circ 2k})$ is conformally isomorphic to 
the bi-infinite $\C/\Z\simeq\Cstar$, so that $f^{\circ k}$ has to 
send this cylinder to itself in an antiholomorphic way. The only 
antiholomorphic automorphisms of $\C/\Z$ are $w\mapsto \pm \ovl 
w+\alpha'$ with $\alpha'\in\C/\Z$ (depending on the sign, the two ends 
of $\C/\Z$ are either fixed or interchanged), and lifting this to the 
right half plane we get $\phi\circ f^{\circ k}\circ \phi^{-1}(w)=\pm 
\ovl w+\alpha$ with $\alpha\in\C$, hence $\phi\circ f^{\circ 
2k}\circ\phi^{-1}(w)=w+\pm\ovl\alpha+\alpha\stackrel{!}{=}w+1$, so 
either $2\Re\alpha=1$ or $2i\Im\alpha=1$. The latter case is 
impossible, and in the former case we get $\Re\alpha=1/2$ as claimed. 
But $\phi$ is still unique up to addition of a complex constant, and 
the imaginary part of this constant can be adjusted uniquely so that 
$\alpha$ becomes real, i.e., $\alpha=1/2$.
\end{proof}

\begin{definition}[Ecalle cylinder, Ecalle height, and equator]
The quotient cylinder $(V\cap U)/(f^{\circ 2k})$  isomorphic to
$\C/\Z$ is called the \emph{Ecalle cylinder} of the attracting basin.
Its \emph{equator} is the unique simple closed (Euclidean) geodesic
of $\C/\Z$ that is fixed by the action of $f$: in those coordinates
in which $f$ takes the form $w\mapsto \ovl w+1/2$, this equator is the
projection of $\R$ to the quotient. Finally, the \emph{Ecalle height} 
of a point $w\in\C/\Z$ in the Ecalle cylinder is defined as $\Im w$.

Similarly to the Ecalle cylinders in the attracting basin, one can
also define them for a local branch of $f^{-1}$ fixing $z_0$; all 
this requires is the local parabolic dynamics in a neighborhood of 
$z_0$. To distinguish
these, they are called \emph{incoming} and \emph{outgoing} Ecalle
cylinders (for $f$ and $f^{-1}$, respectively). Both have equators 
and Ecalle heights.
\end{definition}

Note that the identification of an Ecalle cylinder with $\C/\Z$ for 
usual holomorphic maps is  unique only up to translation by a complex 
constant; in our case with an antiholomorphic intermediate iterate 
and thus a
preferred equator, this identification is unique up to a real
constant. Therefore, there is no intrinsic meaning of $\Re w$ within 
the cylinder, or for $\Re\phi(z)$ for $z\in V\cap U$.
However, for two points $z,z'\in V\cap U$, the difference
$\Re \phi(z)-\Re \phi(z')$ has a well-defined meaning in $\R$ called
\emph{phase difference}; this notion actually extends to the entire 
attracting basin $V$.

\begin{proposition}[Parabolic Arcs]
\label{Prop:ParabolicArc}
Every polynomial  $p_c$ with a simple parabolic periodic point of odd
period is part of a real $1$-dimensional family of parabolic maps
$p_{c(h)}$ with simple parabolic orbits. This family is real
analytically parametrized by Ecalle height $h$ of the critical value; 
more precisely, the map $h\mapsto p_{c(h)}$ is a real-analytic 
bijection from $\R$ onto a family of parabolic maps that we call a 
\emph{parabolic arc}.
\end{proposition}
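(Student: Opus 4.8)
The plan is to realize every Ecalle height by a quasiconformal surgery inside the incoming Ecalle cylinder, and then to straighten the result back into the family $p_c$. For a parameter $c$ such that $p_c$ has a simple parabolic orbit of period $k$, let $h(c)\in\R$ denote the Ecalle height of the critical value $c$; this is well defined by Lemma~\ref{LemEcalleCylinders}, since the incoming Ecalle cylinder is canonically isomorphic to $\C/\Z$ up to a real translation and real translations preserve imaginary parts. I will also use the standard fact that the incoming Fatou coordinate of a simple parabolic germ of a fixed period, and hence $h(c)$, depends real-analytically on the map wherever the parabolic orbit persists.

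Fix $c_0$ with $p_{c_0}$ having a simple parabolic orbit $z_0$ of odd period $k$, and set $h_0=h(c_0)$. For each $h\in\R$, one builds a quasiregular map $\tilde f_h\colon\C\to\C$ by modifying $p_{c_0}$ only on a single fundamental crescent $S$ of the incoming Ecalle cylinder that the critical orbit passes through: on $S$ one post-composes $p_{c_0}$ with a quasiconformal homeomorphism $\psi_h$ that is the identity on the boundary of $p_{c_0}(S)$ and whose net effect is to shift the Ecalle height of the transiting critical value by $h-h_0$. Since the Ecalle cylinder is bi-infinite, $\psi_h$ can be chosen as a ``vertical shear'' spread over a sub-strip of height comparable to $|h-h_0|$, so its dilatation is bounded independently of $h$. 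The crescent $S$ sits at finite Fatou coordinate, hence away from $z_0$, and also away from the critical point $0$; so $\tilde f_h=p_{c_0}$ near the parabolic orbit and near $0$, and therefore $\tilde f_h$ is a quasiregular degree-$d$ map with a single critical point of multiplicity $d-1$, with the parabolic orbit unchanged (still simple, still of odd period $k$) and the critical orbit still inside the parabolic basin, so that its filled Julia set is connected. Spreading the dilatation of $\psi_h$ over the $p_{c_0}$-backward orbit of $S$ and taking the standard structure elsewhere produces a $\tilde f_h$-invariant almost-complex structure of bounded dilatation; integrating it by the measurable Riemann mapping theorem gives a quasiconformal $\Phi_h$ conjugating $\tilde f_h$ to a genuine antiholomorphic map $g_h=\Phi_h\circ\tilde f_h\circ\Phi_h^{-1}$. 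Being a globally defined degree-$d$ antiholomorphic polynomial with a single critical point of multiplicity $d-1$, $g_h$ is affinely conjugate to some $p_{c(h)}$; normalizing $\Phi_h$ to depend continuously on $h$ with $c(h_0)=c_0$ removes the residual ambiguity by a $(d+1)$-st root of unity and makes $c(h)$ well defined, and the connectedness of the filled Julia set identifies $p_{c(h)}$ as the straightening of $\tilde f_h$ in the sense of Theorem~\ref{Thm:Straightening}. Since the straightening is conformal near the unmodified parabolic orbit, $p_{c(h)}$ is again simple parabolic of period $k$, and by construction its critical value has Ecalle height $h$, i.e.\ $h(c(h))=h$. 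Finally $\psi_h$, hence the invariant Beltrami form, depends real-analytically (indeed affinely) on $h$, so by the Ahlfors--Bers theorem with parameters $h\mapsto c(h)$, and thus $h\mapsto p_{c(h)}$, is real-analytic.

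It remains to read off the asserted properties from $h(c(h))=h$. That identity forces $h\mapsto c(h)$ to be injective, as the Ecalle height of the critical value is intrinsic to the map; hence $h\mapsto p_{c(h)}$ is a real-analytic bijection from $\R$ onto a family of simple parabolic maps of odd period $k$, parametrized precisely by the Ecalle height of the critical value and containing $p_{c_0}=p_{c(h_0)}$: this family is the parabolic arc. Differentiating $h(c(h))=h$ and using real-analyticity of $h(\cdot)$ along the parabolic locus gives $c'(h)\neq 0$, so $\{c(h):h\in\R\}$ is a real-analytic one-dimensional submanifold of the $c$-plane; it is homeomorphic to $\R$, since an accumulation $c(h_n)\to c(h^*)$ with $h_n\to\infty$ cannot occur (continuity of $h(\cdot)$ near $c(h^*)$ would keep the values $h_n=h(c(h_n))$ bounded). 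Its closure in $\C$ adds, at the two ends, parabolic cusps, near which $|h|\to\infty$.

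The technical heart is the surgery: performing the Ecalle-height shift with dilatation bounded uniformly in $h$ and with real-analytic dependence on $h$, and --- most importantly --- verifying that straightening returns a map inside the one-parameter family, i.e.\ that the surgered map stays unicritical of degree $d$ with connected Julia set, so that it has a well-defined straightening $c(h)$. (One could additionally establish the companion rigidity statement --- that the Ecalle height of the critical value is a complete conformal invariant among simple parabolic $p_c$ of a fixed odd period, provable by matching Fatou coordinates on the parabolic basin with the B\"ottcher coordinate at $\infty$ and invoking the absence of an invariant line field on the Julia set --- which would further show that the arc just constructed is the whole parabolic locus near $c_0$, hence unique; this is not needed for the statement itself.)
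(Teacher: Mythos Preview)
Your approach is essentially the same as the paper's: the paper only sketches the argument via Figure~\ref{Fig:ChangeEcalleHeight} and defers details to \cite{Multicorns2}, but the underlying idea in both is a quasiconformal deformation of the complex structure on the incoming Ecalle cylinder that shifts the imaginary part of the critical value, pulled back to the full parabolic basin and then straightened back into the family $p_c$. Your formulation via post-composition on a single fundamental crescent is equivalent to the paper's description of placing a new (sheared) complex structure on the cylinder; your observation that the shear can be supported on a strip of height comparable to $|h-h_0|$, giving dilatation bounded independently of $h$, matches the uniform bounds needed for real-analytic dependence via Ahlfors--Bers with parameters. The additional arguments you supply --- injectivity from $h(c(h))=h$, non-degeneracy $c'(h)\neq 0$, and the remark on the rigidity companion --- go somewhat beyond the paper's sketch but are consistent with it.
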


We sketch the proof in Figure~\ref{Fig:ChangeEcalleHeight}; see 
\cite[Theorem~3.2]{Multicorns2} for details.

\begin{figure}[h]
\Includegraphics{width=.7\textwidth}{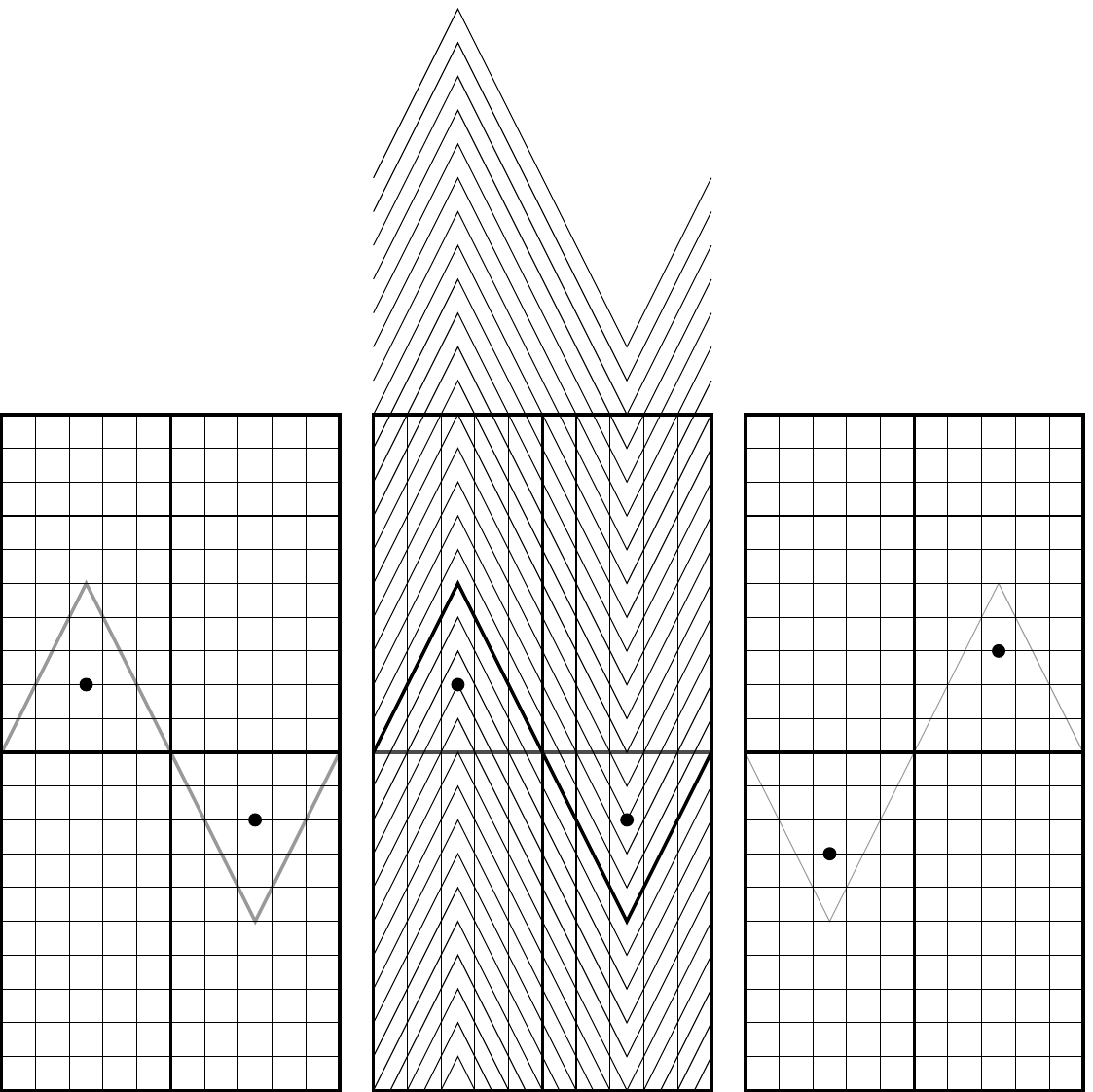}
\caption{The Ecalle height of the critical value can be changed by 
putting a different complex structure onto the Ecalle cylinder, and 
then by pull-backs onto the entire parabolic basin. Left: the 
critical orbit (marked by heavy dots) in the Ecalle cylinder, with a 
square grid indicating the complex structure; the equator is 
highlighted, and the critical value has Ecalle height $0.2$. The grey 
zig-zag line will be the new equator; it is invariant under $z\mapsto 
\zbar+1/2$.
Center: a grid of ``distorted squares'' defines a new complex 
structure (in which each parallelogram should become a rectangle); 
the dynamics is the same as before, and the new equator is 
highlighted. Right: the new complex structure in the Ecalle cylinder 
after straightening; the Ecalle height of the critical value is now 
$-0.3$. The image of the old equator is indicated in grey. }
\label{Fig:ChangeEcalleHeight}
\end{figure}

\section{Bifurcation Along Arcs and the Fixed Point Index}
\label{Sec:FixedPointIndex}


\begin{lemma}[Parabolic Arcs on Boundary of Odd Period Components]
\label{Lem:ParabolicArcLimits}
Near both ends, every limit point of every parabolic arc is a parabolic cusp.
\end{lemma}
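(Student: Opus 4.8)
The plan is to argue by contradiction: suppose the parabolic arc $\{p_{c(h)}\}$, parametrized by Ecalle height $h$ of the critical value (Proposition~\ref{Prop:ParabolicArc}), has a limit point $c^* = \lim_{h\to+\infty} c(h)$ (the case $h\to-\infty$ is symmetric) at which $p_{c^*}$ has only a \emph{simple} parabolic periodic point of odd period $k$, rather than a double one (a cusp). The first step is to show that $c^*$ itself lies on a parabolic arc of the same period: since having a simple parabolic orbit of period $k$ is the hypothesis of Proposition~\ref{Prop:ParabolicArc}, the point $c^*$ is part of a parabolic arc $\mathcal A^*$, which is again a real-analytic bijection from $\R$ given by Ecalle height. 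I would then show $\mathcal A^*$ and the original arc $\mathcal A$ must in fact be the \emph{same} arc, using that parabolic parameters of a fixed odd period $k$ with simple parabolic orbit form a one-dimensional real-analytic set whose connected components are precisely the parabolic arcs; since $c^*$ is a limit of points of $\mathcal A$, it lies in the closure of that component, and simplicity at $c^*$ forces $c^* \in \mathcal A^*$ with $\mathcal A = \mathcal A^*$. But then $c^*$ is an \emph{interior} point (a value $c(h_0)$ for finite $h_0$) of the arc it terminates, which is absurd because $h\mapsto c(h)$ is injective and $c^*$ is approached as $h\to\infty$, not as $h\to h_0$.

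The clean way to package this is via the Ecalle height itself. As $h\to+\infty$ along $\mathcal A$, if $p_{c(h)} \to p_{c^*}$ with $p_{c^*}$ simple parabolic, then in a neighborhood of $c^*$ the Fatou coordinate depends continuously (indeed real-analytically) on the parameter, so the Ecalle height of the critical value of $p_{c(h)}$ would converge to the Ecalle height of the critical value of $p_{c^*}$, a finite number. This contradicts $h\to+\infty$. Hence no simple parabolic parameter can be such a limit point, and by the classification of parabolic multiplicities in the ``Simple and Double Parabolics'' lemma, the only remaining possibility is that the limit point is a double parabolic point, i.e.\ a parabolic cusp.

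Two points need care. First, I must ensure the limit point $c^*$ actually has a parabolic periodic point (of period $k$ or a proper divisor, which for odd $k$ behaves the same way): this follows because the parabolic condition --- that the relevant iterate of $p_c$ has a fixed point with multiplier $+1$ --- is a closed condition on $c$, so a limit of parabolic parameters of period $k$ is parabolic of period dividing $k$. Second --- and this is the main obstacle --- I must control the behavior of the Fatou coordinate as $c \to c^*$ through the simple-parabolic regime: the continuous dependence of Fatou coordinates on parameters is standard when the parabolic point stays simple (Écalle–Voronin theory, or Douady–Hubbard-style estimates), but one must verify that along our arc the parabolic orbit does \emph{not} degenerate to a cusp before reaching $c^*$, which is exactly what is being assumed in the contradiction hypothesis. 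So the argument reduces to: simple-parabolic-ness is an open condition \emph{along the set of parabolic parameters}, the Ecalle height is a real-analytic coordinate on each such open piece, and a real-analytic bijection onto $\R$ cannot have a limit point of the domain mapping to a finite height value. I expect the technical heart to be citing the right form of continuity of Fatou coordinates from \cite{Multicorns2}; everything else is soft topology.
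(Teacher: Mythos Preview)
Your proposal is correct and your second paragraph is essentially verbatim the paper's (three-line) proof: a limit of period-$k$ parabolic parameters is again period-$k$ parabolic; at a simple parabolic the Ecalle height of the critical value is a finite number depending continuously on the parameter; but along the arc $h\to\pm\infty$, contradiction; hence the limit is a double parabolic, i.e.\ a cusp. Your first paragraph's route through ``the limit arc $\mathcal A^*$ must coincide with $\mathcal A$'' is a more roundabout packaging of the same idea and is superseded by the direct Ecalle-height argument you give next, so you can drop it.
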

\begin{proof}
Each limit point of parabolic parameters of period $k$ must be 
parabolic of period $k$, so it could be a simple or double parabolic. 
But at simple parabolics, Ecalle height is finite, while it tends to 
$\infty$ at the ends of parabolic arcs. Therefore, each limit point 
of a parabolic arc is a parabolic cusp.
\end{proof}

\begin{remark}
In fact, the number of parabolic cusps of any given (odd) period is 
finite \cite[Lemma~2.10]{Multicorns2}, so each parabolic arc has two 
well-defined endpoints.

As the parameter tends to the end of a parabolic arc, the Ecalle 
height tends to $\pm\infty$, and the Ecalle cylinders (with first 
return map of period $k$ which permutes the two ends) becomes 
pinched; in the limit, the cylinder breaks up into two cylinders that 
are interchanged by the $k$-th iterate, so each cylinder has a return 
map of period $2k$, which is holomorphic: the double parabolic 
dynamics in the limit is rigid and has no non-trivial deformations.
\end{remark}


In the sequel, we will need the \emph{holomorphic fixed point index}: 
if $f$ is a local holomorphic map with a fixed point $z_0$, then the 
index  $\iota(z_0)$ is defined as the residue of $\frac{1}{z-f(z)}$ at 
$z_0$. If the multiplier $\rho=f'(z_0)$ is different from $1$, this index 
equals $\frac{1}{1-\rho}$ and tends to $\infty$ as $\rho\to 1$. The 
most interesting situation occurs if several simple fixed points 
merge into one parabolic point. Each of their indices tends to 
$\infty$, but the sum of the indices tends to the index of the 
resulting parabolic fixed point, which is finite. Of course, 
analogous properties apply for the first return map of a periodic 
point.

If $z_0$ is a parabolic fixed point with multiplier $1$, then in local 
holomorphic coordinates the map can be written as 
$f(w)=w+w^{q+1}+\alpha w^{2q+1}+\dots$, and $\alpha$ is a conformal 
invariant (in fact, it is the unique formal invariant other than $q$: there is a formal, not necessarily convergent, power series that formally conjugates $f$ to its first three terms). A simple calculation shows that $\alpha$ equals the 
parabolic fixed point index. The quantity $1-\alpha$ is known as 
``r\'esidu it\'eratif'' \cite{BE}; its real part measures whether or 
not the parabolic fixed point of $f$ in the given normal form can be 
perturbed into $q$ or $q+1$ attracting fixed points; Epstein 
introduced the notion ``parabolic repelling'' and ``parabolic 
attracting'' for these two situations, and in the latter case he 
obtains an extra count in his refined Fatou-Shishikura-inequality 
\cite{AdamFSI}. We will use these ideas in 
Theorems~\ref{Thm:BifurcationArcs} and 
\ref{Thm:EcalleHeightZeroNoBifurcation} below.


\begin{lemma}[Types of Perturbation of Odd Period Parabolic Orbit]
\label{Lem:TypesPerturbation}
Suppose $p_{c_0}$ has a simple parabolic periodic point $z_0$ of odd period $k$. Then for any sequence $c_n\to c_0$ of parameters with $c_n\neq c_0$, the maps $p_{c_n}$ have periodic points $z_n$ and $z'_n$ that both converge to $z_0$ as $c_n\to c_0$ and with multipliers $\rho_n:=(p_{c_n}^{\circ 2k})'(z_n)\to 1$ and $\rho'_n:=(p_{c_n}^{\circ 2k})'(z'_n)\to 1$, such that for large $n$ either

\begin{itemize}
\item
both $z_n$ and $z'_n$ have period $k$, we have $\rho_n,\rho'_n\in\R$, and one of the orbits is attracting, while the other one is repelling; or
\item
the points $z_n=z'_n$ are on a parabolic orbit of period $k$; or
\item
the points $z_n$ and $z'_n$ both have period $2k$, they are on the same orbit of $p_{c_n}$, and they satisfy $p_{c_n}^{\circ k}(z_n)=z'_n$ and $p_{c_n}^{\circ k}(z'_n)=z_n$. Their multipliers satisfy $\rho'_n=\ovl{\rho_n}\not\in\R$ and $\Re(\rho_n-1)=O(\Im(\rho_n)^2)$.
\end{itemize} 
\end{lemma}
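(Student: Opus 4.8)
The plan is to work entirely in a small neighborhood $U$ of $z_0$ and to exploit that $z_0$ is a \emph{simple} parabolic fixed point of the holomorphic second return map $p_{c_0}^{\circ 2k}$: as such it is a zero of $z-p_{c_0}^{\circ 2k}(z)$ of order $q+1=2$, so by the argument principle $p_c^{\circ 2k}$ has exactly two fixed points (with multiplicity) in $U$ for every $c$ near $c_0$. Call them $z_n$ and $z_n'$ (for $c=c_n$); both tend to $z_0$, and since the multiplier depends continuously on $(c,z)$ we get $\rho_n,\rho_n'\to (p_{c_0}^{\circ 2k})'(z_0)=1$. These are the points the lemma asks for, and everything else is determined by how the first return map acts on the pair $\{z_n,z_n'\}$.

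Since the antiholomorphic first return map $p_{c_0}^{\circ k}$ is a local diffeomorphism near $z_0$ (its antiholomorphic multiplier $a_0$ satisfies $|a_0|^2=(p_{c_0}^{\circ 2k})'(z_0)=1\neq 0$), for $c_n$ close to $c_0$ the map $p_{c_n}^{\circ k}$ is a local diffeomorphism carrying a neighborhood of $z_0$ into $U$, and it commutes with $p_{c_n}^{\circ 2k}$, hence permutes $\{z_n,z_n'\}$. This produces the trichotomy. If $z_n=z_n'$, that common point is a double fixed point of $p_{c_n}^{\circ 2k}$, hence has multiplier $1$ and is parabolic; being fixed by $p_{c_n}^{\circ k}$ and converging to the exact-period-$k$ point $z_0$, it has exact period $k$ — this is the middle alternative. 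Otherwise $z_n\neq z_n'$ and the permutation is either the identity or the transposition.

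In the identity case I would write $p_{c_n}^{\circ k}(z_n+h)=z_n+a_n\bar h+o(h)$ and compose to get $\rho_n=|a_n|^2\in\R_{\ge 0}$, likewise $\rho_n'\in\R_{\ge 0}$; neither can equal $1$, since a multiplier-$1$ fixed point would be a double zero of $z-p_{c_n}^{\circ 2k}(z)$ and, together with the other fixed point, force a third fixed point in $U$. Now the key input is the fixed-point-index sum: $\frac{1}{1-\rho_n}+\frac{1}{1-\rho_n'}$ equals $\frac{1}{2\pi i}\oint \frac{dz}{z-p_{c_n}^{\circ 2k}(z)}$ over a fixed small loop around $z_0$, which converges to the finite parabolic index of $z_0$. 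If both $\rho_n,\rho_n'<1$ this sum $\to+\infty$, and if both are $>1$ it $\to-\infty$; hence for large $n$ one orbit is attracting ($\rho_n<1$) and the other repelling.

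The transposition case is where the real bookkeeping lives, and I expect it to be the main obstacle. Here $z_n'=p_{c_n}^{\circ k}(z_n)$ and $z_n=p_{c_n}^{\circ k}(z_n')$, so $\{z_n,z_n'\}$ lies on a single $p_{c_n}$-orbit whose period divides $2k$ but not $k$; since $k$ is odd, a proper such divisor $2d$ with $d\mid k$ would make $z_0=\lim z_n$ periodic of period dividing $2d<2k$ — impossible for an exact-period-$k$ point with $k$ odd — so the period is exactly $2k$. Writing $p_{c_n}^{\circ k}(z_n+h)=z_n'+a_n\bar h+o(h)$ and $p_{c_n}^{\circ k}(z_n'+h)=z_n+a_n'\bar h+o(h)$ and composing gives $\rho_n=a_n'\overline{a_n}$ and $\rho_n'=a_n\overline{a_n'}=\overline{\rho_n}$. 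The index sum now reads $\frac{1}{1-\rho_n}+\frac{1}{1-\overline{\rho_n}}=\frac{-2\,\Re(\rho_n-1)}{|1-\rho_n|^2}$, and its convergence to a finite limit forces $\rho_n\notin\R$ (if $\rho_n$ were real this would be $\frac{2}{1-\rho_n}\to\pm\infty$, and $\rho_n=1$ is again excluded by the multiplicity count). Finally, boundedness of $\frac{\Re(\rho_n-1)}{|1-\rho_n|^2}$ together with $|1-\rho_n|^2=\Re(\rho_n-1)^2+\Im(\rho_n)^2\to 0$ yields $|\Re(\rho_n-1)|\le C\,\Im(\rho_n)^2$ for large $n$, i.e. $\Re(\rho_n-1)=O(\Im(\rho_n)^2)$, which completes the argument; the only delicate point throughout is making these antiholomorphic multiplier identities and the index estimate precise, the rest being a clean case split.
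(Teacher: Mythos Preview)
Your proof is correct and follows essentially the same route as the paper: split the simple parabolic into two fixed points of $p_c^{\circ 2k}$, observe that $p_c^{\circ k}$ permutes them, deduce in each case that the multipliers are real (resp.\ complex conjugate) via the antiholomorphic chain rule, and then use boundedness of the sum of holomorphic fixed point indices to force one attracting/one repelling (resp.\ $\Re(\rho_n-1)=O(\Im\rho_n^2)$ and $\rho_n\notin\R$). Your organization via the permutation of $\{z_n,z_n'\}$ and your explicit verification that the period is exactly $2k$ in the transposition case are a bit more detailed than the paper's write-up, but the argument is the same.
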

\begin{proof}
The point $z_0$ is a simple parabolic fixed point of the holomorphic map $p_{c_0}^{\circ 2k}$, so under small perturbations it must split up into exactly two fixed points of $p_{c_n}^{\circ 2k}$ (unless $c_n$ is some other parameter on the parabolic arc, where $z_n=z'_n$ are still parabolic). As periodic points of $p_{c_n}$, these must both have period $k$ or both period $2k$. They converge to the parabolic orbit, so their multipliers must tend to $1$ and their fixed point indices $1/(1-\rho_n)$ and $1/(1-\rho'_n)$ must tend to $\infty$. However, the sum of these indices must tend to the finite fixed point index of the parabolic periodic point.

If the period equals $k$, then the orbit of $p_c^{\circ 2k}(z_n)=z_n$ visits each of the $k$ periodic points twice: once for an even (holomorphic) and once for an odd (antiholomorphic) iterate, and the chain rule implies that the multiplier $\rho_n$ is real. The same argument applies to $z'_n$ and $\rho'_n$. The two fixed point indices are real and have large absolute values (once $\rho$ and $\rho'$ are close to $1$), so their sum can be bounded only if one index is positive and the other one negative; hence one orbit must be attracting and the other one repelling.

If the period equals $2k$, then the periodic points $z_n$ and $z'_n$ that are near $z_0$ must be on the same orbit with $p_{c_n}^{\circ k}(z_n)=z'_n$ and $p_{c_n}^{\circ k}(z'_n)=z_n$. A similar argument as above shows $\rho'_n=\ovl{\rho_n}$. For the sum of the fixed point indices, we obtain
\begin{equation}
\frac{1}{1-\rho_n}+\frac{1}{1-\ovl{\rho_n}}
=
{2\Re\left(\frac{1}{1-\rho_n}\right) =}
\frac{2(1-\Re\rho_n)}{|1-\rho_n|^2} \;.
\label{Eq:FixedPointIndex}
\end{equation}
Since his quantity must have a finite limit, the multipliers cannot be real. 
Writing $\eps_n:=1-\rho_n$, we have $\Re\eps_n=O(\eps_n^2)$, hence 
$\Re\eps_n=O(\Im\eps_n^2)$.
\end{proof}

In the following two results, we will show that both possibilities actually occur in every neighborhood of every simple parabolic parameter of odd period.


\begin{lemma}[Parabolics on Boundary of Hyperbolic Components]
\label{Lem:ParabolicsOnBoundary}
If a map $p_c$ has a parabolic periodic point of period $k$, then $c$ 
is on the boundary of a hyperbolic component of period $k$.
\end{lemma}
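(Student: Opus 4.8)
The plan is to show that the parabolic parameter $c$ is a limit of hyperbolic parameters of period $k$, which already places $c$ on the boundary of some hyperbolic component, and then to argue that this component has period exactly $k$. For the first part, I would distinguish the two cases of Lemma~\ref{Lem:TypesPerturbation}. If $c$ is a simple parabolic of odd period $k$: by the previous lemma, every nearby perturbation splits the parabolic orbit into two orbits whose fixed-point indices (for the holomorphic $2k$-th iterate) go to $\pm\infty$ with opposite signs in the period-$k$ case, or become complex conjugate in the period-$2k$ case. To produce genuinely attracting orbits of period $k$, I would move in a well-chosen direction in parameter space: the residu it\'eratif/parabolic index computation sketched in the paragraph before Lemma~\ref{Lem:TypesPerturbation} shows that one of the two period-$k$ orbits created by a generic perturbation is attracting. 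Since an attracting periodic point persists under small further perturbation and its immediate basin must absorb the critical orbit of $p_c$, the parameter then lies in a hyperbolic component of period $k$; letting the perturbation shrink shows $c$ is on the boundary of such a component.

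For the even-period case and the double parabolic (cusp) case, the argument is similar but goes through the holomorphic second iterate directly: a parabolic fixed point of a holomorphic map is always a limit of parameters for which a cycle of the appropriate period is attracting — this is the standard fact for holomorphic families, applied to the family $c\mapsto p_c^{\circ 2}$, which varies real-analytically and non-degenerately in $c$ (the critical point $0$ of $p_c$ depends on $c$, and its orbit moves). One must check that the attracting cycle so produced has period $k$ (not a proper divisor and not $2k$); for period-$k$ parabolics this follows because the parabolic point has exact period $k$ and perturbation cannot decrease the period of the relevant cycle, while period $2k$ is ruled out in the attracting case by the index/conjugate-multiplier dichotomy just as in Lemma~\ref{Lem:TypesPerturbation}.

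The main obstacle I anticipate is precisely this last point: ensuring that the perturbation actually produces an \emph{attracting} orbit of \emph{period $k$} rather than merely a pair of orbits with real multipliers near $1$ (one of which is repelling) or a complex-conjugate pair of period $2k$. The sign of the real part of the residu it\'eratif controls which alternative occurs, and one needs to know this sign — or rather, one needs to choose the direction of approach in the one-real-parameter-plus-transverse-directions picture so that at least one of the created cycles has multiplier of modulus strictly less than $1$. Here the real-analyticity (and the failure of the open mapping principle for the multiplier, flagged in the introduction) means one cannot simply quote the holomorphic argument verbatim; instead one uses that the multiplier of the holomorphic $2k$-th iterate, as a real-analytic function of $c$, is non-constant near $c$ (otherwise the whole neighborhood would be parabolic, contradicting discreteness of parabolic parameters, cf.\ \cite[Lemma~2.10]{Multicorns2}) and takes values of modulus $<1$ somewhere nearby. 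Combining that with the fixed-point-index bookkeeping of Lemma~\ref{Lem:TypesPerturbation} pins down that the attracting cycle has period $k$, completing the proof.
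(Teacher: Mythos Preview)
Your approach has a genuine gap that the paper's proof avoids by a different mechanism. You try to stay inside the one--parameter family $c\mapsto p_c$ and argue, via Lemma~\ref{Lem:TypesPerturbation} together with ``non-constancy of the multiplier'', that some nearby parameter has an \emph{attracting} orbit of period~$k$. But Lemma~\ref{Lem:TypesPerturbation} only gives a dichotomy: for a given perturbation $c_n\to c_0$ off the arc you either get two period-$k$ orbits (one attracting, one repelling) \emph{or} a single period-$2k$ orbit with complex-conjugate multipliers. It does not tell you that the first alternative is actually realized by any $c_n$ in the family. Your appeal to ``the multiplier of $p_c^{\circ 2k}$ is non-constant and hence takes values of modulus $<1$ nearby'' does not close this gap: the two nearby fixed points of $p_{c}^{\circ 2k}$ carry \emph{two} multipliers, and when they are complex conjugate (the period-$2k$ branch) an attracting orbit you find this way has period $2k$, not $k$. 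Non-constancy alone cannot force the multipliers to be real. In short, you are implicitly assuming that the ``period-$k$ side'' of the parabolic arc is non-empty, which is precisely what needs to be proved (and is only established later, in Lemma~\ref{Lem:PeriodDoubling}, again by the trick below).

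The paper sidesteps the whole issue by leaving the family. One restricts $p_{c_0}$ to an antipolynomial-like map of degree $d$ and then adds a small multiple of a high-degree antipolynomial chosen to vanish on the parabolic cycle but with non-vanishing derivative there; this gives direct control of the multiplier and makes the cycle genuinely attracting of exact period $k$. The Straightening Theorem (Theorem~\ref{Thm:Straightening}) then returns a parameter $c$ in the family arbitrarily close to $c_0$ with an attracting period-$k$ orbit. This Douady--Hubbard device is exactly what compensates for the failure of the open-mapping principle for the multiplier in the antiholomorphic family that you flagged.
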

\begin{proof}
We will employ a classical argument by Douady and Hubbard.
Consider a map $p_{c_0}$ with a parabolic orbit of odd period $k$. To 
see that it is on the boundary of a hyperbolic component $W$ of 
period $k$, restrict the antipolynomial to an antipolynomial-like map 
of equal degree and perturb it slightly so as to make the indifferent 
orbit attracting: this can be achieved by adding a small complex 
multiple of an antipolynomial that vanishes on the periodic cycle but 
the derivative of which does not. Then apply the straightening 
theorem (Theorem~\ref{Thm:Straightening}) to bring it back into our family of maps $p_c$. This can be 
done with arbitrarily small Beltrami coefficients, so $c$ is near 
$c_0$ (see also \cite[Theorem~2.2]{Multicorns2}).
\end{proof}

Of course, the indifferent orbit can also be made repelling by the 
same reasoning. However, one would expect that a perturbation of a simple parabolic periodic point, here of period $k$, creates two periodic points of period $k$. In our case, this is possible whenever the perturbation goes into the hyperbolic component $W$, and then one of the two orbits after perturbation is attracting and the other is repelling; a perturbation creating two repelling period $k$ orbits is not possible within our family.
It turns out, though, that if $k$ is odd, then one can also perturb so that no nearby orbit of period $k$ remains --- and an orbit of period $2k$ is created. (The number of periodic orbits of given period must remain constant for perturbations of holomorphic maps such as $p_c^{\circ 2k}$, not for antiholomorphic maps such as $p_c^{\circ k}$.)


\begin{lemma}[Orbit Period Doubles in Bifurcation Along Arc]
\label{Lem:PeriodDoubling}
Every parabolic arc with a parabolic orbit of period $k$ (necessarily
odd) is the locus of transition where two periodic orbits of period
$k$ (one attracting and one repelling near the arc) turn into one
orbit of period $2k$ (attracting, repelling, or indifferent near the
arc). Equivalently, every parameter $c$ with a simple parabolic 
periodic orbit of odd period $k$ is on the boundary of a hyperbolic 
component $W$ of period $k$, and $c$ has a neighborhood $U$ so that 
for $c'\in W\cap U$, the parabolic orbit splits up into two orbits 
for $p_{c'}$ of period $k$ (one attracting and one repelling), while 
it splits into one orbit of period $2k$ for $c'\in U\sm\ovl W$.
\end{lemma}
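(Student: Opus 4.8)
The plan is to combine the local perturbation analysis of Lemma~\ref{Lem:TypesPerturbation} with the holomorphic fixed point index, using the parabolic arc structure of Proposition~\ref{Prop:ParabolicArc} to separate the two regimes. By Lemma~\ref{Lem:ParabolicsOnBoundary}, the parameter $c$ with simple parabolic orbit of odd period $k$ lies on the boundary of a hyperbolic component $W$ of period $k$; fix a small neighborhood $U$ of $c$. For $c'\in U$ sufficiently close to $c$, Lemma~\ref{Lem:TypesPerturbation} tells us the parabolic orbit splits into exactly two period-$k$ orbits (mutually attracting/repelling) or into one period-$2k$ orbit, or $c'$ stays on the parabolic arc. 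So the content of the statement is: the first alternative holds precisely for $c'\in W\cap U$, the second precisely for $c'\in U\sm\ovl W$, and the arc itself is exactly the transition locus $\partial W\cap U$.

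First I would argue that whenever $c'\in W$, the period-$k$ attracting orbit of $p_{c'}$ (which exists by definition of $W$) is one of the two orbits emanating from $z_0$; since it is attracting, its multiplier is real (the chain-rule argument from Lemma~\ref{Lem:TypesPerturbation}: an odd-period orbit is traversed once holomorphically and once antiholomorphically by the second-return map), so we are in the first alternative, and the companion orbit is repelling. Conversely, if $c'$ is in the first alternative, then $p_{c'}$ has an attracting period-$k$ orbit, hence $c'\in W$ (this uses that $W$ is the hyperbolic component containing the relevant attracting orbit near $z_0$, after possibly shrinking $U$ so that no other hyperbolic component interferes — one can justify this by Hausdorff-continuity of the attracting orbit in $c'$). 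This identifies $W\cap U$ with the first-alternative locus. It follows that $U\sm\ovl W$ consists of parameters in the second alternative (period-$2k$ orbit) together with the parabolic arc; but arc parameters are parabolic, not hyperbolic, and by Proposition~\ref{Prop:ParabolicArc} they form a real-analytic arc through $c$ that must lie in $\partial W$ (each is on the boundary of a period-$k$ component by Lemma~\ref{Lem:ParabolicsOnBoundary}, and continuity of the bifurcation forces this component to be $W$). Hence, after shrinking $U$, $U\sm\ovl W$ is exactly the second-alternative locus.

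The remaining point is that the transition is genuinely along the whole arc, i.e., the arc is one-dimensional and separates the two regimes locally, rather than $W$ touching $c$ only at an isolated point. Here I would invoke that $\partial W$ near $c$ consists of parabolic parameters of period $k$ (Lemma~\ref{Lem:ParabolicsOnBoundary} applied in reverse: a boundary point of a period-$k$ hyperbolic component has an indifferent period-$k$ orbit, which in the odd-period antiholomorphic setting is forced to be parabolic with multiplier $+1$, as recalled in Section~\ref{Sec:ParabolicDynamics}), so $\partial W\cap U$ is contained in the union of parabolic arcs; combined with the real-analytic parametrization by Ecalle height from Proposition~\ref{Prop:ParabolicArc} and the fact that through $c$ there is exactly one such arc, we get $\partial W\cap U$ equal to a sub-arc of the parabolic arc through $c$. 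Since $U\sm\ovl W$ is open and nonempty (it contains second-alternative parameters — which exist in every neighborhood, as the index computation \eqref{Eq:FixedPointIndex} shows the second alternative is not obstructed), the arc genuinely bounds $W$ on one side and the period-$2k$ region on the other.

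The main obstacle I expect is the ``conversely'' direction and the localization of $W$: ruling out that a first-alternative parameter $c'$ near $c$ could have its attracting period-$k$ orbit belong to some other hyperbolic component, and symmetrically ensuring that the arc through $c$ is on the boundary of this same $W$ and not some neighboring component. This is handled by a continuity/shrinking argument — the two orbits bifurcating from $z_0$ depend continuously on $c'$ and stay close to $z_0$, so for $U$ small enough the attracting one, when it exists, determines a single well-defined hyperbolic component — but making this rigorous requires care that the hyperbolic component is constant over $W\cap U$. A secondary subtlety is verifying that second-alternative parameters actually occur in $U\sm\ovl W$ (so that the statement is not vacuous on that side); this follows because the index identity \eqref{Eq:FixedPointIndex} permits a finite limit only with $\rho_n\notin\R$, and such perturbations are realized by moving transversally off the arc in the direction not entering $W$ — but pinning down that this direction exists is exactly the kind of transversality statement that the subsequent Theorems~\ref{Thm:BifurcationArcs} and~\ref{Thm:EcalleHeightZeroNoBifurcation} are designed to establish, so here one only needs the weaker non-emptiness, which already follows from $W$ occupying one side of the arc and $U$ being a full neighborhood.
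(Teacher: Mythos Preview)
Your framework is correct and matches the paper's: Lemma~\ref{Lem:TypesPerturbation} gives the trichotomy, Lemma~\ref{Lem:ParabolicsOnBoundary} puts $c$ on $\partial W$, and the first alternative is easily identified with $W\cap U$. But there is a genuine gap at the point you yourself flag as a ``secondary subtlety'': you never establish that second-alternative parameters exist near $c_0$, i.e.\ that $U\sm\ovl W$ is nonempty. Your proposed justification is circular --- you say this ``follows from $W$ occupying one side of the arc,'' but nothing in your argument rules out that $W$ fills \emph{both} components of $U\sm\mathcal A$. The arc is in $\partial W$, yes, but an open set can perfectly well have a real-analytic arc in its boundary while surrounding it on both sides; neither the index identity~\eqref{Eq:FixedPointIndex} nor Proposition~\ref{Prop:ParabolicArc} obstructs this a priori. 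Everything else in your outline would then collapse to the tautology ``if $c'\notin\ovl W$ then the second alternative holds,'' with no such $c'$ available.

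The paper closes exactly this gap, and it is the main content of the proof. One constructs an explicit perturbation of $p_{c_0}$ \emph{as an antipolynomial-like map}: add $\eps f$ where $f$ is an antipolynomial vanishing to second order along the parabolic orbit except at one point, then a direct local estimate in coordinates where $p_{c_0}^{\circ k}(z)=\zbar+A\zbar^2+o(\zbar^2)$ shows that for small $\eps>0$ the perturbed $k$-th iterate has \emph{no} fixed point near $z_0$. Straightening (Theorem~\ref{Thm:Straightening}) then produces parameters $c_n\to c_0$ in the family $p_c$ with no period-$k$ orbit near $z_0$; these are outside $\ovl W$ and must be in the period-$2k$ alternative. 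Only after this existence is in hand does the paper invoke Lemma~\ref{Lem:TypesPerturbation} to conclude that the transition locus is exactly $\partial W$. You should expect to need some such construction; the parameter space is only real-analytic, so soft arguments about components of $U\sm\mathcal A$ do not suffice.
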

\begin{proof}
As in Lemma~\ref{Lem:ParabolicsOnBoundary}, consider a map $p_{c_0}$ 
with a parabolic orbit of odd period $k$ and restrict it to an 
antipolynomial-like map of equal degree. This time, we want to perturb it so as to make the period $k$ orbit vanish altogether; therefore, we cannot just add a polynomial that takes the value zero along this orbit.


Let $z_0$ be one of the parabolic periodic points and change 
coordinates by translation so that $z_0=0$. By rescaling, we may 
assume that near $0$, we have $p_{c_0}^{\circ 
k}(z)=\zbar+A \zbar^2+o(\zbar^2)$ (note that conjugation by complex scaling changes the coefficient in front of $\zbar$; see the remark below). The second iterate has the local form $p_{c_0}^{2k}(z)= z+(A+\ovl A) z^2+o(z^2)$, so the assumption that the parabolic orbit is simple means $\Re A\neq 0$; conjugating if necessary by $z\mapsto -z$ we may assume that $\Re A>0$.

Let $z_{k-1}$ be the periodic point with $p_{c_0}(z_{k-1})=z_0$.
Let $f$ be an antipolynomial (presumably of large degree) that 
vanishes at the indifferent orbit except at $z_{k-1}$, where it takes the value $f(z_{k-1})=1$; assume further that the first and second derivatives of $f$ vanish 
at the entire indifferent orbit, and that 
$f$ vanishes to order $d$ at the critical point. For sufficiently small $\eps\in\C$, we will consider 
$f_\eps(z)=p_{c_0}(z)+\eps f(z)$; by slightly adjusting the domain 
boundaries, this will give an antipolynomial-like map of the same 
degree $d$ as before, and it will continue to have a single critical 
point of maximal order. The map $p_{c_0}$ has finitely many orbits of 
period $k$, all but one of which are repelling, and for sufficiently 
small $\eps$ these will remain repelling.

\hide{
\marginpar{Isn't there an easier argument. The map $p_c^{\circ 2k}$ looks like\break $z\mapsto z+z^2+\epsilon$
 near $0$, and as such has two fixed points near $0$. --- This is as easy as I could make it.} 
}
\marginpar{There was a nice parabolic index argument that no longer seems necessary?}

We claim there is a neighborhood $U$ of $z_0=0$ so that for sufficiently small $\eps>0$, the map $f_\eps$ will not have a point of period $k$ in this neighborhood. We use our local coordinates where $p_{c_0}^{\circ k}(z)=\zbar+A \zbar^2+o(\zbar^2)$. We may assume that $|p_{c_0}^{\circ k}(z)-\zbar -A\zbar^2|\le |Az^2|$ in $U$, and also $|y|<1/|8A|$. 

We have
\hide{
\[f_\eps^{\circ k}(z) = f_0^{\circ k}(0)+\eps (\partial/\partial\eps) f_\eps^{\circ k}(0)+\zbar(\partial/\partial \zbar)p_{c_0}^{\circ k}(0) = 0+ \eps^p\mu + \zbar +hot
\]
}
\[f_\eps^{\circ k}(z) 
\hide{= p_{c_0}^{\circ k}(z)+\eps (\partial/\partial\eps) f_\eps^{\circ k}(0) +hot} 
= p_{c_0}^{\circ k}(z)+ \eps +O(|z|^3)
\]
because $f$ vanishes to second order along the parabolic orbit. By restricting $U$ and $\eps$, we may assume that $|f_\eps^{\circ k}(z)-p_{c_0}^{\circ k}(z)-\eps| < (\Re A)|z|^2/2$. 

Now suppose $z=x+iy\in U$. If $|y|> 8|A|x^2$, then 
\[
|p_{c_0}^{\circ k}(z)-\zbar|\le 2|Az^2|=2|A|(x^2+y^2)<2|A|\left(\frac{|y|}{8|A|}+\frac{|y|}{8|A|}\right)=|y|/2
\] 
and 
\[
|f_\eps^{\circ k}(z)-\zbar-\eps| \le |f_\eps^{\circ k}(z)- p_{c_0}^{\circ k}(z)-\eps|+|p_{c_0}^{\circ k}(z)-\zbar | \le (\Re A)|z|^2/2+ 2|Az^2| \le |y|
\;,
\] 
so on this domain $p_{c_0}^{\circ k}$ and $f_\eps^{\circ k}$ behave essentially like complex conjugation (plus an added real constant) and thus have no fixed points.

However, if $|y|\le 8|A|x^2$, i.e., $z$ is near the real axis, then $\Re(A\zbar^2)\ge (\Re A) |z|^2/2$ and we have
\begin{align*}
\Re f_\eps^{\circ k}(z) &\ge \Re p_{c_0}^{\circ k}(z) +\eps - |f_\eps^{\circ k}(z)-p_{c_0}^{\circ k}(z)-\eps|
\ge \Re z + \Re(A\zbar^2) +\eps  +O(|z|^3)
\\
& \ge \Re z+\eps>\Re z.
\end{align*}
so that $f_\eps^{\circ k}$ does not have a fixed point with $|y|\le 8|A|x^2$ either. The size of the neighborhood $U$ is uniform for all sufficiently small $\eps$.

Now apply the straightening theorem as in the 
Lemma~\ref{Lem:ParabolicsOnBoundary}: this yields antipolynomials $p_{c_n}$ 
near $c_0$ with $c_n\to c_0$ for which there is one orbit of period 
$k$ fewer than before perturbation, and these are all repelling, so we are outside of $W$.
Since $z_0$ has period $k$ for the \emph{holomorphic} map 
$p_{c_0}^{\circ 2}$, there is a sequence $(z_n)$ of periodic points 
of period $k$ for $p^{\circ 2}_{c_n}$ with $z_n\to z_0$. These points must have period $2k$.

This shows that arbitrarily close to $c_0$ there are parameters for which the indifferent period $k$ orbit has turned into an orbit of period $2k$; similarly, by Lemma~\ref{Lem:ParabolicsOnBoundary} there are parameters near $c_0$ for which there is an attracting orbit of period $k$. Finally, by Lemma~\ref{Lem:TypesPerturbation}, any perturbation of $p_{c_0}$ away from the parabolic arc either creates an attracting orbit of period $k$ or an orbit of period $2k$ (which may be attracting, repelling, or indifferent; see Corollary~\ref{Cor:BifArcs} and Theorem~\ref{Thm:EcalleHeightZeroNoBifurcation} below). The transition between these two possibilities (attracting orbit of period $k$ vs.\ orbit of period $2k$) can happen only when the period $k$ orbit is indifferent, hence on the boundary of a hyperbolic component of period $k$. This proves the claim.
\end{proof}

\begin{remark}
The local behavior of an antiholomorphic map at a fixed point (or a periodic point of odd period) is quite different from the holomorphic case. If the fixed point is at $0$, such a map can be written $f(z)=a_1\zbar + a_2\zbar^2+\dots$; we will suppose $a_1\neq 0$. The second iterate takes the form $f^{\circ 2}(z)=|a_1|^2 z+\dots$, so indifferent orbits are always parabolic and $|a_1|$ is an invariant under conformal conjugations. However, $a_1$ itself is not: conjugating $u=\lambda z$ leads to 
\[
f_1(u)=(\ovl\lambda/\lambda) a_1\ovl u + (\ovl\lambda^2/\lambda) a_2\ovl u^2+\dots \;,
\]
so $\arg a_1$ depends on the rotation of the coordinate system (the linear approximation $df$ has eigenvalues $|a_1|$ and $-|a_1|$ with orthogonal eigenlines, and of course their orientation depends on the rotation of the coordinate system).

Specifically if $|a_1|=1$, we can choose $\lambda$ so that $f_1(u)=\ovl u+ A\ovl u^2+\dots$ with $A\in\C$, and conjugation by scaling can change $|A|$. Note that we have $f_1^{\circ 2}(u)=u+(A+\ovl A)u^2+\dots$.
If $\Re A\neq 0$, then there is a local \emph{quadratic} conjugation $v=au+bu^2$ with $a\in\R$ that brings our map into the form
$f_2(v)=\ovl v+\ovl v^2+\dots$, as can be checked easily. However, if $\Re A\neq 0$, there is no such change of coordinates because $f_1^{\circ 2}(u)=u+O(u^3)$, so the origin has a multiple parabolic fixed point.
\end{remark}

\hide{
\begin{corollary}[Parabolics on Boundary of Hyperbolic Components]
\label{Cor:ParabolicsOnBoundary}
If a map $p_c$ has a parabolic periodic point of period $k$, then $c$ 
is on the boundary of a hyperbolic component of period $k$.
\end{corollary}
\begin{proof}
This has been shown at the beginning of the previous lemma.
\end{proof}
}


\begin{proposition}[Fixed Point Index on Parabolic Arc]
Along any parabolic arc of odd period, the fixed point index is a 
real valued real-analytic function that tends to $+\infty$ at both 
ends.
\end{proposition}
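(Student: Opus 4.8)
The plan is to treat the three assertions separately: real-valuedness and real-analyticity by soft arguments, and the behaviour at the ends by a fixed-point-index count at the parabolic cusps bounding the arc.

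\emph{Real-valuedness.} Fix $c$ on the arc and let $z_0=z_0(c)$ be its simple parabolic periodic point. Since the orbit has odd period $k$, the first return map $p_c^{\circ k}$ fixes $z_0$; since the orbit is indifferent, $p_c^{\circ k}$ has nonzero derivative at $z_0$, hence is a local antiholomorphic diffeomorphism there; and it commutes with the holomorphic second return map $p_c^{\circ 2k}$. Thus $p_c^{\circ 2k}$ admits, near $z_0$, an antiholomorphic self-conjugacy fixing $z_0$. Composing with complex conjugation $z\mapsto\ovl z$ turns this into a \emph{holomorphic} conjugacy between $p_c^{\circ 2k}$ at $z_0$ and the map $z\mapsto\ovl{p_c^{\circ 2k}(\ovl z)}$ at $\ovl z_0$, whose holomorphic fixed point index is the complex conjugate of that of $p_c^{\circ 2k}$ at $z_0$. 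As the index is a conformal conjugacy invariant, $\iota(z_0)=\ovl{\iota(z_0)}\in\R$.

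\emph{Real-analyticity.} By Proposition~\ref{Prop:ParabolicArc} the map $h\mapsto c(h)$ is real-analytic. Because the parabolic orbit is simple, $z_0(c)$ is a simple zero of $z\mapsto (p_c^{\circ 2k})'(z)-1$ (a higher-order zero would mean parabolic multiplicity $\ge 2$), and this function is holomorphic in $z$ and polynomial -- hence real-analytic -- in $(c,\cbar)$; so the real-analytic implicit function theorem gives that $z_0(c)$, and therefore $z_0(h)$, is real-analytic. Writing the index as the contour integral $\iota(h)=\frac{1}{2\pi i}\oint_{|z-z_0(h)|=\delta}\frac{dz}{z-p_{c(h)}^{\circ 2k}(z)}$ for a small fixed $\delta$ (so that $z_0(h)$, an isolated fixed point, is the only fixed point inside) exhibits $\iota$ as a real-analytic function of $h$ near any given parameter; by the previous paragraph it is $\R$-valued.

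\emph{Behaviour at the ends.} By Lemma~\ref{Lem:ParabolicArcLimits} and the subsequent Remark, as $h\to+\infty$ the parameter $c(h)$ converges to a parabolic cusp $c_*$, at which $z_0(c_*)$ is a parabolic fixed point of $p_{c_*}^{\circ 2k}$ of multiplicity $q=2$; thus $p_{c_*}^{\circ 2k}(z)-z$ has a triple zero at $z_0(c_*)$, and for $h$ large the fixed points of $p_{c(h)}^{\circ 2k}$ in a fixed small disk $D$ about $z_0(c_*)$ are exactly the simple parabolic $z_0(h)$ (a double zero) together with one further simple fixed point $z_1(h)$. This $z_1(h)$ has period $k$ for $p_{c(h)}$: if it had period $2k$, then $p_{c(h)}^{\circ k}(z_1(h))$ would be a fourth fixed point of $p_{c(h)}^{\circ 2k}$ inside $D$, contradicting the triple zero; hence (chain rule, as in Lemma~\ref{Lem:TypesPerturbation}) its multiplier $\rho_1(h)=(p_{c(h)}^{\circ 2k})'(z_1(h))$ is real. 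Since $z_1(h)\to z_0(c_*)$ and the limit is parabolic, $\rho_1(h)\to 1$; since $c(h)$ is a parabolic parameter it lies on no hyperbolic component, so $p_{c(h)}$ has no attracting cycle and $z_1(h)$ is not attracting; moreover $\rho_1(h)\ne 1$ because $p_{c(h)}$ cannot carry two distinct parabolic period-$k$ orbits (two such orbits would require two disjoint immediate basins, each containing the single critical orbit of $p_{c(h)}$). Being real, tending to $1$, never equal to $1$, and never $<1$, we get $\rho_1(h)>1$ for all large $h$, so $\frac{1}{1-\rho_1(h)}\to-\infty$. Finally, the total index $\frac{1}{2\pi i}\oint_{\partial D}\frac{dz}{z-p_{c(h)}^{\circ 2k}(z)}=\iota(h)+\frac{1}{1-\rho_1(h)}$ depends continuously on $c(h)$ and converges to the finite real number $\iota(z_0(c_*),p_{c_*}^{\circ 2k})$; combined with the previous sentence this forces $\iota(h)\to+\infty$. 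The argument at $h\to-\infty$ is identical.

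\emph{Main obstacle.} The delicate point is the last step, and specifically the \emph{sign}: everything hinges on the extra fixed point $z_1(h)$ unfolding from the cusp being repelling, which is what distinguishes $+\infty$ from $-\infty$. This is forced by combining that $c(h)$ is never a hyperbolic parameter (ruling out an attracting $z_1(h)$) with the fact that $z_1(h)$ has period $k$ (making $\rho_1(h)$ real, so ``not attracting'' becomes $\rho_1(h)\ge 1$); the auxiliary observation that a single map cannot have two simple parabolic period-$k$ orbits then excludes the borderline case $\rho_1(h)=1$.
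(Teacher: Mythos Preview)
Your proof is correct and follows the same strategy as the paper's: real-valuedness from the antiholomorphic symmetry, real-analyticity from the real-analytic dependence of the parabolic point and the residue integral on the Ecalle-height parameter, and $\iota(h)\to+\infty$ from the fact that at a cusp the simple parabolic orbit merges with a repelling period-$k$ orbit whose index $1/(1-\rho_1)$ tends to $-\infty$ while the total index over a small disk stays bounded. You supply more detail than the paper on \emph{why} the merging orbit is repelling (period $k$ via the zero-count, real multiplier, no attracting cycles on a parabolic parameter), which the paper simply asserts; and your real-valuedness argument via conformal invariance under the antiholomorphic self-conjugacy $p_c^{\circ k}$ is a clean alternative to the paper's route through the perturbation identity~(\ref{Eq:FixedPointIndex}).
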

\begin{proof}
The fact that the fixed point index is real valued follows for 
instance from (\ref{Eq:FixedPointIndex}). The Ecalle height 
parametrizes the arc real-analytically 
(Proposition~\ref{Prop:ParabolicArc}), and as the Ecalle height is 
changed by a quasiconformal deformation, the residue integral that 
defines the fixed point index depends analytically on the height (the 
integrand as well as the integration path). Therefore, the index 
depends real-analytically on Ecalle height.

The parabolic periodic point is simple for all parameters along the 
arc, but towards the end of a cusp the parabolic orbit merges with 
another repelling periodic point so as to form a double parabolic 
(Lemma~\ref{Lem:ParabolicArcLimits}). The orbit with which it merges 
is repelling, say with multiplier $\rho$, so its index $1/(1-\rho)$ 
tends to $\infty$ in $\C$. In order for the limiting double parabolic 
to have finite index, the index $\iota(z_n)$ of the parabolic orbit 
of $p_{c_n}$ must tend to $\infty$ as well as $c_n$ tends to the end 
of a parabolic arc.

Note that the index $\iota(z_n)$ is real by 
(\ref{Eq:FixedPointIndex}), so it tends to $+\infty$ or to $-\infty$. 
Since $|\rho|>1$,  the index $1/(1-\rho)$ always has real part less 
than $+1/2$. This implies that $\iota(z_n)\to+\infty$ (or the sum in 
the limit would not be finite).
\end{proof}


\begin{theorem}[Odd-Even Bifurcation and Fixed Point Index]
\label{Thm:BifurcationArcs}
Every parabolic arc of period $k$ intersects the boundary of a hyperbolic component of period $2k$ at the set of points where the fixed point index is at 
least $1$, except possibly at (necessarily isolated) points where the 
index has an isolated local maximum with value $1$.
\end{theorem}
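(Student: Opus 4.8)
The plan is to fix a simple parabolic parameter $c_0=c(h_0)$ on a period-$k$ parabolic arc and to decide membership of $c_0$ in the boundary of a hyperbolic component of period $2k$ entirely through the period-$2k$ orbit produced by the local bifurcation of Lemma~\ref{Lem:PeriodDoubling}. First I would record the structural picture near $c_0$. Since $c_0$ is parabolic, the critical orbit lies in the parabolic basin, so the unique non-repelling cycle of $p_{c_0}$ is the parabolic one, of odd period $k$; every other cycle of period dividing $2k$ is repelling and persists (repelling) under perturbation. By Lemma~\ref{Lem:PeriodDoubling}, for $c'$ near $c_0$ the parabolic orbit splits into two period-$k$ orbits on the $W$-side and into a single period-$2k$ orbit on the complementary side $U\sm\ovl W$. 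Hence the only possible attracting period-$2k$ orbit near $c_0$ is this split orbit, and it exists only off the $W$-side; moreover, if $c'_n\to c_0$ and $p_{c'_n}$ carries an attracting period-$2k$ orbit, then (the filled-in Julia sets being uniformly bounded near $c_0$) a subsequence of these orbits converges to a non-repelling cycle of $p_{c_0}$, necessarily the parabolic one, so the orbit is the split orbit and $c'_n$ lies off the $W$-side. A routine connectedness argument then gives: $c_0$ lies on the boundary of a hyperbolic component of period $2k$ if and only if parameters off the $W$-side, arbitrarily close to $c_0$, carry an attracting split period-$2k$ orbit.

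Next I would carry out the index computation. For $c'$ off the $W$-side, write $z(c'),z'(c')$ for the two fixed points of $p_{c'}^{\circ 2k}$ into which $z_0$ splits; by Lemma~\ref{Lem:TypesPerturbation} they satisfy $p_{c'}^{\circ k}(z(c'))=z'(c')$ and have multipliers $\rho(c')=1-\eps(c')$ and $\ovl{\rho(c')}$. Letting $G(c')$ be the sum of their holomorphic fixed point indices, formula~(\ref{Eq:FixedPointIndex}) gives $G(c')=2\Re\eps(c')/|\eps(c')|^2\in\R$, and from $|\rho(c')|^2=1-2\Re\eps(c')+|\eps(c')|^2$ one reads off that the split orbit is attracting exactly when $G(c')>1$. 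On the other hand $G(c')$ is the residue of $1/(z-p_{c'}^{\circ 2k}(z))$ over a small fixed circle around $z_0$, so it depends real-analytically on $c'$ on a whole neighbourhood of $c_0$ in parameter space, it restricts on the arc to the fixed point index $\iota$ (by the preceding proposition on the fixed point index), and in particular $G(c')\to\iota(h_0)$ as $c'\to c_0$.

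Combining the two steps: if $\iota(h_0)>1$, then by continuity $G>1$ on a whole neighbourhood of $c_0$, so parameters off the $W$-side carrying an attracting period-$2k$ orbit accumulate at $c_0$, whence $c_0$ lies on the boundary of a period-$2k$ component; if $\iota(h_0)<1$, then $G<1$ near $c_0$, no nearby parameter carries an attracting period-$2k$ orbit, and $c_0$ does not. Since the boundary of a period-$2k$ component is closed, and $\iota$ is real-analytic on $\R$ with $\iota\to+\infty$ at both ends, the set $\{\iota\ge1\}$ is a finite union of closed intervals together with finitely many isolated points, each an isolated local maximum of $\iota$ with value $1$; the closed intervals lie on the boundary of a period-$2k$ component (by the case $\iota>1$ together with closedness, which also covers their endpoints), the set $\{\iota<1\}$ lies off it (by the case $\iota<1$), and only the isolated maximum points remain undecided — which is exactly the clause ``except possibly'' in the statement.

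I expect the main obstacle to be the first, structural step: showing that no period-$2k$ hyperbolic component other than the one produced by the local bifurcation can approach $c_0$, so that the index inequality genuinely is equivalent to membership in the boundary of a period-$2k$ component. This rests on the full bookkeeping of periodic orbits under perturbation — using that $p_c^{\circ2}$ is holomorphic so that cycles of period dividing $2k$ are counted with constant multiplicity, that the parabolic cycle has odd period and parabolic multiplicity $1$, and that uniqueness of the non-repelling cycle forces perturbed attracting cycles to converge to it — together with the compactness argument for the limiting orbits. Once this is in place, the index identity from (\ref{Eq:FixedPointIndex}) and the real-analyticity of the parametrised residue integral make the remainder routine, given Lemma~\ref{Lem:TypesPerturbation} and Proposition~\ref{Prop:ParabolicArc}.
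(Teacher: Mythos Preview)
Your proposal is correct and follows essentially the same route as the paper: the key identity $|\rho|<1\Leftrightarrow 2\Re\bigl(1/(1-\rho)\bigr)>1$ for the split period-$2k$ orbit, combined with continuity of the summed index toward the parabolic index $\iota$ and real-analyticity of $\iota$ along the arc, is exactly the paper's argument. You are more explicit than the paper about the structural step (why any nearby attracting period-$2k$ orbit must be the split orbit coming from the local bifurcation), which the paper leaves implicit; otherwise the two proofs coincide.
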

\begin{proof}

Consider a parameter $p_{c_0}$ on a parabolic arc, and a sequence 
$c_n\to c_0$ so that all $p_{c_n}$ have all periodic orbits of period 
$k$ repelling. As in Lemma~\ref{Lem:PeriodDoubling}, let $z_0$ be a 
parabolic periodic point for $p_{c_0}$, let $z_n$ be a periodic point 
of period $2k$ for $p_{c_n}$ with $z_n\to z_0$, and let 
$z'_n:=p_c^{\circ k}(z_n)$. Let $\rho_n$ and $\rho'_n=\ovl{\rho_n}$ 
be the multipliers of $z_n$ and $z'_n$.
The sum of the two fixed point indices equals 
$2\Re\left(\frac{1}{1-\rho_n}\right)$. We have $|\rho_n|<1$ if and 
only if $2\Re\left(\frac{1}{1-\rho_n}\right)>1$.

Therefore, if $c_0$ is on the boundary of a period $2k$ component, we 
can choose $c_n$ so that $|\rho_n|<1$ and the fixed point index at 
$c_0$ is at least $1$. Conversely, if the fixed point index is 
greater than $1$, then we must have $|\rho_n|<1$ for all large $n$, 
and the limit is on the boundary of a period $2k$ component. If the 
index equals $1$, by real-analyticity of the index, either the index 
has an isolated local maximum there, or the point is a limit point of 
points with index greater than $1$ (note that the set of points with 
index $1$ is isolated as the index is real-analytic and  tends to 
$\infty$ at the ends).
\end{proof}

\begin{corollary}[Bifurcation Along Arcs]
\label{Cor:BifArcs}
Every parabolic arc has, at both ends, an interval of positive length 
at which a bifurcation from a hyperbolic component of odd period $k$ 
to a hyperbolic component of period $2k$ occurs.
\qed
\end{corollary}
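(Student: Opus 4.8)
The plan is to combine the previous Proposition (Fixed Point Index on Parabolic Arc) with Theorem~\ref{Thm:BifurcationArcs} directly. By the Proposition, along any parabolic arc the fixed point index $\iota$ is a real-analytic real-valued function of Ecalle height $h\in\R$, and $\iota(h)\to+\infty$ as $h\to\pm\infty$. In particular, there exists $H>0$ such that $\iota(h)>1$ for all $|h|>H$: the set $\{h:\iota(h)\le 1\}$ is contained in the compact interval $[-H,H]$, so near each end the index is strictly greater than $1$ on an interval of the form $(H,\infty)$ or $(-\infty,-H)$, both of which have positive (indeed infinite) length.

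Now I would invoke Theorem~\ref{Thm:BifurcationArcs}: every point of a parabolic arc at which the index is at least $1$ — with the possible exception of isolated points where the index attains an isolated local maximum equal to $1$ — lies on the boundary of a hyperbolic component of period $2k$. On the sub-arc where $\iota(h)>1$ strictly, there are no such exceptional points, so every parameter on this sub-arc is simultaneously on the boundary of the period-$k$ component (by Lemma~\ref{Lem:ParabolicsOnBoundary}, or by Proposition~\ref{Prop:ParabolicArc}, every parameter of the arc is) and on the boundary of a period-$2k$ component. That is exactly the assertion that a bifurcation from period $k$ to period $2k$ occurs along an interval of positive length at that end of the arc. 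Applying the same reasoning at the other end gives the positive-length bifurcation interval there.

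There is essentially no obstacle here, since all the analytic content has already been established: the asymptotics $\iota\to+\infty$ at the ends and the real-analyticity come from the Proposition, and the dictionary between $\iota\ge 1$ and the period-doubling bifurcation comes from Theorem~\ref{Thm:BifurcationArcs}. The only point requiring a word of care is to note that we use the \emph{strict} inequality $\iota(h)>1$ on the relevant sub-arcs, which holds on a full neighborhood of each end and thus automatically avoids the exceptional isolated-maximum case in Theorem~\ref{Thm:BifurcationArcs}; this is what makes the bifurcation locus contain a genuine interval rather than merely a point. Hence the corollary follows.
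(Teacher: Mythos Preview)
Your argument is correct and is exactly the intended one: the paper gives no separate proof (just a \qed), since the corollary follows immediately from the Proposition on the fixed point index tending to $+\infty$ at both ends together with Theorem~\ref{Thm:BifurcationArcs}. Your observation that the strict inequality $\iota>1$ near the ends sidesteps the isolated-maximum exception is precisely the point.
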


\begin{corollary}[Boundary of Bifurcating Component Lands]
Let $W$ be a hyperbolic component of period $2k$ bifurcating from a 
hyperbolic component $W_0$ of odd period $k$, the set $\partial W\sm 
\ovl{W_0}$ accumulates only at isolated points in $\partial W_0$.
\qed
\end{corollary}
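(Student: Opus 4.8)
The plan is to argue by contradiction, combining the real-analyticity of the fixed point index on the parabolic arc (Proposition on ``Fixed Point Index on Parabolic Arc'') with the characterization of the bifurcation locus from Theorem~\ref{Thm:BifurcationArcs}. The hyperbolic component $W$ of period $2k$ bifurcates from $W_0$ along a sub-arc $\gamma$ of a parabolic arc $\mathcal A$ of $W_0$; by Theorem~\ref{Thm:BifurcationArcs}, this sub-arc $\gamma$ is precisely the (closed) set of parameters on $\mathcal A$ where the fixed point index is $\ge 1$, possibly minus finitely many isolated local maxima with index exactly $1$. Since the index is real-analytic in the Ecalle height and tends to $+\infty$ at both ends of $\mathcal A$, the level set $\{\text{index}=1\}$ on $\mathcal A$ is finite, so $\gamma$ consists of finitely many closed sub-arcs, and its endpoints in $\partial W_0$ are isolated points of $\partial W_0$. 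It remains to see that $\partial W\sm\ovl{W_0}$ can only accumulate at these finitely many transition points and nowhere else on $\partial W_0$.

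First I would establish the local picture away from $\ovl\gamma$: if $c_0\in\partial W_0$ is a simple parabolic parameter not lying in the closed bifurcation sub-arc $\ovl\gamma$ (in particular, the index at $c_0$ is $<1$, or $c_0$ is one of the isolated index-$1$ local maxima), then by Theorem~\ref{Thm:BifurcationArcs} and Lemma~\ref{Lem:TypesPerturbation} there is a neighborhood $U_0$ of $c_0$ in which every parameter off the parabolic arc creates a period-$2k$ orbit with multipliers $\rho_n,\ovl{\rho_n}\notin\R$ of modulus $>1$ (repelling), so $U_0\cap W=\emptyset$; hence $c_0\notin\partial W$, and in particular $\partial W\sm\ovl{W_0}$ does not accumulate at $c_0$. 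Next I would handle parabolic cusps: the endpoints of $\mathcal A$ are cusps, and there are only finitely many cusps of period $k$ (cited from \cite[Lemma~2.10]{Multicorns2}); near a cusp the double-parabolic dynamics is rigid, so again after restricting to a neighborhood one sees that $W$ can only touch $\partial W_0$ at the cusp itself, an isolated point. Combining these two local statements, $\partial W\cap\partial W_0$ is contained in the union of $\ovl\gamma$'s (finitely many) endpoints together with the (finitely many) cusps that are limit points of $\gamma$ — a finite, hence isolated, set; and along the interior of $\gamma$, $W$ agrees with a fixed hyperbolic component on one side, so $\partial W\sm\ovl{W_0}$ stays away from the interior of $\gamma$. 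Therefore $\partial W\sm\ovl{W_0}$ accumulates only at these finitely many isolated points of $\partial W_0$.

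The main obstacle I anticipate is the uniformity of the neighborhoods $U_0$: one needs that for $c_0$ ranging over a compact sub-arc of $\mathcal A\sm\ovl\gamma$ the ``no period-$2k$ attracting orbit nearby'' conclusion holds on neighborhoods of uniform size, so that finitely many of them cover the relevant part of $\partial W_0$ and one does not produce infinitely many stray accumulation points. This should follow from the continuity (indeed real-analyticity) of the perturbed multipliers in the parameter, together with the fact that the period $k$ orbit of $p_{c_0}$ is the \emph{only} non-repelling period $k$ orbit (all other period $k$ orbits stay uniformly repelling under perturbation, as in the proof of Lemma~\ref{Lem:PeriodDoubling}); so the only candidate for a bifurcation to period $2k$ is the local splitting of $z_0$, whose behavior is governed by the sign of $\Re(\rho_n-1)$, i.e.\ by the fixed point index at $c_0$, which is bounded away from $1$ on the compact sub-arc. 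A secondary, more bookkeeping-style point is to check that at an isolated index-$1$ local maximum $c_0\in\mathcal A$ that is \emph{not} in the closure of $\{\text{index}>1\}$, $W$ genuinely does not reach $c_0$ — but this is exactly the ``except possibly at isolated points'' escape clause already built into Theorem~\ref{Thm:BifurcationArcs}, and in any case such points are isolated in $\partial W_0$, so they are harmless for the statement.
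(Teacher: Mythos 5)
Your argument is correct and is essentially the paper's: both hinge on the fact that any accumulation point of $\partial W\setminus\ovl{W_0}$ on $\partial W_0$ must have fixed point index exactly $1$ (for $c_n\in\partial W\setminus\ovl{W_0}$ the period-$2k$ orbit is indifferent with $|\rho_n|=1$, whence $2\Re\bigl(1/(1-\rho_n)\bigr)=1$), and that the index-$1$ level set on the arc is discrete by real-analyticity of the index together with its divergence at the ends. The paper states this in one line in the remark following the corollary, whereas you reach it more circuitously by characterizing the bifurcation sub-arc $\gamma$ via Theorem~\ref{Thm:BifurcationArcs} and working away from $\ovl\gamma$; one slip of wording — you write that ``$\partial W\cap\partial W_0$ is contained in a finite set,'' which cannot hold since $\gamma\subset\partial W\cap\partial W_0$ — but the intended conclusion about the accumulation set of $\partial W\setminus\ovl{W_0}$ is what is needed, and your argument for it is sound.
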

This rules out the possibility that the boundary curve of $W$ 
accumulates at $\partial W_0$ like a topologist's sine curve. The 
reason is that the set of limit points must have fixed point index 
$1$, and the set of such points is discrete.
\marginpar{\vskip -2 cm comment that there should be just two such points? A picture?}

\section{Parabolic Perturbations}
\label{Sec:ParabolicPerturbations}

In this section, we fix a hyperbolic component $W$ of odd period $k$ 
and a parameter $c_0\in\partial W$ with a simple parabolic orbit 
$z_0, \dots,z_{k-1}$. It is well known that there exists a 
neighborhood $V$ of $z_0$ and a local coordinate $\phi_{c_0}\colon 
V\to \C$ such that 
\[
\phi_{c_0}\circ p_{c_0}^{\circ 2k}\circ 
\phi_{c_0}^{-1}(\zeta)=\zeta+\zeta^2h(\zeta)
\] 
with $\phi_{c_0}(z_0)=0$, 
$h(0)=1$ and $|h(\zeta)-1|<\eps$ on $V$. We need to establish similar 
local coordinates for the local dynamics after perturbation.

\begin{proposition}[Perturbed Parabolic Dynamics]
\label{Prop:PerturbedFatouCoords2}
For every $\eps>0$, one can choose neighborhoods $V$ of $c_0$ and $U$ of $z_0$  so that 
 there is a 
$\phi_c\colon V\to\C$ that satisfies
\[
f_c(\zeta):=\phi_c\circ p_c^{\circ 2k}\circ
\phi_c^{-1}(\zeta)=\zeta+(\zeta^2-a_c^2)h_c(\zeta)
\]
with
$|h_c(\zeta)-1|<\eps$ on $V$ and $a_c\in\C$.
\end{proposition}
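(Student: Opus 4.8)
The plan is to obtain the asserted normal form from the base parameter $c_0$ by a single application of the Weierstrass preparation theorem in the dynamical variable --- treating $c$, and after complexification also $\cbar$, as a holomorphic parameter --- followed by an affine recentering that places the two fixed points of $p_c^{\circ 2k}$ near $z_0$ symmetrically about the origin. At $c=c_0$ the data $a_{c_0}=0$ and $h_{c_0}=h$ will reproduce the given coordinate $\phi_{c_0}$, so $\phi_c$ will depend continuously (indeed real-analytically) on $c$.

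\emph{Step 1: pass to the base coordinate.} Translate so that $z_0=0$ and conjugate the second return map by the given coordinate: set $g_c:=\phi_{c_0}\circ p_c^{\circ 2k}\circ\phi_{c_0}^{-1}$, so $g_{c_0}(\zeta)=\zeta+\zeta^2 h(\zeta)$ with $h(0)=1$. Since $p_c(z)=\zbar^d+c$ yields $p_c^{\circ 2}(z)=(z^d+\cbar)^d+c$, iteration shows $p_c^{\circ 2k}(z)=P(z,c,\cbar)$ for a polynomial $P$ holomorphic in all three slots; treating $\cbar=:c'$ as an independent complex variable, $(\zeta,c,c')\mapsto g_{c,c'}(\zeta):=\phi_{c_0}\bigl(P(\phi_{c_0}^{-1}(\zeta),c,c')\bigr)$ is holomorphic near $(0,c_0,\ovl{c_0})$, and $g_{c,c'}(\zeta)-\zeta$ vanishes to order exactly $2$ in $\zeta$ at the base point because $h(0)\neq0$.

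\emph{Step 2: Weierstrass preparation and recentering.} Apply the Weierstrass preparation theorem, with holomorphic dependence on the parameters $(c,c')$, on a fixed polydisc to write $g_{c,c'}(\zeta)-\zeta=Q_{c,c'}(\zeta)\,E_{c,c'}(\zeta)$, where $Q_{c,c'}(\zeta)=\zeta^2+b_{c,c'}\zeta+d_{c,c'}$ with $b,d$ holomorphic in $(c,c')$ and vanishing at $(c_0,\ovl{c_0})$, and $E_{c,c'}$ holomorphic, nonvanishing, with $E_{c_0,\ovl{c_0}}(\zeta)=h(\zeta)$; restricting to the real slice $c'=\cbar$ makes $Q_c,E_c,b_c,d_c$ real-analytic in $c$. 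Completing the square gives $Q_c(\zeta-b_c/2)=\zeta^2-a_c^2$ with $a_c^2:=b_c^2/4-d_c$, so conjugating $g_c$ by the translation $T_c(\zeta):=\zeta+b_c/2$ yields
\[
f_c(\zeta):=T_c\circ g_c\circ T_c^{-1}(\zeta)=\zeta+(\zeta^2-a_c^2)\,h_c(\zeta),\qquad h_c(\zeta):=E_c(\zeta-b_c/2).
\]
Here $a_c^2,h_c$ are real-analytic in $c$ with $a_{c_0}=0$ and $h_{c_0}=h$, and the two fixed points of $f_c$ near $0$ are $\zeta=\pm a_c$. Setting $\phi_c:=T_c\circ\phi_{c_0}$ --- holomorphic in $\zeta$, real-analytic (hence continuous) in $c$, and equal to the given coordinate at $c=c_0$ --- gives exactly $f_c=\phi_c\circ p_c^{\circ 2k}\circ\phi_c^{-1}$. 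If one wishes the normalization $h_c(0)=1$, post-compose with the scaling $\zeta\mapsto h_c(0)\,\zeta$ (close to the identity since $h_{c_0}(0)=1$), which changes $h_c$ to $\zeta\mapsto h_c(\zeta/h_c(0))/h_c(0)$ and $a_c^2$ to $h_c(0)^2a_c^2$.

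\emph{Step 3: the smallness estimate, and where the work is.} Given the target $\eps$, first invoke the known base construction with $\eps/2$ in place of $\eps$, so $|h-1|<\eps/2$ on some fixed neighborhood of $0$. As $c\to c_0$ the maps $g_c$ converge to $g_{c_0}$ uniformly on the fixed polydisc (continuity of $c\mapsto p_c^{\circ 2k}$ and of $\phi_{c_0}$), so $g_c(\zeta)-\zeta\to\zeta^2 h(\zeta)$ uniformly; by the analytic dependence on parameters in the Weierstrass factorization, $b_c,d_c\to0$ and $E_c\to h$ uniformly on a fixed smaller polydisc, hence $h_c\to h$ uniformly there. Consequently there are neighborhoods $V$ of $c_0$ and $U$ of $z_0$, uniform over $c\in V$, on which $\phi_c$ is a biholomorphism onto its image (which contains both fixed points $\pm a_c$) and $|h_c-1|<\eps$. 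This last point is the only place requiring genuine care --- keeping $U$ and $V$ from degenerating while controlling $|h_c-1|$ --- and it goes through precisely because the Weierstrass data depend continuously (indeed real-analytically) on $c$ on a fixed polydisc and $T_c$ (and the optional scaling) are small perturbations of the identity; everything else is a formal computation. A minor loose end is that $a_c$ is only one of the two square roots of the real-analytic function $a_c^2$, but since the statement asserts only $a_c\in\C$ no consistent branch is needed.
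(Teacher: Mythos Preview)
Your argument is correct and follows essentially the same plan as the paper: translate the base coordinate $\phi_{c_0}$ so that the two perturbed fixed points sit symmetrically at $\pm a_c$, then factor $f_c(\zeta)-\zeta$ as $(\zeta^2-a_c^2)h_c(\zeta)$ and read off $|h_c-1|<\eps$ from continuity in $c$.

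The only real difference is in packaging. The paper does the factoring by bare hands --- locate the two fixed points, shift $\phi_{c_0}$ by their midpoint, and simply \emph{define} $h_c(\zeta):=(f_c(\zeta)-\zeta)/(\zeta^2-a_c^2)$, observing that the singularities at $\pm a_c$ are removable --- and is content with continuous dependence on $c$ (noting, as you do, that $a_c^2$ is continuous while $a_c$ need not be). You instead invoke the Weierstrass preparation theorem with parameters, after complexifying $(c,\cbar)$, and then complete the square; this is slightly heavier machinery but delivers real-analytic dependence of $b_c$, $a_c^2$, and $h_c$ on $c$ for free, which is more than the paper claims or needs downstream.
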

\begin{proof}
Since $p_c^{\circ 2k}$ is holomorphic, after perturbation the 
parabolic fixed point splits up into two simple fixed points in the domain of $\phi_{c_0}$. 
Let $\phi_c:= \phi_{c_0}+b$ where $b$ is chosen so the images of  these fixed points are symmetric to $0$, i.e., at some $\pm  a_c\in\C$
(note that $a_c$ may not be a continuous function in a neighborhood of $c$, but $a_c^2$ is). Then 
\[
h_c(\zeta):=\frac{\left(\phi_c\circ p_c^{\circ 2k}\circ \phi_c^{-1}\right)(\zeta)-\zeta}{(\zeta-a_c)(\zeta+a_c)}
\]
must be  holomorphic on $V$, and the map $h_c$ is close to $h$ at least for $\zeta\neq 0$, hence also near $0$.
\end{proof}

Write $U^+:=U\cap W$ and $U^-:=U\sm \ovl W$ (the parts inside and 
outside of the hyperbolic component $W$). 
By Lemma~\ref{Lem:TypesPerturbation}, for parameters $c\in 
U^-$ the parabolic orbit splits up into one orbit of period $2k$; 
denote it by $w_0(c),\dots,w_{2k-1}(c)$. By restricting $U$, we may 
assume that $|w_0(c)-z_0(c_0)|<\eps$ and $|w_k(c)-z_0(c_0)|<\eps$. 
Moreover, the multipliers $\rho_c:=(p_c^{\circ{2k}})'(z_0)$ and 
$\rho'_c:=(p_c^{\circ{2k}})'(z_k)$ are complex conjugate and 
$|\Re(\rho_c-1)|\in O(|\Im(\rho_c)|^2)$. Since $f'_c(a_c)=1+2a_c 
h_c(a_c)\in\{\rho_c,\rho'_c\}$ and $h_c$ is close to $1$, we see 
that $a_c$ is almost purely imaginary.

Let $L_c$ be the straight line through $a_c$ and $-a_c$ when $c\in{U^-}$; when $c\in\partial W$, let $L_c$ be the eigenline for eigenvalue $-1$ for the parabolic fixed point (every antiholomorphic fixed point with multiplier $1$ has eigenvalues $+1$ and $-1$). This family of lines is continuous in $c$ for $c\in \ovl{U^-}$ (a one-sided neighborhood of $c_0$), and $L_c$ is vertical  (in $\zeta$-coordinates) when $c\in \partial W$. 
For $c\in U^-$, let $\ell_c$ be the segment  $[a_c,-a_c]\subset L_c$. 

Choose $r>0$, and consider the arc of circle $K_c^{in}$ connecting $a_c$ to $-a_c$  going through $r$, and $K_c^{out}$ connecting the same two points through $-r$.   If $U$ is chosen so small that $a_c$ is almost purely imaginary for  $c\in U^-$, these arcs are well defined, and  as $a_c\to 0$, each of these  arcs has a limit, which is the circle through $0$ and centered at $\pm r/2$; see Figure~\ref{Fig:TwoCircles}.

\begin{figure}
\framebox{\Includegraphics{trim=0 0 0 0}{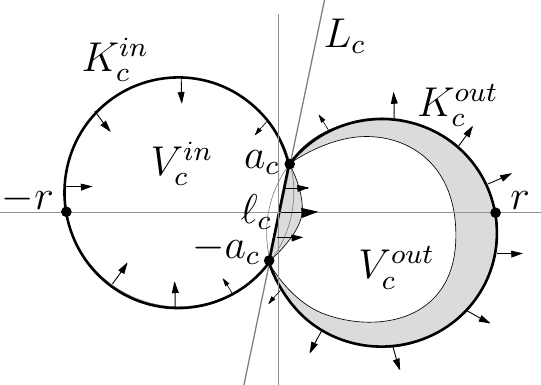}}
\caption{The line segment $\ell_c$ joining $a_c$ to $-a_c$ with its image under $f_c$; the region between these (shaded) is a fundamental domain for the dynamics. The arcs $K_c^{in}$ and $K_c^{out}$ are also shown, as well as an inverse image of $K_c^{out}$ and another fundamental domain bounded by $K_c^{out}$ and its inverse image.
This latter fundamental domain has a non-vanishing limit as $a_c\to 0$. }
\label{Fig:TwoCircles}
\end{figure}

Denote by $V_c^{in}$ the region bounded by $K_c^{in}$ and $\ell_c$, and  by $V_c^{out}$ the region bounded by $K_c^{out}$ and $\ell_c$.  There is a fixed choice of $r>0$ so that for all $c\in U^-$, iterates of $\zeta\in V_c^{in}$ under $f_c$ will remain in $V_c^{in}$ until they exit to $V_c^{out}$ through $\ell_c$, and similarly iterates of $\zeta\in V_c^{out}$ under $f_c^{-1}$ will remain in $V_c^{out}$ (for $c\in U\cap \partial W$ these iterates never exit at all). The details of this argument are somewhat tedious but not difficult: as soon as $|a_c|\ll r$, the iterates in the first quadrant of $K_c^{out}$ move upwards and to the right, while the iterates in the second quadrant (up to the point $a_c$) move upwards and to the left --- except on a short piece of arc near the top of the circle, where the iteration moves essentially upwards. The argument is similar for the other parts of $K_c^{out}$ and $K_c^{in}$.


\begin{proposition}[Ecalle Cylinders After Perturbation] \label{prop:pertFatoucoord} For every 
$c\in \ovl {U^-}$, the quotients $C^{in}_c:=V^{in}_c/p_c^{\circ 
2k}$ and $C^{out}_c:=V^{out}_c/p_c^{\circ 2k}$ 
(the quotients of $V_c^{in}$ and $V_c^{out}$ by the dynamics,  
identifying points that are on the same finite orbits entirely in 
$V^{in}_c$ or in $V^{out}_c$) are complex annuli isomorphic to 
$\C/\Z$.
\end{proposition}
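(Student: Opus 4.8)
The plan is to show that each quotient $C^{in}_c$ and $C^{out}_c$ is a Riemann surface that is topologically an annulus, and then to identify its conformal type as $\C/\Z$ by exhibiting an explicit uniformizing coordinate built from a perturbed Fatou coordinate. First I would treat the boundary case $c\in U\cap\partial W$, which is just the classical parabolic situation: $a_c=0$, the regions $V^{in}_c$ and $V^{out}_c$ degenerate to the attracting and repelling petals (the interiors of the limiting circles through $0$ centered at $\pm r/2$), and the standard theory of conformal Fatou coordinates gives a biholomorphism of each petal, equivariant with respect to $f_c$ resp.\ $f_c^{-1}$ acting by $\zeta\mapsto\zeta+1$, onto a right (resp.\ left) half-plane; passing to the quotient by the deck transformation $\zeta\mapsto\zeta+1$ yields exactly $\C/\Z$. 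This is essentially the content already recorded in Lemma~\ref{LemEcalleCylinders} and its proof, so there is little to do here beyond citing it.

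For $c\in U^-$, the key point is that the dynamics of $f_c$ on $V^{in}_c$ is, after passing to the quotient, still conjugate to translation. The plan is to construct a perturbed Fatou coordinate $\Phi^{in}_c\colon V^{in}_c\to\C$ satisfying $\Phi^{in}_c\circ f_c=\Phi^{in}_c+1$ on the subset of $V^{in}_c$ whose image under $f_c$ still lies in $V^{in}_c$, and such that $\Phi^{in}_c$ maps $V^{in}_c$ biholomorphically onto a vertical strip of width $1$ (i.e.\ a fundamental domain for $\zeta\mapsto\zeta+1$ in $\C$), with the two boundary arcs $\ell_c$ and $f_c(\ell_c)\cap\text{(something)}$ — more precisely the two sides of the fundamental domain shaded in Figure~\ref{Fig:TwoCircles} — mapping to the two vertical edges of the strip, glued by the translation. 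The standard way to produce such a $\Phi^{in}_c$ is the Leau--Fatou construction adapted to perturbations (as in Shishikator Douady's work on parabolic implosion): on the fundamental annulus bounded by $\ell_c$ and its image, the quotient is manifestly a topological annulus, one checks it is non-degenerate (modulus bounded below, uniformly in $c$, using that $r$ is fixed and $|a_c|$ is small), and then one observes that $V^{in}_c/f_c$ is obtained from this fundamental annulus by the gluing induced by $f_c$, which is compatible; the resulting surface is a Riemann surface homeomorphic to an annulus with no boundary, hence conformally $\C/\Z$, $\{r<|z|<R\}$, or a punctured disk. One rules out the last two by noting that $f_c$ generates a $\Z$-action on $V^{in}_c$ with quotient of \emph{finite} modulus only if there are "ends", and the two ends here are genuine (one towards $\ell_c$-side near $a_c$, one towards $K^{in}_c$): concretely, the two fixed points $\pm a_c$ of $f_c$ lie on the boundary arc $\ell_c$, and the linearization of $f_c$ there (multipliers $\rho_c$, $\rho'_c$ close to $1$) shows the orbit space has two punctures' worth of infinite modulus at each end, giving $\C/\Z$. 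The cleanest formulation is: construct $\Phi^{in}_c$ explicitly by the same telescoping series $\Phi^{in}_c(\zeta)=\lim_{n\to\infty}\bigl(f_c^{\circ n}(\zeta)\text{'s coordinate}\bigr)-n$ using a fixed auxiliary coordinate $w=-1/(\zeta)$-type chart in which $f_c$ is $w\mapsto w+1+o(1)$, and verify it descends to an isomorphism $C^{in}_c\xrightarrow{\sim}\C/\Z$.

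The main obstacle I expect is the uniformity in $c$ and the verification that the perturbed Fatou coordinate extends to \emph{all} of $V^{in}_c$ (not just a petal): one must use precisely the trapping property asserted in the paragraph before the proposition — that every $\zeta\in V^{in}_c$ has a well-defined finite forward orbit in $V^{in}_c$ exiting through $\ell_c$, and dually every $\zeta\in V^{out}_c$ has a well-defined backward orbit exiting through $\ell_c$ — to see that $V^{in}_c$ is exactly the union of the fundamental annulus and all its $f_c^{-n}$-preimages inside $V^{in}_c$, so that the quotient is the orbit space of a free, properly discontinuous $\Z$-action on a planar domain, with annular quotient. Making the estimates on $h_c$ (close to $1$) and on the location of $\pm a_c$ (almost purely imaginary, $|a_c|\ll r$) do this job uniformly, and checking the conformal type is $\C/\Z$ rather than a finite-modulus annulus — equivalently that the two ends of $C^{in}_c$ are conformal punctures — is where care is needed; here one can either invoke the classical parabolic-implosion normal forms near the two fixed points $\pm a_c$, or simply note that in any fixed local chart in which $f_c(w)=w+1+O(1/w)$ near both ends, the explicit coordinate $w$ already provides the isomorphism with a neighborhood of an end of $\C/\Z$. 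Finally, the same argument applied to $f_c^{-1}$ on $V^{out}_c$ gives the statement for $C^{out}_c$, completing the proof.
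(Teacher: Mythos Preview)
Your proposal is correct and follows essentially the same route as the paper: dispose of the boundary case $c\in\partial W$ by citing the classical Fatou-coordinate theory, and for $c\in U^-$ build the quotient from a single fundamental domain bounded by $\ell_c$ and $f_c(\ell_c)$, then argue that the two ends of the resulting annulus are conformal punctures (hence the quotient is $\C/\Z$) by analyzing the local dynamics near the fixed points $\pm a_c$. The paper packages the last step as a short standalone lemma proved in $\log$-coordinates, which is exactly the ``linearization near $\pm a_c$ / $w\mapsto w+1+O(1/w)$'' argument you sketch.

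One small caution: your suggested ``cleanest formulation'' via the telescoping limit $\Phi^{in}_c(\zeta)=\lim_{n\to\infty}(\text{coord of }f_c^{\circ n}(\zeta))-n$ does not make sense as written in the perturbed case, since forward orbits in $V^{in}_c$ exit through $\ell_c$ after finitely many steps (as you yourself note later). Stick with the fundamental-domain-plus-ends argument; that is what actually works, and it is what the paper does.
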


\begin{proof}
In the 
parabolic case of $c\in \partial W$, this is a standard result, 
proved using Fatou coordinates (see Milnor~\cite[Sec.~10]{MiBook}). 
We will thus focus on the case $c\in U^-$.

Since the points $\pm a_c$ are almost purely imaginary and $h_c$ is 
almost $1$, it is easy to understand the dynamics of $f_c$, 
represented in Figure~\ref{Fig:TwoCircles}. In 
particular, the line segment $\ell=[-a_c,a_c]$ is sent by $f_c$ to the arc $f_c(\ell)\subset 
\ovl{V_c^{out}}$ still joining $a_c$ to $-a_c$ but disjoint from $\ell$ 
(except at the endpoints).  Let $A^{out}$ be the domain bounded by 
$\ell$ and $f_c(\ell)$. Identifying the two boundary edges of 
$A^{out}$ by $f_c$, we obtain a complex annulus that represents 
$C_c^{out}=A^{out}/p_c^{\circ 2k}$: every finite orbit in $V^{out}$ 
enters $A^{out}\cup f(\ell)$ exactly once.

Finally, we must see that $A^{out}/f_c$ is a bi-infinite annulus, 
i.e., isomorphic to $\C/\Z$ so that its ends are {punctures}.

This follows from the following lemma.

\begin{lemma} Let $\lambda$ be a non-real complex number, and let $g:u\mapsto \lambda u+O(u^2)$ be an analytic map defined in some neighborhood of $0$.  Let $\widetilde Q \subset \C\sm \{0\}$ be the region  bounded by $[0,r]$, $g([0,r])$ and $[r, g(r)]$, that we will take to include $(0,r)$ and $g((0,r))$ but not $(r,g(r))$. Then for $r$ sufficiently small, $(0,r]$ and $g((0,r])$ are disjoint, so that the quotient of $\widetilde Q$ by the equivalence relation identifying $x\in (0,r)$ to $g(x)$ is homeomorphic to an annulus, and and it has infinite modulus.
\end{lemma}
\begin{proof}
The proof consists of passing to $\log u$ coordinates, where the corresponding annulus is bounded by $(-\infty,\log r)\subset\R$ and the image curve $\log g(u)=\log u+\log\lambda+O(u)$; when setting $t=\log u$, the boundary identification relates $t\in\R^-$ to the image curve $t+\log\lambda+e^t$ (for $t\ll -1$). The claim follows.
\end{proof}

This lemma clearly applies to both ends of $A^{out}$, and the argument about $A^{in}$ is the same. This proves the Proposition.
\end{proof}

\newpage

\hide{
It is convenient to be a bit more careful about choosing the neighborhood $V$.
To do this, define $V^{in}(L_c)$ and $V^{out}(L_c)$ as the two components of $V$ cut by $L_c$.
Choose labels so that $V^{in}(L_{c_0})$ contains an attracting direction at the parabolic fixed point, and $V^{out}(L_{c_0})$ contains a repelling direction; choose the labels of $V^{in}(L_c)$ and $V^{out}(L_c)$ by continuity.
 We will choose a family of open sets $V_c$, all contained in the neighborhood $V$ above, depending continuously on $c\in \ovl {U^-}$, and shaped so that the only way to exit $V^{in}_c:=V_c\cap V^{in}(L_c)$ under $f_c$ is through $\ell$ and the only way to exit $V_c^{out}:=V_c\cap  V^{out}(L_c)$ under $f_c^{-1}$ is also through $\ell$; see Figure~\ref{Fig:PerturbedLocalDynamics}. 
(Note the angles in the boundary where it intersects $L_c$.) 
}

\hide{
\begin{figure}
\framebox{\Includegraphics{trim=0 0 0 0,height=45mm,clip}{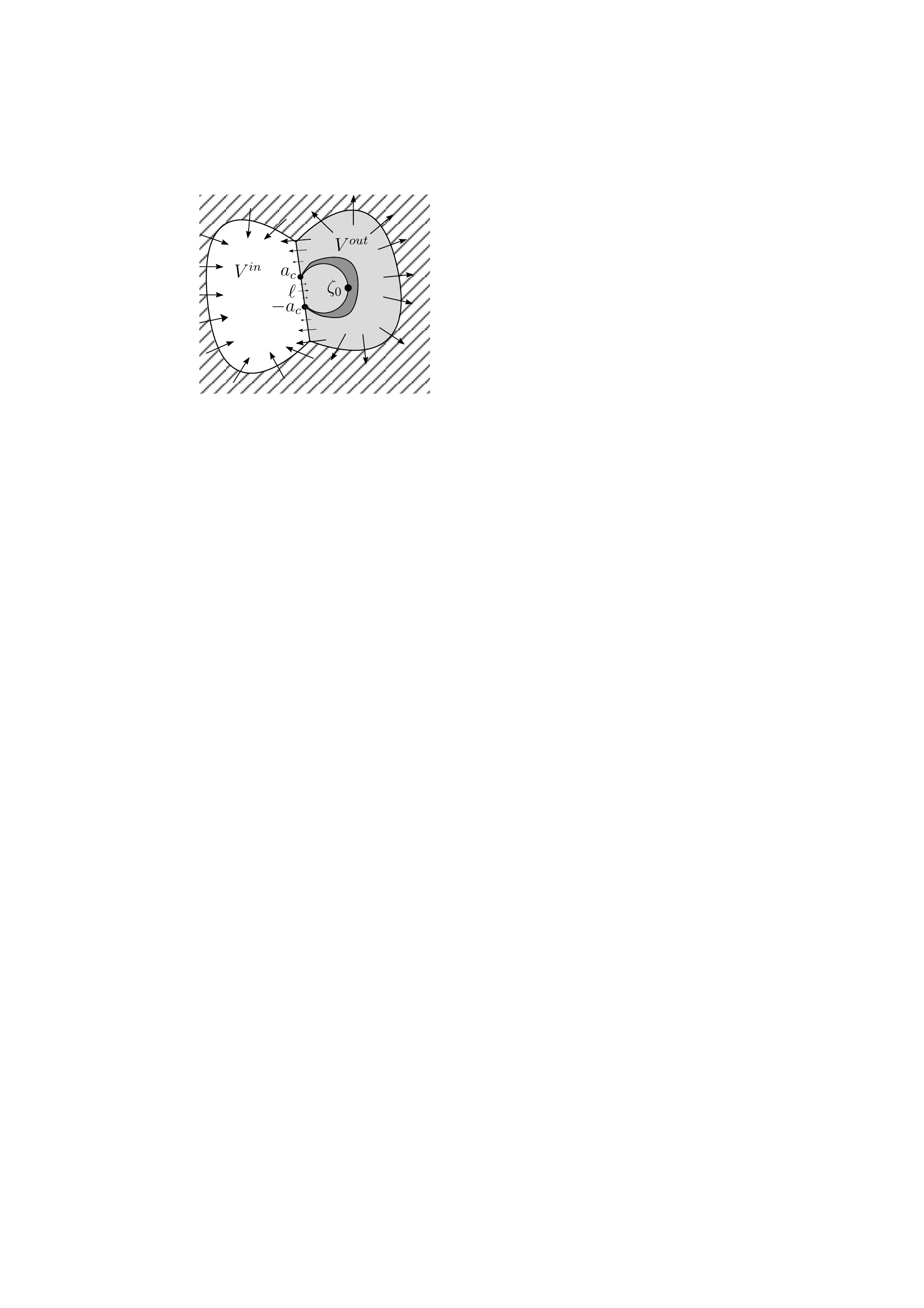}}
\framebox{\Includegraphics{trim=0 04 0 03,height=45mm}{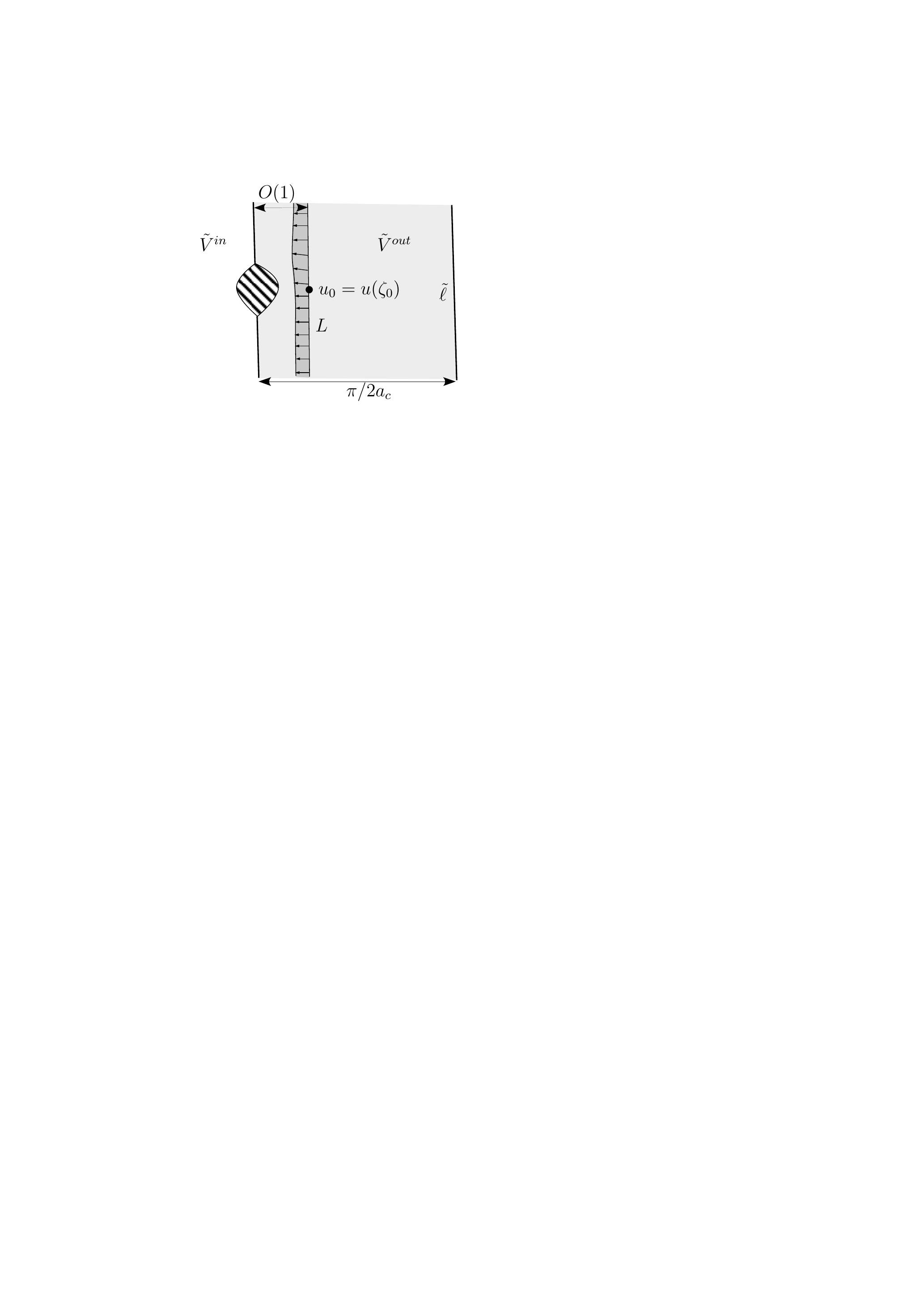}}
\caption{Left: the local dynamics of $p_c^{\circ 2k}$ near the pair 
of fixed points, after perturbation of the parabolic point of period 
$k$. Right: the same situation in $u$-coordinates (adding a tilde to the labels of corresponding objects). A particular fundamental domain through some point $u_0$ in the outgoing petal is highlighted in dark grey; the situation is analogous in the incoming petal.}
\label{Fig:PerturbedLocalDynamics}
\end{figure}
}

\hide{
\begin{proposition}[Ecalle Cylinders After Perturbation] \label{prop:pertFatoucoord} For every 
$c\in \ovl {W^-}$, the quotients of $C^{in}_c:=V^{in}_c/p_c^{\circ 
2k}$ and $C^{out}_c:=V^{out}_c/p_c^{\circ 2k}$ by the dynamics (more precisely 
identifying points that are related by finite orbits entirely in 
$V^{in}_c$ or in $V^{out}_c$) are complex annuli isomorphic to 
$\C/\Z$.
\end{proposition}
\begin{proof}
In the 
parabolic case of $c\in \partial W$, this is a standard result, 
proved using Fatou coordinates (see Milnor~\cite[Sec.~10]{MiBook}). 
We will thus focus on the case $c\in U^-$.
We can work on the coordinate $\zeta$ from 
Proposition~\ref{Prop:PerturbedFatouCoords2}, in which $p_c^{\circ 
2k}$ is written \[
f_c(\zeta)=\zeta+(\zeta^2-a_c^2)h_c(\zeta).
\]
Since the points $\pm a_c$ are almost purely imaginary and $h_c$ is 
almost $1$, it is easy to understand the dynamics of $f_c$, 
represented in Figure~\ref{Fig:PerturbedLocalDynamics} left. In 
particular, the line segment $\ell=[-a_c,a_c]$ is sent by $f_c$ to the arc $f_c(\ell)\subset 
\ovl{V_c^{out}}$ still joining $a_c$ to $-a_c$ but disjoint from $\ell$ 
(except at the endpoints).  Let $A^{out}$ be the domain bounded by 
$\ell$ and $f_c(\ell)$. Identifying the two boundary edges of 
$A^{out}$ by $f_c$, we obtain a complex annulus that represents 
$C_c^{out}=A^{out}/p_c^{\circ 2k}$: every finite orbit in $V^{out}$ 
enters $A^{out}\cup f(\ell)$ exactly once.
Finally, we must see that $A^{out}/f_c$ is a bi-infinite annulus, 
i.e., isomorphic to $\C/\Z$ so that its ends are {punctures}. The  corresponding ``perturbed Fatou coordinate'' and its continuous dependence on parameters is 
also relatively well known, compare 
Shishikura \cite{Mitsu}.  Our setting is a bit simpler than his, and we reprove these results.
Pass to the $u=u(\zeta)$-plane with 
\begin{equation} \label{eq:approxFatoucoord}
u(\zeta) =\frac 1{2a_c}\log \frac {\zeta+a_c}{\zeta-a_c} = \frac 1{2a_c}\log\, \frac {1+a_c/\zeta}{1-a_c/\zeta}\;.
\end{equation}
The M\"obius transformation $\zeta \mapsto (\zeta+a_c)/(\zeta-a_c)$ maps $V^{out}_c$ to the upper half plane (at least if we decide that $a_c$ has positive imaginary part; note that the three boundary points $a_c$, $0$, and $-a_c$ map to $\infty$, $-1$, and $0$), so the principal branch of the logarithm maps the M\"obius transformation image of $V^{out}_c$ to the region $0<\Im u<\pi$. Then the image of $V^{out}_c$ in the  $u$-plane is contained in an almost vertical band, say $\tilde V_c^{out}$, it has large width $\pi/2|a_c|$, it avoids a neighborhood of $0$, and it intersects $\R^+$ within the right half plane. \marginpar{Fix picture subscript}
Let $K$ be a compact subset of $V^{out}_{c_0}$; then $K\subset V^{out}_c$ for $c$ sufficiently close to $c_0$, hence $a_c$ sufficiently close to $0$. 
For $\zeta\in K$, the map $u$ tends uniformly to $1/\zeta$, so the topological circle $\partial(\ovl{V^{out}_c\cup V^{in}_c})$ has a well-defined finite limit as $a_c$ tends to $0$.
Moreover, the expression of $f_c$ in the variable $u$ is a map $g_c$ uniformly close \marginpar{on $K$?}
to $u\mapsto u-1$ as $a_c\to 0$ (it tends to $u\mapsto u-1$ as $\Im u\to\infty$ and as $a_c\to 0$, but not on compact sets: near the fixed points, the map $f_c$ is rotation by the angle $\pm\log\rho_c\approx \pm 2a_ch_c(a_c)$, so in $u$-coordinates it is translation by $-h_c(a_c)$\,). 
Choose an almost vertical line  $L$ parallel to the boundary of the band $\tilde V^{out}$ and going through some point $u_0$ well outside the neighborhood of $0$  where $g_c$ is not defined; we will keep this point $u_0$ fixed as $c$ varies.  Then $L$ will be in the domain of $g_c$ for $|a_c|$ sufficiently small, and its image $L'=g_c(L)$ will almost be the line $L$ translated by $-1$.
Moreover, the Riemann surface obtained by gluing the boundaries of the region $\Omega$ between $L$ and $L'$ by $g_c$ is a bi-infinite annulus.  To  see this, let $\Omega_0$ be the region bounded by $L$ and its exact translate $L''$  by $-1$; the quotient of $ \Omega_0$ given gluing $L$ to $L''$ by translation by $-1$ is exactly $\C/\Z$.  The map $\Omega\to \Omega_0$ that maps line segments  $[\zeta, g_c(\zeta)]$ to to segments $[\zeta, \zeta-1]$ is easily seen to be quasiconformal.
Using $f_c^{-1}$ instead of $f_c$, the argument above proves the 
result for  $A^{in}/f_c$.
\end{proof}
}

Recall the definition of Ecalle cylinders in the parabolic attracting 
and repelling petals of the holomorphic map $p_c^{\circ 2k}$ 
(Lemma~\ref{LemEcalleCylinders}), and the statement that the 
antiholomorphic iterate $p_c^{\circ k}$ introduces an antiholomorphic 
self-map of the Ecalle cylinders that interchanges the two ends. The 
situation is similar here: since the map $p_c^{\circ k}$ commutes 
with $p_c^{\circ 2k}$, it induces antiholomorphic self-maps from 
$C^{in}_c$ to itself and from $C^{out}_c$ to itself. As $p_c^{\circ 
k}$ interchanges the two periodic points at the end of the cylinders, 
it interchanges the ends of the cylinders, so it must fix a geodesic 
in the cylinders $\C/\Z$ that we call again the \emph{equator}. 
Choosing complex coordinates in the cylinders for which the equator 
is at imaginary part $0$, we can again define \emph{Ecalle height} as 
the imaginary part in these coordinates. We will denote the Ecalle 
height of a point $z\in C^{in/out}_c$ by $E(z)$.

Since our arcs of circle $K_c^{in/out}$ depend continuously on $c$ and have a finite non-zero limit as $a_c$ tends to $0$, the construction of the perturbed Ecalle cylinders depends continuously on $c\in \ovl 
{U^-}$.  We summarize this in the following proposition. 

\begin{proposition}[Bundle of Ecalle Cylinders] The disjoint unions
\[
\mathcal C^{in}:=\bigsqcup_{c\in \ovl {U^-}}  C^{in}_c \quad \text 
{and}\quad  \mathcal C^{out}:=\bigsqcup_{c\in \ovl {U^-}}  C^{out}_c
\]
form $2$-dimensional complex manifolds with boundary, and the natural maps
\[
\mathcal C^{in}\to\ovl{U^-} \quad \text {and}\quad \mathcal C^{out}\to\ovl{U^-}
\]
are smooth morphisms that make $\mathcal C^{in}$ and $\mathcal C^{out}$ 
into topologically trivial bundles with fibers isomorphic to $\C/\Z$.

The equators form subbundles of circles, and the Ecalle height of fixed points in $\C$ near $0$ depends continuously on $c$.
\end{proposition}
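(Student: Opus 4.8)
The plan is to assemble the fiberwise constructions of the previous propositions into bundles, using the explicit, continuously-varying arcs $K_c^{in/out}$ as the main tool. The key point already established is that for each $c\in\ovl{U^-}$ the fundamental domains $A^{in}_c$ (bounded by an inverse image of $K_c^{in}$ and $K_c^{in}$) and $A^{out}_c$ (bounded by $\ell_c$ and $f_c(\ell_c)$, or equivalently by $K_c^{out}$ and one of its inverse images) give, after gluing by $f_c$, the cylinders $C^{in}_c$ and $C^{out}_c$ isomorphic to $\C/\Z$; and that as $a_c\to 0$ these arcs and domains have finite non-degenerate limits equal to the parabolic fundamental domains. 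So the first step is to write down the total spaces $\mathcal C^{in}$ and $\mathcal C^{out}$ as quotients: take $\bigsqcup_{c}A^{in/out}_c\subset \ovl{U^-}\times\C$ (using the $\zeta$-coordinate of Proposition~\ref{Prop:PerturbedFatouCoords2}, which itself depends real-analytically, hence continuously, on $c$ away from the issue that $a_c$ is only defined up to sign — but $a_c^2$, $\ell_c$, $L_c$, and the arcs are all continuous), and quotient fiberwise by the gluing $\zeta\sim f_c(\zeta)$ on the two boundary edges. Since the edges, the maps $f_c$, and the domains all vary continuously, this quotient is Hausdorff and the projection to $\ovl{U^-}$ is continuous and open.

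**Next** I would upgrade "continuous bundle" to "smooth morphism with a complex-analytic structure in the fibers." The complex structure on each fiber is the one inherited from $\C$ in the $\zeta$-coordinate, so each fiber is a Riemann surface; the content of the previous two propositions is precisely that each such fiber is $\C/\Z$ (finite modulus at no end: both ends are punctures). For smoothness and local triviality, I would produce explicit local trivializations: for $c$ in a small neighborhood $N$ of a given parameter $c_*\in\ovl{U^-}$, the arcs $K^{in/out}_c$ and segments $\ell_c$ vary smoothly (real-analytically in $c$ for $c\in U^-$, and with one-sided smooth extension to $\partial W$ by the continuity of $L_c$ noted before Proposition~\ref{prop:pertFatoucoord}), so one can choose a smooth family of diffeomorphisms of the fundamental domains $A^{in/out}_c\to A^{in/out}_{c_*}$ respecting the boundary identifications; these descend to fiber homeomorphisms $C^{in/out}_c\to C^{in/out}_{c_*}\simeq\C/\Z$ depending smoothly on $c$, giving the trivialization $N\times(\C/\Z)$. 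Global triviality over $\ovl{U^-}$ then follows because $\ovl{U^-}$ is a (one-sided neighborhood, hence) contractible — or simply simply connected — base, so any circle-or-cylinder bundle with connected structure considerations is trivial; alternatively, since we only assert topological triviality, one shrinks $U$ if necessary so that $\ovl{U^-}$ is contractible and patches the local trivializations directly.

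**For the last two assertions**: the equator in $C^{in/out}_c$ is, by the discussion preceding this proposition, the unique $p_c^{\circ k}$-invariant geodesic, and $p_c^{\circ k}$ depends real-analytically (hence continuously) on $c$; since the equator is cut out as the fixed-point set of the induced antiholomorphic involution on the fiber, and that involution varies continuously in the trivialization, the equators assemble into a continuous subbundle of circles. Finally, the Ecalle height of a fixed point of $f_c$ near $0$ (i.e.\ of $a_c$, or of the parabolic point when $c\in\partial W$) is read off by projecting that point into the cylinder and taking the imaginary part in a coordinate normalizing the equator to height $0$; all ingredients — the point $a_c^2$, the fundamental domain, the uniformizing coordinate, and the equator — depend continuously on $c$, so $E$ does too. **The main obstacle** I expect is the behavior at the boundary stratum $c\in\partial W$: there $a_c=0$, the fixed points collide, the $\zeta$-coordinate's two fixed points merge, and the fundamental domains $A^{in/out}_c$ degenerate in the sense that their defining segment $\ell_c$ shrinks to the point $0$ — yet the propositions guarantee the *cylinders* converge to the parabolic Ecalle cylinders. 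Making the trivialization genuinely smooth (not merely continuous) across $\partial W$, and checking that the uniformizations $C^{in/out}_c\to\C/\Z$ and the equators extend one-sidedly without collapse, is the delicate part; this is handled by working with the arcs $K^{in/out}_c$ rather than with $\ell_c$, exactly as set up before Proposition~\ref{prop:pertFatoucoord}, since those arcs (and the fundamental domains they bound together with their inverse images) have non-vanishing limits, so the degeneration of $\ell_c$ is invisible to the cylinder and its equator.
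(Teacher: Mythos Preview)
Your proposal is correct and follows the paper's approach; in fact the paper gives essentially no proof beyond the sentence preceding the proposition (``Since our arcs of circle $K_c^{in/out}$ depend continuously on $c$ and have a finite non-zero limit as $a_c\to 0$, the construction of the perturbed Ecalle cylinders depends continuously on $c\in\ovl{U^-}$''), so your sketch is an appropriate unpacking of that summary, and you correctly isolate the key point that the arcs $K_c^{in/out}$, unlike $\ell_c$, have non-degenerate limits at $\partial W$.

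One small misreading in your last paragraph: ``fixed points in $\C$ near $0$'' does not refer to the fixed points $\pm a_c$ of $f_c$ (those sit at the punctures of the cylinder and have no finite Ecalle height); it means points $z\in\C$ near $0$ held fixed while $c$ varies, whose Ecalle height in $C_c^{in/out}$ is then claimed to depend continuously on $c$ --- this is what the paper uses later when tracking the critical value through the cylinders.
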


\begin{remark}
Here ``smooth morphism'' means that $\mathcal C^{in}$ and $\mathcal C^{out}$ are families of complex manifolds parametrized by $\ovl{U^-}$ and that the fibers have analytic local coordinates that depend continuously on the parameter.
\end{remark}
 
Of central importance to us is that above $U^-$ (not the closure 
$\ovl{U^-}$)  the two bundles  $\mathcal C^{in}$ and $\mathcal 
C^{out}$ are canonically isomorphic as follows.

\begin{definition}[The Transit Map]
The \emph{transit map} is the conformal isomorphism
\[
T_c\colon C_c^{in}\to C_c^{out}
\]
induced by the conformal isomorphism $p_c^{\circ 2k}\colon A^{in} \to A^{out}$.
\end{definition}
This transit map clearly depends continuously on the parameter $c\in 
U^-$ and preserves the equators, hence Ecalle heights.


Finally, choose a smooth real curve $s\mapsto c(s)$ in $\ovl{U^-}$ (in parameter space), 
para\-me\-trized by $s\in [0,\delta]$ for some $\delta>0$, with $c(s)\in 
U^-$ for $s>0$.
Choose a smooth curve $s\mapsto \zeta(s)$ (in the dynamical planes, typically the critical value), also defined for $s\in 
[0,\delta]$ such that
\[
\zeta(s)\in V^{in}(c(s))
\]
  for all $s\in [0,\delta]$. Then $s\mapsto \zeta(s)$ induces a map 
$\sigma:[0,\delta]\to \mathcal C^{in}$ with $\sigma(s)\in 
C^{in}_{c(s)}$.

\begin{proposition}[Limit of Perturbed Fatou Coordinates]
\label{Prop:PerturbedFatouCoords}
The curve
\[
\gamma:= s\mapsto T_{c(s)}(\sigma(s))
\]
in $\mathcal C^{out}$, parametrized by $s\in (0,\delta]$, spirals as 
$s\searrow 0$ towards the circle on $C^{out}_{c_0}$ at Ecalle height 
$E(\sigma(0))$.
\end{proposition}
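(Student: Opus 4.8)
The plan is to work on a fixed Ecalle cylinder model and track what the transit map does as the pair of fixed points $\pm a_c$ coalesces. First I would fix coordinates: on $C^{out}_{c_0}$ choose the standard Fatou coordinate $\psi^{out}_{c_0}\colon V^{out}_{c_0}\to\C$ in which $p_{c_0}^{\circ 2k}$ is $w\mapsto w+1$, normalized so the equator is at imaginary part $0$; similarly fix $\psi^{in}_{c_0}$ on the incoming side. For $c=c(s)$ with $s>0$, the Ecalle cylinders $C^{in/out}_{c}$ are built from the fundamental domains $A^{in/out}$ bounded by $\ell_c$ and its image, and by Proposition~\ref{prop:pertFatoucoord} each is isomorphic to $\C/\Z$. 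The key quantitative input is the \emph{length of the perturbed cylinders}: in the $u$-coordinate of the lemma in the proof of Proposition~\ref{prop:pertFatoucoord} (equivalently, in a coordinate where $f_c$ is close to $\zeta\mapsto\zeta+1$ away from $\pm a_c$), the number of iterates needed to cross from $\ell_c$ around to $f_c(\ell_c)$ through the ``neck'' near a fixed point is comparable to $\pi/|\mathop{\mathrm{Im}}(\log\rho_c)|\sim \pi/|2a_c|$, which $\to\infty$ as $s\searrow0$. This is what forces the spiraling.

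Next I would set up the transit map in these coordinates. The transit map $T_c$ is induced by $p_c^{\circ 2k}\colon A^{in}\to A^{out}$, i.e.\ it conformally identifies the incoming cylinder with the outgoing one by going ``once around through the neck.'' Concretely: lift $T_c$ to a map between the universal covers; in the perturbed Fatou coordinates $\psi^{in}_c$ (resp.\ $\psi^{out}_c$) adapted to each cylinder (normalized real-analytically in $c$ so the equator is at height $0$, which is possible because the antiholomorphic iterate $p_c^{\circ k}$ fixes the equator), the lift of $T_c$ differs from a translation by an amount that records the ``phase'' accumulated while passing the neck — and because of the neck length computed above, as $s\searrow 0$ the real part of this translation tends to $-\infty$ (or $+\infty$, depending on orientation conventions), while its imaginary part tends to $0$ since $T_c$ preserves Ecalle heights. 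So on the cylinder $\C/\Z$, $T_c$ acts as translation by $\theta(c)+i\cdot 0$ with $\theta(c)\to$ a well-defined limit mod $\Z$ but the \emph{lift} winds infinitely.

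Then I would combine this with continuity of the bundle $\mathcal C^{out}\to\ovl{U^-}$ from the Bundle of Ecalle Cylinders proposition. The point $\sigma(s)\in C^{in}_{c(s)}$ converges, as $s\searrow0$, to $\sigma(0)\in C^{in}_{c_0}$, which has some Ecalle height $E(\sigma(0))=:h_0$; since $T_{c(s)}$ preserves Ecalle heights, $T_{c(s)}(\sigma(s))$ has Ecalle height converging to $h_0$, so the curve $\gamma$ stays asymptotically in the horizontal circle at height $h_0$ on $C^{out}_{c_0}$. Meanwhile the ``horizontal coordinate'' of $T_{c(s)}(\sigma(s))$ on that circle is (up to a continuous error) the phase $\theta(c(s))$ plus the horizontal coordinate of $\sigma(s)$, and the infinite winding of the lift of $T_{c(s)}$ means this horizontal coordinate runs around the circle infinitely often as $s\searrow0$. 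That is precisely the assertion that $\gamma$ spirals towards the circle at Ecalle height $E(\sigma(0))$.

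\textbf{Main obstacle.} The delicate step is the asymptotics of the transit map: showing that the real part of the translation defining the lift of $T_{c(s)}$ genuinely diverges (with a controlled rate $\sim \mathrm{const}/|a_{c(s)}|$) while the Ecalle height is preserved \emph{exactly}. This requires a uniform comparison, as $a_c\to0$, between $f_c$ and the model $\zeta\mapsto\zeta+1$ on the long cylinder, handling both the ``translation region'' (where $f_c$ is uniformly close to a unit translation, contributing a number of steps that blows up like $\pi/|2a_c|$) and the two ``neck regions'' near $\pm a_c$ (where $f_c$ is nearly a rotation by $\pm\arg\rho_c$, contributing a bounded phase). The heights-preserved-exactly part is what uses the antiholomorphic structure: the equator is canonically defined as the $p_c^{\circ k}$-fixed geodesic, and $T_c$ commutes with $p_c^{\circ k}$, so it must carry the incoming equator to the outgoing equator, pinning down the imaginary part with no error term. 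Everything else — continuity of $\sigma$, continuity of the cylinder bundle, the passage from ``lift winds infinitely'' to ``curve spirals'' — is soft and follows from the propositions already established.
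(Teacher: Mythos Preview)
Your proposal is correct and follows essentially the same route as the paper: the transit map preserves Ecalle heights exactly (via the antiholomorphic symmetry), pinning the accumulation set to the circle at height $E(\sigma(0))$, and the phase diverges because traversing the ``gate'' between $\pm a_c$ requires a number of iterates $\sim \pi/|a_c|$ that blows up as $s\searrow 0$. The paper's proof is terser and slightly more direct: rather than lifting $T_c$ to a translation with diverging real part (which requires coordinating normalizations on \emph{both} $C^{in}_c$ and $C^{out}_c$), it works only in $\mathcal C^{out}$ with the fixed trivialization anchored at the point $r$, and observes that the Ecalle phase of $\gamma(s)$ is essentially the iterate count from $\zeta(s)$ to the fundamental domain near $r$, which diverges since near $\ell_c$ the map $f_c$ moves points by $O(|a_c|^2)$. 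One terminological quibble: the slowdown happens as the orbit passes \emph{through the gate} $\ell_c$ between the two fixed points (Douady's ``egg-beater''), not ``around the neck near a fixed point''; your quantitative estimate is right, but the geometry is transit through $\ell_c$, not circumnavigation of $\pm a_c$.
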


Before proving this we need to say exactly what ``spirals'' means. We know that $\mathcal C^{out}$ is a trivial topological bundle of bi-infinite annuli $\C/\Z$ over $\overline U^-$; we can choose a trivialization 
\[
\Phi:\mathcal C^{out} \to \ovl {U^-}\times \C/\Z
\]
by deciding that the point $r$ (see Figure~\ref{Fig:TwoCircles}) corresponds for all $c\in \ovl U^-$ to the origin of $\C/\Z$.  That allows us to define an {\it Ecalle phase\/} $\arg(\gamma(s))$ to be a continuous lift $\phi$ of
\[
s\mapsto \re (pr_2 (\Phi(\gamma(s)))) \in \R/\Z.
\]
Spiralling will mean that the image of $\gamma$ accumulates exactly on the circle  on $\mathcal C^{out}_{c_0}$ at Ecalle height $E(\sigma(0))$, and that in the process the Ecalle phase tends to infinity.

\begin{proof} Since the transit map preserves Ecalle heights, the 
curve $t\mapsto \gamma(s)$ can only accumulate on the circle on 
$C^{out}_{c_0}$ at Ecalle height $E(\sigma(0))$.  It remains to show
that the Ecalle phase tends to infinity. The magnitude of the Ecalle phase essentially measures how many iterates of $f_c$ it takes for $\zeta(s)$ to reach the fundamental domain in $V^{out}_c$ shown in Figure \ref{Fig:TwoCircles}. 

This more or less obviously tends to infinity as $a_c \to 0$;  to get from $V_c^{in}$ to $V_c^{out}$, the orbit must cross $\ell_c$, and near $\ell_c$ the map $f^{\circ k}_c$ moves points less and less as $a_c\to 0$. (In the language of Douady ``it takes longer and longer to go through the egg-beater''.)
In fact, in the dynamics of the limit $c_0$ it takes infinitely many iterations of $f_c^{-1}$ for $r$ to get to the origin, and thus arbitrarily many iterations to reach any small neighborhood $X$ of the origin, and for sufficiently small perturbations the number of backwards iterations to go from $r$ into $X$ is almost the same.
\end{proof}

\begin{remark}
A computation in logarithmic coordinates shows that the Ecalle phase $\arg\gamma(s))$ (essentially the number or iterations required to get from $\zeta(s)$ to the fundamental domain in $V_c^{out}$ containing $r$) satisfies  
\[
\arg\gamma(s) = \frac \pi{|a_c(s)|}(1+o(1)) 
\]
as $a_c\to 0$.
\end{remark}

\hide{
We can easily make this precise and estimate the Ecalle phase; compare Figure \ref{Fig:PerturbedLocalDynamics}. Consider the point $\zeta(s)$ in $V^{in}(c(s))$; we want to estimate the number of iterations until the orbit, after traversing the eggbeater, reaches the fixed fundamental domain in $V^{out}(c(s))$ around the point $u_0$ (shaded in dark grey in Figure~\ref{Fig:PerturbedLocalDynamics}).
}
\hide{
In $u$-coordinates, $\zeta(s)$ becomes approximately $1/\zeta(s)$ in the left half plane and moves in each iteration approximately by $-1$, until it reaches the line $\ell$ which in $u$-coordinates is at $\re u=-\pi/2a_c$, so this takes approximately $\pi/2|a_c|$ iterations (up to a bounded factor close to $1$). The line $\ell$ is represented in the $u$-plane twice, once each in the left and right half planes, corresponding to $V^{in}$ and $V^{out}$. As the orbit of $\zeta(s)$ traverses $\ell$, in the $u$-plane it enters the far right half plane (at real parts approximately $\pi/2|a_c|$) and again takes time about $\pi/2|a_c|$ until it reaches the fundamental domain of $u_0$. 
This  shows that the Ecalle phase $\phi(s)$ tends to infinity and that there exist constants $C_1,C_2>0$ with 
\[
C_1 \frac \pi{|a_c(s)|}\le  \phi(s) \le C_2 \frac \pi{|a_c(s)|} \;.
\]
In fact it isn't too hard to show that it has the asymptotic expansion
\[
\phi(s) = \frac \pi{|a_c(s)|}(1+o(1)) 
\]
as $a_c\to 0$.
}

\section{Parabolic Trees and Combinatorics}
\label{Sec:Combinatorics}

\begin{definition}[Characteristic Parabolic Point and Principal Parabolic]
The \emph{characteristic point} on a parabolic orbit is the unique 
parabolic periodic point on the boundary of the critical value Fatou 
component.

A map with a parabolic orbit is called a \emph{principal parabolic} 
if the parabolic orbit is simple and each point on the parabolic 
orbit is the landing point of at least two periodic dynamic rays.
\end{definition}

\begin{figure}[htbp]
\Includegraphics{height=40mm,trim=0 30 0 40,clip}{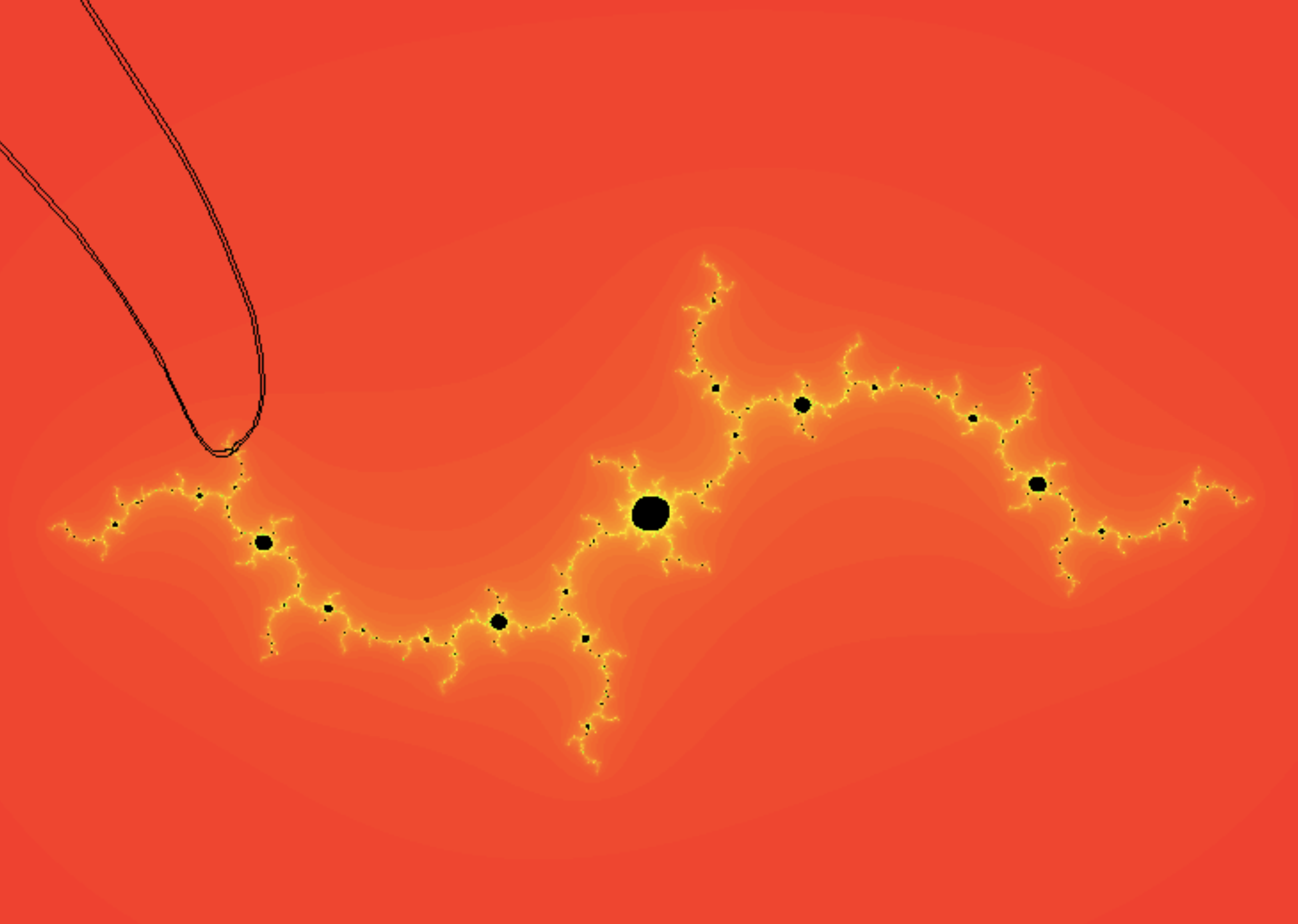}
\Includegraphics{height=40mm,trim=0 20 0 20,clip}{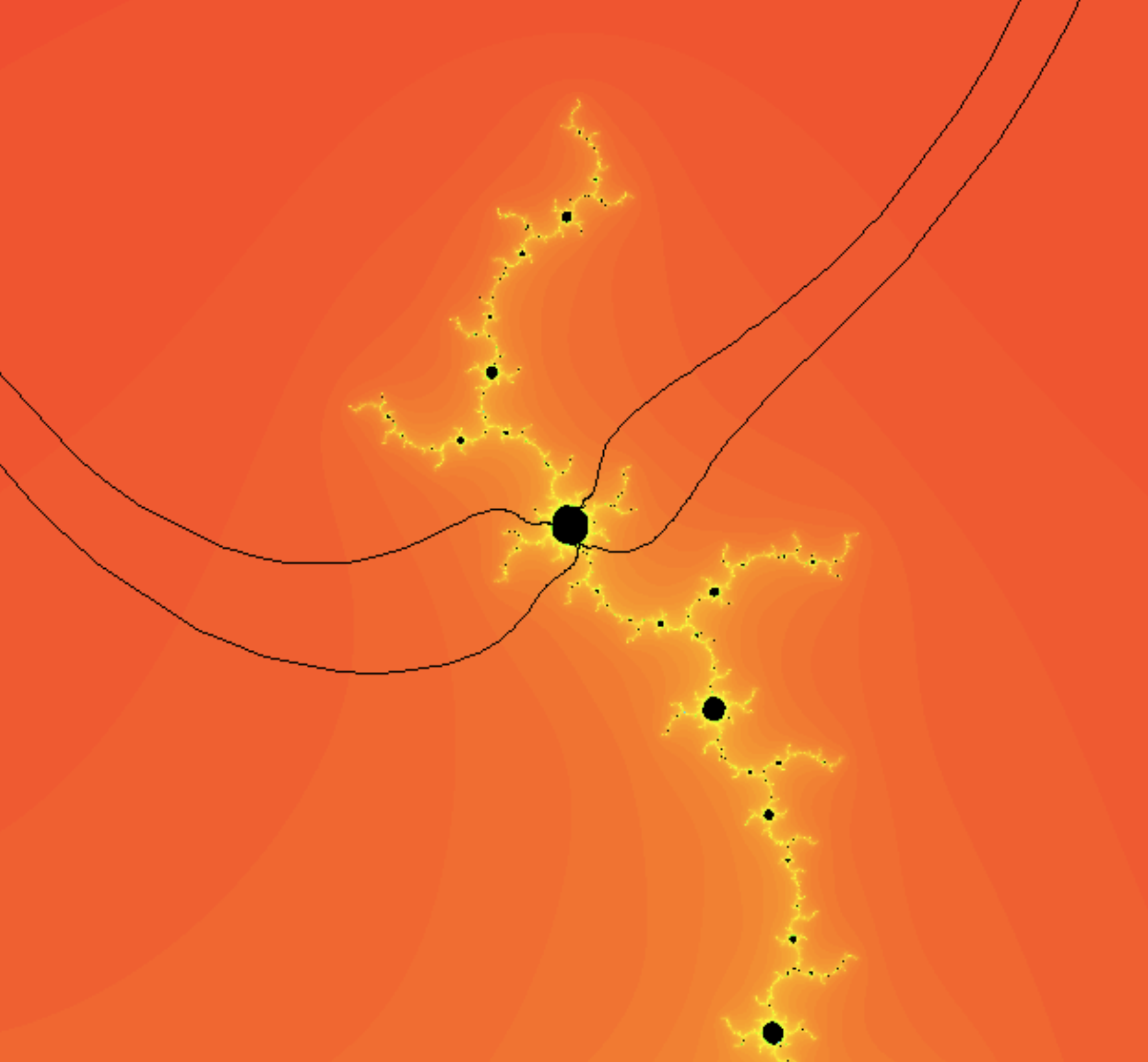}
\Includegraphics{height=50mm,trim=0 35 0 40,clip}{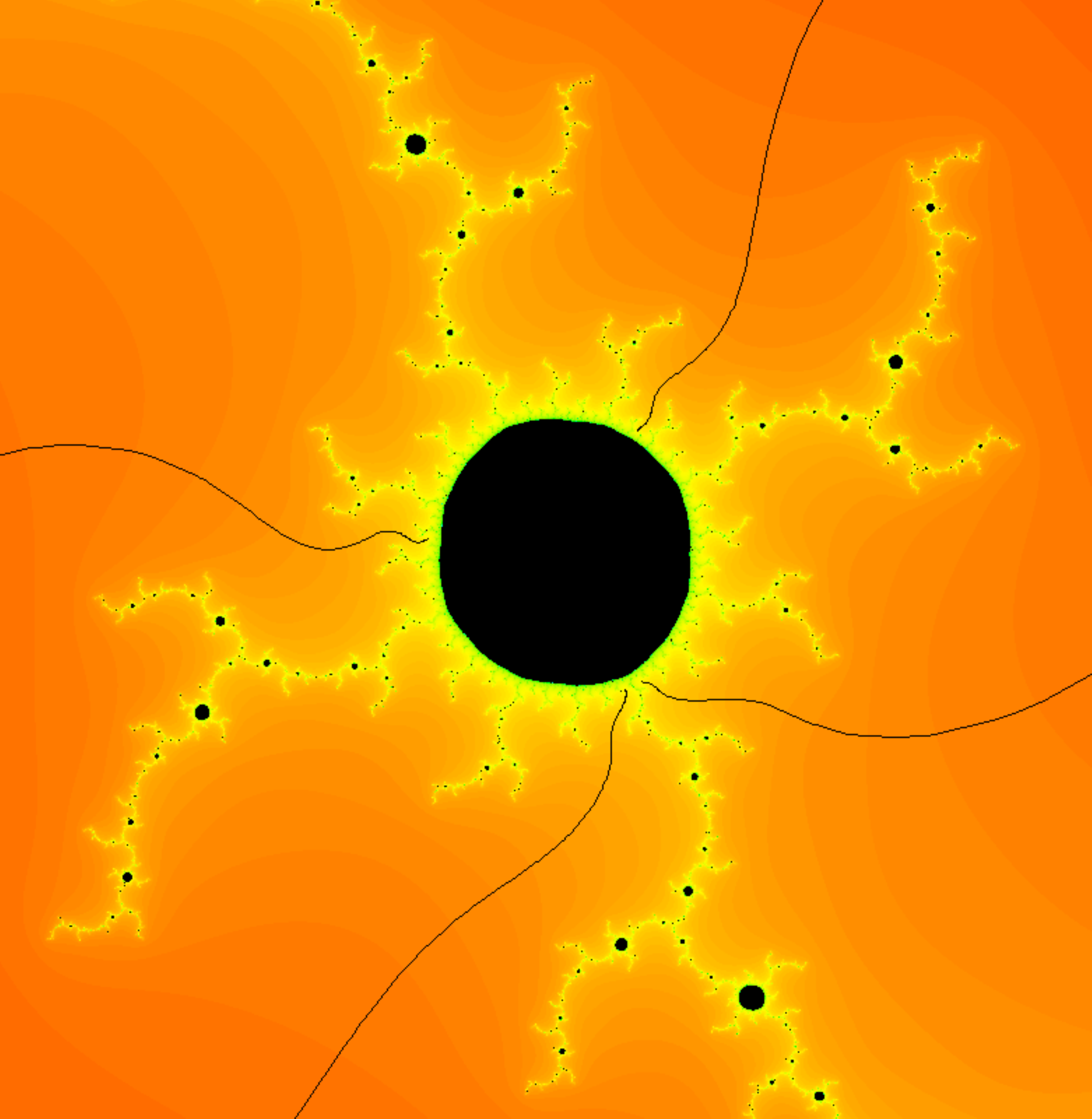}
\Includegraphics{height=50mm,trim=50 0 60 0,clip}{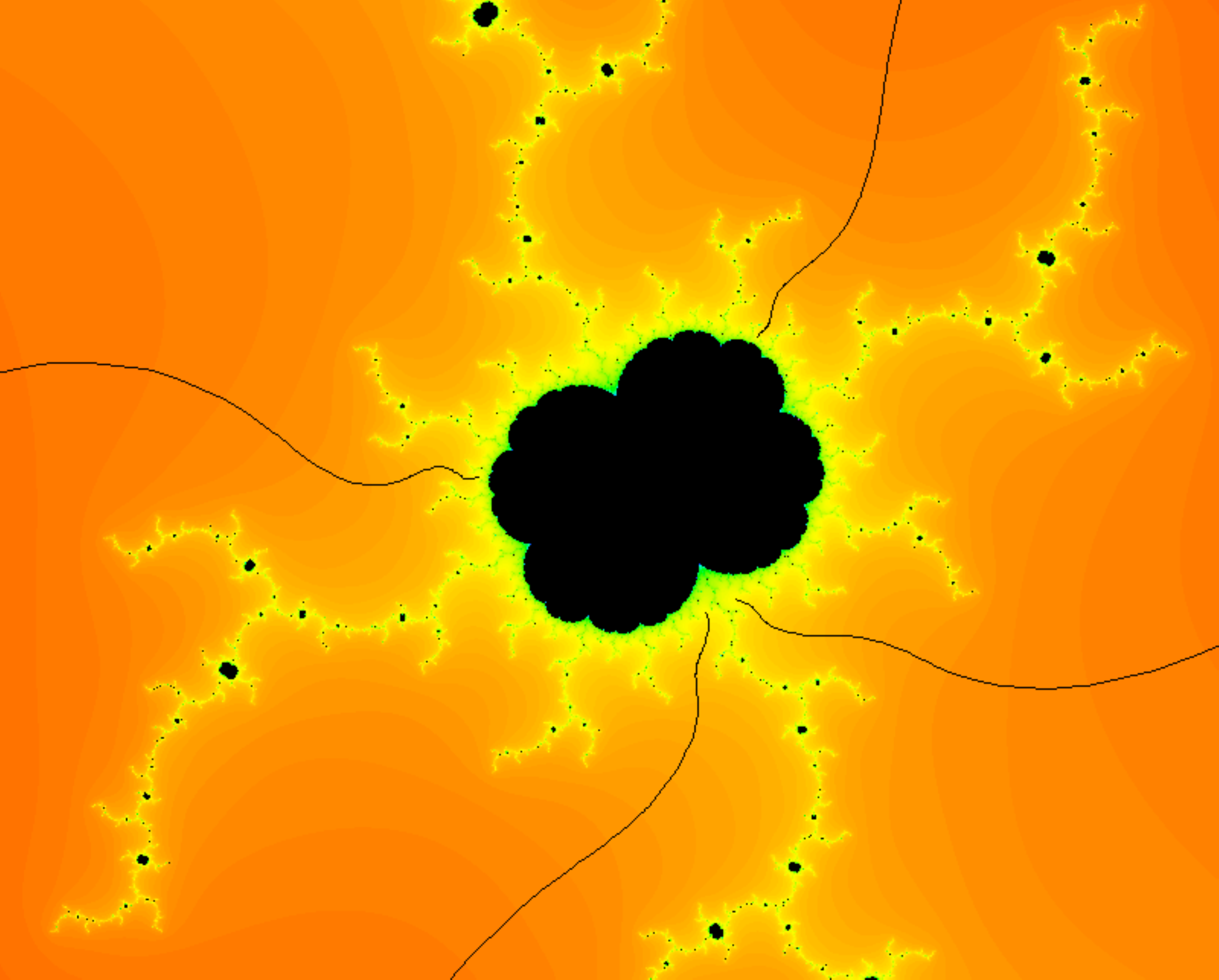}
\Includegraphics{height=50mm,trim=0 20 0 0,clip}{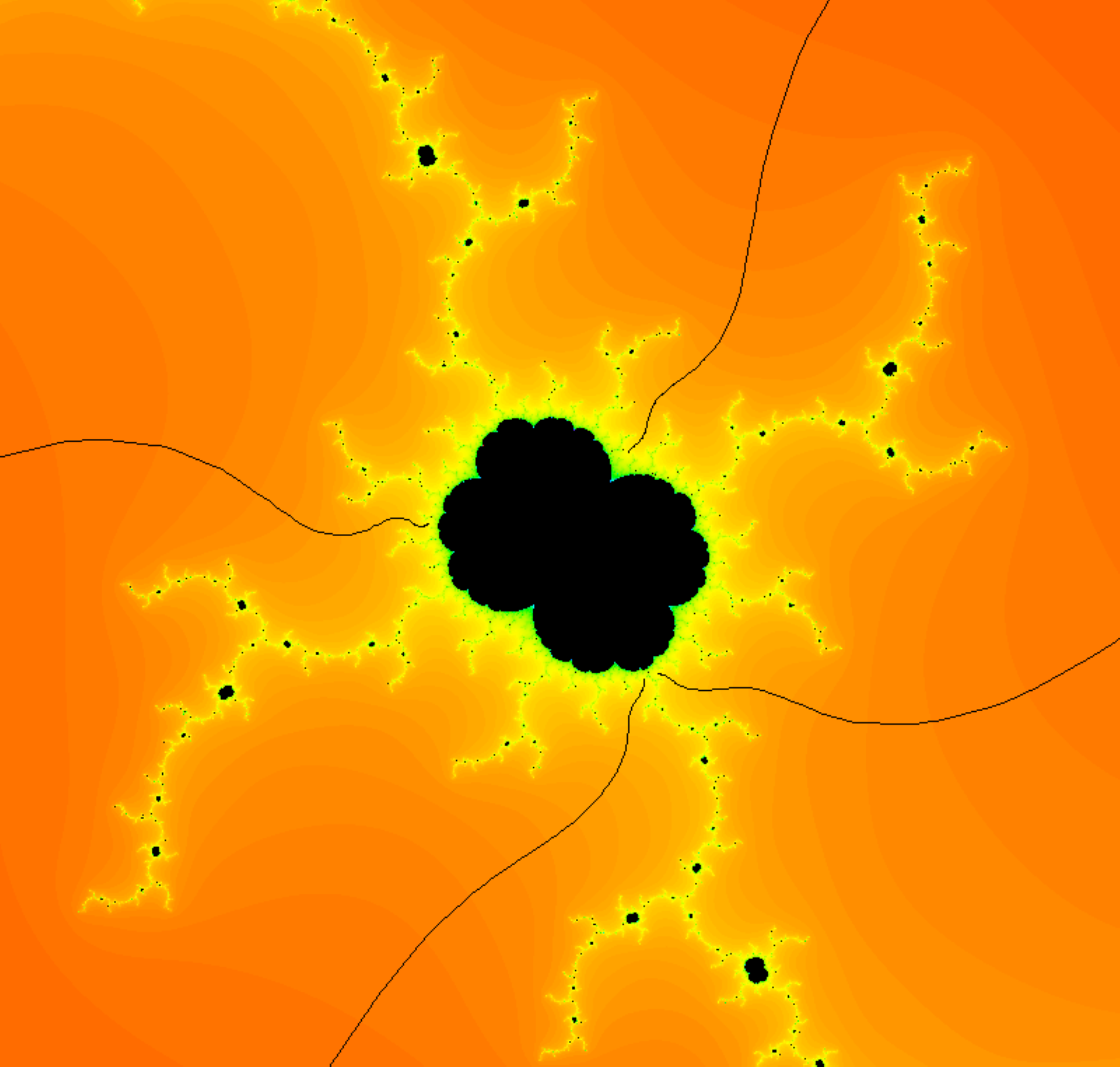}
\Includegraphics{height=50mm,trim=50 0 0 0,clip}{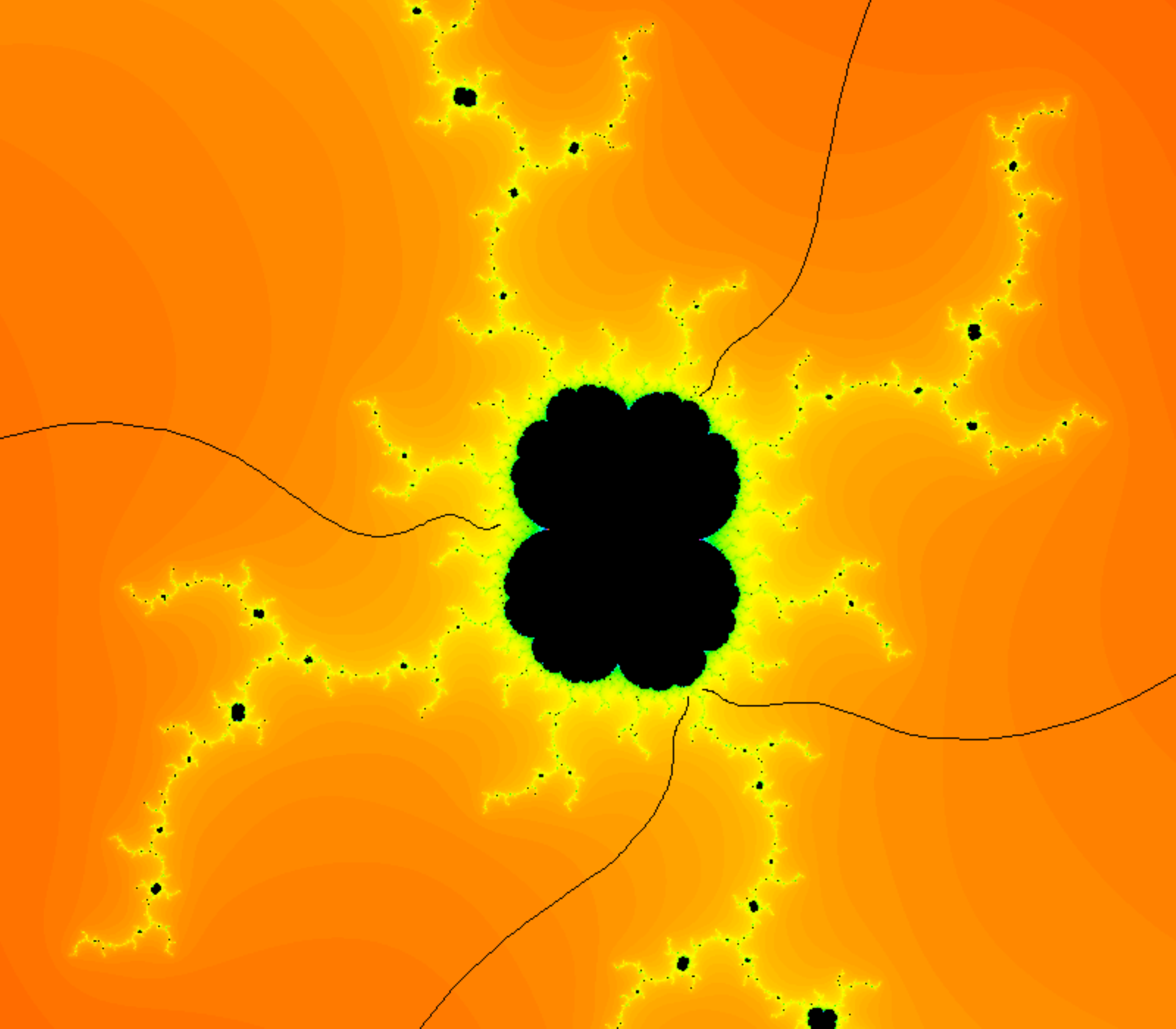}
\caption{An antiholomorphic map $p_c(z)=\zbar^2+c$ of degree $d=2$ 
with an attracting cycle of period $5$. The Fatou component 
containing the critical value has $d+1=3$ boundary points that are 
fixed under $p_c^{\circ 5}$, and together these are the landing 
points of $d+2=4$ dynamic rays: the dynamic root is the landing point 
of $2$ rays (here, at angles $371/1023$ and $404/1023$ of period 
$10$, and the two dynamic $2$-roots are the landing points of one ray 
each (at angles $12/33$ and $13/33$ of period $5$). Upper left: the entire 
Julia set with the four rays indicated. Upper right: blow-up of a 
neighborhood of the critical value Fatou component where the four 
rays can be distinguished. The hyperbolic component containing the 
parameter $c$ is bounded by $d+1$ parabolic arcs (see 
Figure~\ref{Fig:BigAndLittleTricorn}): one arc contains the 
accumulation set of the parameter rays at angles $12/33$ and $13/33$ 
(the root arc), and the other two arcs contain the accumulation sets 
of one parameter ray each (at angles $12/33$ and $13/33$ 
respectively). The four remaining pictures show further blow-ups near the critical value, for parameters at the center (middle row, left), from the parabolic root arc (middle right) and from the two parabolic co-root arcs (bottom row).}
\label{Fig:PrincipalNonPrincipalParabolics}
\end{figure}

\begin{remark}
As proved in \cite{Multicorns2}, every hyperbolic component $W$ of 
odd period $k$ in the Multicorn $\Md$ has a Jordan curve boundary consisting of 
exactly $d+1$ parabolic arcs and $d+1$ parabolic cusps where the arcs 
meet in pairs. Suppose $k\ge 3$. For each $c\in W$, each periodic 
bounded Fatou component has exactly $d+1$ boundary points that are 
fixed under the first return map of the component, and these points 
together are the landing points of $d+2$ periodic dynamic rays: one 
boundary fixed point is the landing point of two rays, both of period 
$2k$, and called the \emph{dynamic root} of the component, and the 
other boundary fixed points are the \emph{dynamic co-roots} and 
landing points of one ray each, of period $k$. Specifically for the 
critical value Fatou component, the two rays landing at the dynamic 
root separate this Fatou component and its co-roots from the entire 
critical orbit except the critical value (see Figure~\ref{Fig:PrincipalNonPrincipalParabolics}).
For each $c\in W$, the set of dynamic root and co-roots of the Fatou 
component containing the critical value is in natural bijection to 
the parabolic boundary arcs of $W$: at each of the $d+1$ boundary 
arcs of $W$, a different one of the dynamic roots or co-roots becomes 
parabolic. The \emph{parabolic root arc} is the arc at which the 
dynamic root becomes parabolic, while the $d$ \emph{co-root arcs} are 
those where one the $d$ co-roots becomes parabolic. Principal 
parabolic maps $p_c$ are thus maps from the root arc, and they exist 
on the boundary of each odd period component.
At a parabolic cusp, a dynamic root or co-root merges with one of its 
adjacent dynamic (co-)roots. Specifically at a cusp at the end of the 
root arc, the dynamic root merges with a co-root: at such parameters, 
each parabolic periodic point is the landing point of two rays of 
period $2k$ and one ray of period $k$.

\end{remark}

\begin{definition}[Parabolic Tree]
If $p_c$ has a principal parabolic orbit of odd period $k$, we define its 
\emph{parabolic tree} as the unique minimal tree within the filled-in 
Julia set that connects the parabolic orbit and the critical orbit, 
so that it intersects the critical value Fatou component along a 
simple $p_c^{\circ k}$-invariant curve connecting the critical value 
to the characteristic parabolic point, and it intersects any other 
Fatou component along a simple curve that is an iterated preimage of 
the curve in the critical value Fatou component. A \emph{loose 
parabolic tree} is a tree that is homotopic to the parabolic tree, by 
a homotopy that fixes the Julia set (so it acts separately on bounded 
Fatou components). It is easy to see that the parabolic tree 
intersects the Julia set in a Cantor set, and these points of 
intersection are the same for any loose tree (not that for simple parabolics, any two periodic Fatou components have disjoint closures).
\end{definition}

This tree is defined in analogy to the Hubbard tree for 
postcritically finite polynomials. In our case, note first that the 
filled Julia set is locally connected hence path connected, so any 
minimal tree connecting the parabolic orbit is uniquely defined up to 
homotopies within bounded Fatou components. The parabolic tree is 
$p_c$-invariant (this is clear by construction separately in the 
Julia set and in the Fatou set). A simple standard argument 
(analogous to the postcritically finite case) shows that the critical 
value Fatou component has exactly one boundary point on the tree (the 
characteristic parabolic point), and all other bounded Fatou 
components have at most $d$ such points (the preimages of the 
characteristic parabolic point). The critical value is an endpoint of 
the parabolic tree. All branch points of the parabolic tree are 
either in bounded Fatou components or repelling (pre)periodic points; 
in particular, no parabolic point (of odd period) is a branch point.

\begin{definition}[OPPPP: Odd Period Prime Principal Parabolic]
A principal parabolic map $p_c$ with a parabolic orbit of odd period 
$k\ge 3$ is called \emph{prime} if the parabolic tree does not have 
any proper connected subtree that connects at least two Fatou 
components and that is invariant under some iterate of $p_c$.
\end{definition}

Parabolics with these properties will be called OPPPP-parabolics, and 
these are the ones that we will work with.

\begin{remark}
The condition of ``prime'' can be motivated informally as follows.
Just like the Mandelbrot set contains countably many ``little 
Mandelbrot set'', it is experimentally observed (but not yet formally 
proved) that
the multicorn $\Md$ contains countably many ``little multicorns'', 
finitely many for each odd period $k\ge 3$ (these are combinatorial 
copies, not homeomorphic copies, for reasons we mentioned in the 
introduction); the period $n$ means that periods of hyperbolic 
components in the combinatorial copy are $n$ times the original 
periods. There is a natural map from the little multicorn onto $\Md$ 
that is given by an antiholomorphic version of the straightening 
theorem, but it is not necessarily continuous. Each little multicorn, 
say of period $k$, contains in turn countably many little multicorns, 
and all their periods are multiples of $k$. Under tuning (the inverse 
of straightening), the little multicorns thus form a semi-group (see 
Milnor \cite{MilnorHairiness}), and a ``prime'' multicorn is one that 
cannot be written as a composition of other small multicorns.
A map $p_c$ with an attracting or parabolic orbit of odd period $k$ 
is prime if the parameter $c$ is from the closure of the main 
hyperbolic component of a prime multicorn.
\end{remark}

Formally speaking, any map $p_c$ with a parabolic orbit of odd period 
$k\ge 3$ is clearly prime if the period $k$ is prime (it may or many 
not be prime otherwise). This establishes the existence of infinitely 
many OPPPP parabolics, using the existence of hyperbolic components of 
all periods.

Concerning the latter, define a sequence 
$s_{d,k}:=d^{k-1}-\sum_{m|k,\, m<n}s_{d,m}$ for each $d\ge 2$. Then 
$s_{d,k}$ is the number of hyperbolic components of period $k$ for 
the ``Multibrot sets'' of period $d$: each hyperbolic component of 
period $k$ has a center parameter that satisfies 
$((c^d+c)^d+c\dots+c)=0$, and dividing out solutions for periods $k$ 
strictly dividing $n$ we obtain the given recursive formula. It turns 
out that the number of hyperbolic components of the multicorns $\Md$ 
of period $k$  also equals $s_{d,k}$, except if $k$ is twice an odd 
number: in the latter case, the number of hyperbolic components 
equals $s_{d,k}+2s_{d,k/2}$ (Nakane and Schleicher, unpublished).

In order to reassure readers concerned that we might be talking about 
the empty set, here is a simple existence argument.
\begin{lemma}[Existence of Hyperbolic Components]
Every multicorn $\Md$ has hyperbolic components of all odd periods.
\end{lemma}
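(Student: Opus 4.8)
The plan is to count centers of hyperbolic components of odd period $n$ and show the count is positive. A center of a hyperbolic component of period $n$ for $p_c(z)=\zbar^d+c$ is a parameter $c$ for which the critical orbit is periodic of period exactly $n$, i.e.\ $p_c^{\circ n}(0)=0$ but $p_c^{\circ m}(0)\neq 0$ for proper divisors $m\mid n$. Since $p_c^{\circ 2}$ is holomorphic in $z$ and a polynomial in $c$ of degree $d$ in the top term, the equation $p_c^{\circ 2n}(0)=0$ is a polynomial equation in $c$ (of degree $d^{2n-1}$), while $p_c^{\circ n}(0)=0$ is a polynomial equation in $c$ and $\cbar$; for $n$ odd these conditions cut out finitely many parameters. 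First I would set up this algebra carefully: fixing the critical value $v=c$, the recursion $v_0=c$, $v_{j+1}=\ovl{v_j}^d+c$ shows $v_n$ is a polynomial of degree $d^{n-1}$ in $c$ over the variables $c,\cbar$, and the centers of odd period $n$ are the solutions of $v_n=0$ (equivalently $v_n=c$ wait --- the critical point is $0$, so the condition is $p_c^{\circ n}(0)=0$, giving a polynomial $P_n(c,\cbar)=0$).

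The cleanest route to \emph{existence}, rather than exact counting, is to use the dynamical/combinatorial structure already available: parameter rays and the fact that $\Md\cap\R$ contains the interval governing the real antiholomorphic family, which coincides with the real Mandelbrot/multibrot family on the real axis (the even iterates agree). On the real axis, $p_c(z)=\zbar^d+c=z^d+c$ for real $z,c$, so the real slice of $\Md$ is exactly the real slice of the degree-$d$ multibrot set. The multibrot set of degree $d$ has hyperbolic components of all periods along the real axis (this is classical: real unimodal/polynomial dynamics via kneading theory, or simply the existence of superattracting real cycles of all periods for $z^d+c$ with $c\in\R$ — e.g.\ period-doubling and the existence of a period-$3$ window implies, by Sharkovskii-type arguments in the degree-$2$ case, components of all periods, and for general $d$ one has analogous real-polynomial results). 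In particular there are real parameters $c$ with a superattracting cycle of any odd period $k$; such $c$ is the center of an odd-period hyperbolic component of $\Md$.

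So the key steps, in order, are: (1) recall that $\Md\cap\R$ equals the real slice of the degree-$d$ multibrot set, because $p_c$ restricted to $\R$ is $z\mapsto z^d+c$; (2) invoke the known existence of real superattracting cycles of every odd period for the real polynomial family $z\mapsto z^d+c$ (via real one-dimensional dynamics / kneading theory, or the intermediate value theorem applied to $c\mapsto p_c^{\circ k}(0)$ to locate a sign change forcing a root, combined with monotonicity arguments to separate periods); (3) observe that at such a parameter the full antiholomorphic map $p_c$ has the same (super)attracting periodic orbit — the orbit lies on $\R$ and the dynamics there is literally the polynomial dynamics — hence $c$ is the center of a hyperbolic component of $\Md$ of that odd period. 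The main obstacle is step (2): making the existence of real superattracting cycles of \emph{all} odd periods fully rigorous for general $d$. For $d=2$ this is standard (Milnor–Thurston kneading theory gives all periods; alternatively a period-$3$ orbit plus Sharkovskii). For general $d\ge 2$ one can argue that $c\mapsto p_c^{\circ k}(0)$ changes sign enough times on a suitable real interval (using that for $c$ near the tip of the real multibrot interval the critical orbit is very ``spread out'') to produce roots, and then use that distinct odd periods $k$ give genuinely distinct components by comparing, e.g., the number of laps of $p_c^{\circ k}$ or the combinatorial rotation data; the cleanest citation is to the general theory of real polynomial maps, but a self-contained argument via counting sign changes of the iterated critical value as a real-analytic function of $c$ is also feasible and is presumably what the authors do.
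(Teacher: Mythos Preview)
Your approach has a genuine gap that breaks it for all odd degrees $d$. You claim that the real polynomial family $z\mapsto z^d+c$ has superattracting cycles of every odd period, invoking Sharkovskii/kneading theory. But for odd $d$ the map $x\mapsto x^d+c$ is a \emph{strictly increasing} homeomorphism of $\R$, so every orbit is monotone and there are no periodic points of period greater than $1$. Consequently the real slice of the degree-$d$ multibrot set (and hence $\Md\cap\R$) meets only the period-$1$ hyperbolic component, and your step~(2) is simply false in this case. The rotated symmetry axes of $\Md$ do not help: for instance for $d=3$ the imaginary axis is invariant and the induced real map is again $s\mapsto s^3+t$, still monotone. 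Your argument is salvageable only for even $d$, where $z^d+c$ is genuinely unimodal on $\R$.

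The paper's proof is entirely different and works uniformly in $d$. Given an odd $k$, take an angle $\phi=s/(d^k+1)$ of exact period $k$ under $\theta\mapsto -d\theta$, and consider the parameter ray $R(\phi)$. By Nakane's connectedness theorem this ray accumulates on $\partial\Md$; pick any accumulation point $c$. In the dynamics of $p_c$ the dynamic ray at angle $\phi$ lands at a repelling or parabolic periodic point. If repelling, stability of the ray and its landing point under perturbation contradicts the fact that nearby parameters on $R(\phi)$ have the critical value on that dynamic ray. Hence the landing point is parabolic, of some odd period $k'$; since rays landing at an odd-period parabolic have period $k'$ or $2k'$ and $\phi$ has odd period $k$, one gets $k'=k$. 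Then Lemma~\ref{Lem:ParabolicsOnBoundary} places $c$ on the boundary of a period-$k$ hyperbolic component. This parameter-ray argument sidesteps the real axis altogether and is what you should aim for.
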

\begin{proof}
Let $k$ be an odd number and let $\phi\in\R/\Z$ be an angle with 
period $k$ under multiplication by $-d$ modulo $1$; i.e., 
$\phi=s/(d^k+1)$ for some $s\in\Z$. The parameter ray $R(\phi)$ at 
angle $\phi$ is defined as the set of parameters $c\in\C\sm\Md$ for 
which the critical value is on the dynamic ray at angle $\phi$ (and 
escapes to $\infty$). In \cite{Nakane1} it was shown that $\Md$ is 
connected, and in particular that $R(\phi)$ is a curve in 
$\C\sm\Md$ that converges to $\infty$ in one direction, and that 
accumulates at $\partial\Md$ in the other direction. Let 
$c\in\partial\Md$ be any accumulation point of $R(\phi)$; note that 
we do \emph{not} claim that $R(\phi)$ has a well-defined limit 
point in $\partial\Md$ (it usually will not), but its accumulation 
set is non-empty.

In the dynamics of $p_c$, the filled-in Julia set is connected, and 
the dynamic ray at angle $\phi$ lands at a periodic point that is 
repelling or parabolic. If the landing point is repelling, then 
stability under small perturbations assures that for parameters $c'$ 
near $c$, the dynamic ray at angle $\phi$ lands at a repelling 
periodic point, and ray and landing point depend continuously on 
$c'$. But since the critical value has positive distance from ray and 
endpoint, this will remain so under perturbations, and this is a 
contradiction (compare \cite[Lemma~B.1]{GoldbergMilnor}).

Therefore, for $p_c$, the dynamic ray at angle $\phi$ lands at a 
parabolic periodic point. Let $k'$ be the period of the parabolic 
orbit. Since $k'$ is odd, the rays landing at this orbit must have 
period $k'$ or $2k'$ \cite[Lemma~3.1]{Multicorns1}. This implies 
$k=k'$, so $c$ is on the boundary of a hyperbolic component of period 
$k$ (Lemma~\ref{Lem:ParabolicsOnBoundary}).
\end{proof}

\begin{lemma}[Analytic Arc Only for Real Parameters]
\label{Lem:AnalyticArcIsReal}
Suppose the filled-in Julia set of an OPPPP parabolic map $p_c$ 
contains a simple analytic arc that connects two bounded Fatou 
components. If the critical value has Ecalle height zero, then $p_c$ 
is conformally conjugate to a real map $p_{c'}$ (i.e., $c'\in\R$).
\end{lemma}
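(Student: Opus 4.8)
The plan is to exploit the rigidity that comes from having an analytic arc in the filled Julia set together with the antiholomorphic symmetry. The starting point is that an OPPPP parabolic map has a well-defined parabolic tree, and the hypothesis gives us an analytic arc $\alpha$ in the filled Julia set joining two bounded Fatou components. First I would show that $\alpha$ must coincide (up to homotopy rel Julia set) with an arc of the loose parabolic tree: since the tree is the minimal connected set joining the critical and parabolic orbits, and since it is characterized combinatorially, any analytic arc between two Fatou components forces a corresponding segment of the tree to be analytic; in particular the restriction of some iterate $p_c^{\circ m}$ to (a subarc of) $\alpha$ gives an analytic self-map with analytic invariant arc, and by the ``prime'' hypothesis this iterate can be taken to involve the critical-value Fatou component and the characteristic parabolic point.

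The heart of the argument is a local analysis at the characteristic parabolic point $z_0$ (or rather at the parabolic periodic orbit), using the Ecalle cylinder picture from Lemma~\ref{LemEcalleCylinders} and the Proposition on parabolic arcs. The invariant curve in the critical value Fatou component lands at $z_0$, and the hypothesized analytic arc exits $z_0$ on the Julia-set side into the outgoing petal. In Ecalle (Fatou) coordinates for $p_c^{\circ 2k}$, an analytic invariant arc through the parabolic point that is also respected by the antiholomorphic iterate $p_c^{\circ k}$ must map, in the cylinder $\C/\Z$, to a curve invariant under $w\mapsto \ovl w+1/2$; the only such \emph{geodesic} is the equator, at Ecalle height $0$. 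Since the critical value has Ecalle height $0$ by hypothesis, the invariant curve in the critical value component lands at $z_0$ \emph{along the equator}, i.e.\ at Ecalle phase matching that of the critical value, and then the analytic arc on the Julia-set side is forced to be the image of this equator under the transit/outgoing identification. The key point is that an analytic arc landing at a parabolic point must be asymptotic to an attracting or repelling direction, and analyticity plus invariance pins down its germ completely; combined with Ecalle height $0$ this says the whole configuration (critical orbit, tree, parabolic orbit) is symmetric under an antiholomorphic involution of the dynamical plane fixing the arc.

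From that symmetry I would conclude the statement. The analytic arc $\alpha$, being invariant under some iterate and landing at the equator, together with its forward and backward images under $p_c$, cuts out an antiholomorphic involution $\tau$ of $\C$ (reflection in $\alpha$ suitably propagated through the Fatou components and the Julia set) that conjugates $p_c$ to itself. Any antiholomorphic involution of $\C$ that conjugates a polynomial-type map to itself is, after an affine change of coordinates, complex conjugation; hence $p_c$ is conformally conjugate to a map commuting with $z\mapsto\ovl z$, and for the normalized family $p_c(z)=\ovl z^d+c$ this forces $c\in\R$ (indeed $c=c'$ after the conjugacy). The details require checking that $\tau$ is globally well-defined and continuous across the Julia set — this is where local connectivity of the filled Julia set (already noted for simple parabolics) and the fact that a loose tree meets the Julia set in a Cantor set are used.

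**Main obstacle.** The hard part will be the rigidity step: showing that an analytic invariant arc landing at the parabolic point, under the Ecalle-height-zero hypothesis, is forced to be the equator and hence forces a global antiholomorphic symmetry. One must rule out an analytic arc that lands at the parabolic point but at a ``wrong'' phase or that spirals; analyticity helps, but transferring the local symmetry through all the preimages of the critical value component and gluing consistently across the Cantor set of Julia-set points to get a \emph{global} involution is the delicate point, and is presumably where the ``prime'' (OPPPP) hypothesis does real work by preventing the arc from belonging to a smaller invariant subtree with incompatible symmetry.
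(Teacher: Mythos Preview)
Your proposal diverges from the paper's proof in a substantive way, and the route you sketch has a real gap.

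\textbf{Where your argument breaks.} Your central rigidity step is the claim that an analytic $p_c^{\circ k}$-invariant arc through the characteristic parabolic point must project, in the outgoing Ecalle cylinder, to the equator. But analyticity of the arc in the $z$-plane says nothing about its image in Fatou coordinates being a Euclidean geodesic of $\C/\Z$; an analytic curve can perfectly well project to a wiggly (but still analytic) closed curve invariant under $w\mapsto\ovl w+1/2$ without being the horizontal circle at height $0$. You slip in the word ``geodesic'' without justification, and that is exactly the missing ingredient. Consequently the local Schwarz reflection you want does not obviously match the canonical involution $w\mapsto\ovl w$ of the cylinder, and your plan to ``propagate'' a local reflection in the arc to a global antiholomorphic involution $\tau$ of $\C$ commuting with $p_c$ is not well-defined: Schwarz reflection across a compact analytic arc exists only in a tubular neighborhood, and extending it dynamically would require exactly the kind of global compatibility you are trying to prove. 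You also tacitly assume the given arc is invariant under $p_c^{\circ k}$; the hypothesis only gives an analytic arc between two Fatou components, and invariance must be earned.

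\textbf{What the paper does instead.} The paper never argues that the arc lies on the equator, and it never builds an involution of the $p_c$-plane. Its key structural step is different: iterate the given analytic arc forward, truncate suitably, and use the \emph{prime} hypothesis to show that $p_c^{\circ k}(J_1)\supset J_1$, so that the iterated images exhaust the parabolic tree. The upshot is that the \emph{entire parabolic tree is an unbranched simple analytic arc}. With the tree unbranched, its combinatorics are invariant under complex conjugation, and the paper then constructs a conformal conjugacy between $p_c$ and $p_{\ovl c}$ directly: the Ecalle-height-zero hypothesis gives a conformal conjugacy on the incoming Ecalle cylinders respecting the critical orbits; this pulls back to the parabolic basin, extends over the filled Julia set by local connectivity (here the unbranched tree is essential), and extends over the basin of infinity via the Riemann map. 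A pull-back argument with uniformly bounded quasiconformal dilatation then shows the Julia set is holomorphically removable, so the conjugacy is globally conformal. Finally, $p_c$ conformally conjugate to $p_{\ovl c}$ forces $c=\zeta^s\ovl c$ for a $(d{+}1)$-st root of unity $\zeta$, hence $p_c$ is conformally conjugate to a real $p_{c'}$.

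So the decisive idea you are missing is: use the analytic arc plus primeness to force the whole tree to be unbranched, and then compare $p_c$ with $p_{\ovl c}$ rather than trying to build an internal symmetry of the $p_c$-plane.
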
 
\begin{proof}
Let $k$ be the period of the parabolic orbit, and let $z_1$ be the 
characteristic point on this orbit. Since the parabolic orbit is simple, any two bounded Fatou components have disjoint closures, so the analytic arc must traverse infinitely many bounded Fatou components.
Iterating the analytic arc 
forward finitely many times and cutting at the critical point if 
necessary, we obtain a simple analytic arc connecting $z_1$ to some 
other bounded Fatou component that intersects the parabolic tree. 
Truncate if necessary so that the arc does not meet any branch point 
of the parabolic tree, nor any bounded Fatou component that contains 
a branch point, but so that it still connects $z_1$ to some other 
bounded Fatou component, and so that all Fatou components that this 
arc intersects take more than $k$ iterations to reach the critical 
value Fatou component. Call this piece of analytic arc $J_1$. Then 
$p_c^{\circ k}\colon J_1\to p_c^{\circ k}(J_1)=:J_2$ is an analytic 
diffeomorphism between simple analytic arcs.

The arcs $J_1$ and $J_2$ are parts of the parabolic tree, except for 
homeomorphisms within bounded Fatou components (so they are part of a 
loose parabolic tree). They both start at $z_1$, which is not a branch point of the parabolic tree, so they must coincide at a Cantor set of points in the Julia set. As analytic arcs, they must thus coincide (except for truncation). It follows that one of the two arcs $J_1$ and 
$J_2$ is a sub-arc of the other. 
If $J_2\subset J_1$, then $p_c$ is 
not prime, so $J_2\supset J_1$ and hence $J_{n+1}:=p_c^{\circ 
k}(J_n)\supset J_n$ for all $n$. Again by definition of being prime, 
there is some $N$ so that $J_N$ covers the entire parabolic tree.

As long as $p_c^{\circ nk}\colon J_1\to J_{n+1}$ is a homeomorphism, 
the image is a simple analytic arc. If during the iteration, the critical 
point is covered, the $p_c$-image will contain the critical value, 
but this cannot introduce any branching: suppose $J=p_c^{\circ m}(J_1)$ is a simple analytic arc that contains the critical point $0$ in the interior and let $J'$ and $J''$ be the components of $J\sm\{0\}$. Then $p_c(J')$ and $p_c(J'')$ both start at the critical value and have $z_1$ as an interior point, so as above they must coincide in a neighborhood of $z_1$; hence $p_c(J')\cup p_c(J'')=p_c(J)$ is again a simple analytic arc. Therefore, all $J_n$ are simple analytic arcs, and the same holds for the parabolic tree, which equals $J_N$. The parabolic tree thus is unbranched.


Now we claim that $p_c$ is conformally conjugate to its complex 
conjugate $p_{\ovl c}$ (they are obviously conjugate by an \emph{anti}conformal homeomorphism, but we want a conformal conjugation). 
The condition of Ecalle height zero implies that $p_c^{\circ k}$ and $p_{\cbar}^{\circ k}$ are conformally conjugate on their incoming Ecalle cylinders, respecting the critical orbits. This conjugation can be pulled back to the incoming petal of the parabolic orbit and thus to their periodic Fatou components. It follows from local connectivity that this conformal conjugation on each individual Fatou component extends homeomorphically to the closure of the component.

The next step is to extend this conjugation homeomorphically to the filled-in Julia set, again using local connectivity. This is possible because the parabolic trees are unbranched, so their combinatorial structure is unaffected by complex conjugation. 

Finally, we extend the conjugation to the basin of infinity, using the Riemann map between the basins of infinity so that $\infty$ is fixed. There are $d+1$ choices for this conjugation near $\infty$, and one of them maps the rays landing at the parabolic orbit to the rays landing at the parabolic orbit (this is possible because we already know that the dynamics on the Julia set is conjugate); for this map, the extension to the boundary coincides with the conjugation on the Julia set we already have.

This way, we obtain a topological conjugation $h\colon\C\to\C$ between $p_c$ and $p_{\cbar}$ that is conformal away from the Julia set. If we know that the Julia set is holomorphically removable, then we have a conformal conjugation on $\C$. This fact can be established directly without too much effort. Consider the equipotential $E$ of $p_c$ at some positive potential, and let $E_1,\dots,E_k$ be piecewise analytic simple closed curves, one in each bounded periodic Fatou component of $p_c$, that surround the postcritical set in their Fatou components and that intersect the boundary of their Fatou components in one point, which is on the parabolic orbit. Let $V_0$ be the domain bounded on the outside by $E$ and on the inside by the $E_i$. Then there is a quasiconformal homeomorphism $h_0\colon \C\to\C$ with $h_0=h$ on $\C\sm V$ (i.e., the homeomorphism $h$ is modified on $V_0$ so as to become quasiconformal, possibly giving up on the condition that $h_0$ is a conjugation on $V_0$). 

Now construct a sequence of quasiconformal homeomorphisms $h_n\colon\C\to\C$ as a sequence of pull-backs, satisfying $p_{\cbar}\circ h_{n+1}=h_n\circ p_c$: the construction assures that this is possible, and all $h_n$ satisfy the same bounds on the quasiconformal dilatation as $h_0$. Moreover, the support of the quasiconformal dilatation shrinks to the Julia set, which has measure zero. By compactness of the space of quasiconformal maps with given dilatation, the $h_n$ converge to a conformal conjugation between $p_c$ and $p_{\cbar}$. This limiting conjugation must coincide with $h$ on the Fatou set, so the Julia set is holomorphically removable as claimed.

Finally, since $p_c$ and $p_{\ovl c}$ are conformally conjugate, we have 
$c=\zeta^s \ovl c$ where $\zeta$ is a complex $d+1$-st root of unity 
and $s\in\Z$, so writing $c=re^{2\pi i\phi}$ it follows that 
$\phi=-\phi+s/(d+1)$ or $\phi\in \Z/2(d+1)$. Since $p_c$ is 
conformally conjugate to $p_{c'}$ with $c'=c\zeta^{s'}$ for 
$s'\in\Z$, we may add $s'/(d+1)$ to $\phi$, so $\phi\in\{0,1/2\}$, 
and this means that $p_c$ is conformally conjugate to a real map.
\end{proof}

\begin{remark}
From the statement that the parabolic tree is unbranched, there is an alternative argument that $p_c$ is conformally conjugate to $p_{\ovl c}$.

We can modify the parabolic tree topologically into a superattracting tree. The map on this tree extends to a postcritically finite orientation-reversing branched mapping whose combinatorial equivalence class is well defined, and that is obviously combinatorially equivalent to its complex conjugate.

We can then apply Thurston's theorem (in fact, just Thurston rigidity) to claim that the corresponding superattracting antipolynomial $p_{c'}$ is conformally conjugate on $\C$  to its complex conjugate. This says that up to conjugacy, the tricorn with $p_{c'}$ at its center can be taken to be on the real axis, hence also the point on the principal boundary arc at Ecalle height $0$.

We give the more elementary but somewhat tedious argument above to avoid Thurston's theorem for orientation-reversing branched maps, though the result is true and requires almost no modifications in the proof. Similarly, one can use the methods of ``Posdronasvili'' \cite{Orsay} or of Poirier \cite{Poirier} to prove that any two postcritically finite polynomials with unbranched Hubbard trees having identical combinatorics are conformally conjugate.

\end{remark}

\begin{lemma}[Approximating Ray Pairs]
\label{Lem:ApproxRayPairs}
Every OPPPP parabolic map $p_c$ with Ecalle height zero is either 
conformally conjugate to a map $p_{c'}$ with real parameter $c'$, or 
the characteristic parabolic point $z_1$ is the limit of repelling 
preperiodic points $w_n$ and $\tilde w_n$ on the parabolic tree so 
that all $w_n$ have Ecalle heights $h>0$ and all $\tilde w_n$ have 
Ecalle heights $-h$, and with the following property: if $\phi$ and 
$\phi'$ are the external angles of the dynamic rays landing at $z_1$, 
then dynamic rays at angles $\phi_n$ and $\phi'_n$ land at $w_n$ so 
that $\phi_n\to\phi$ and $\phi'_n\to\phi'$; similarly, dynamic rays 
at angles $\tilde \phi_n$ and $\tilde \phi'_n$ land at $\tilde w_n$ 
with $\tilde\phi_n\to\phi$ and $\tilde\phi'_n\to\phi'$.
\end{lemma}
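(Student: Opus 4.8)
The plan is to assume that $p_c$ is \emph{not} conformally conjugate to a real map and derive the existence of the approximating ray pairs. By Lemma~\ref{Lem:AnalyticArcIsReal} (contrapositive), the filled-in Julia set of $p_c$ contains no simple analytic arc connecting two bounded Fatou components; in particular the parabolic tree is genuinely branched, or at least the segment of the (loose) parabolic tree running from the critical value Fatou component through $z_1$ and beyond is not analytic. The characteristic parabolic point $z_1$ lies on the boundary of the critical value Fatou component and is the landing point of the two dynamic rays at angles $\phi,\phi'$ of period $2k$ (the dynamic root rays, as recalled in the remark before the Parabolic Tree definition). I want to approximate $z_1$ along the parabolic tree by repelling preperiodic points $w_n,\tilde w_n$ at symmetric nonzero Ecalle heights $\pm h$, and to show the rays landing at these approximants converge to $\phi,\phi'$.

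First I would set up the Ecalle height function on the parabolic tree near $z_1$. Since $z_1$ is simple parabolic and lies on the boundary of the critical value Fatou component, the incoming Ecalle cylinder of the parabolic orbit (Lemma~\ref{LemEcalleCylinders}) receives the critical orbit, and the segment of the parabolic tree inside that Fatou component is the $p_c^{\circ k}$-invariant curve connecting the critical value to $z_1$; its projection to the incoming Ecalle cylinder is the equator, i.e.\ Ecalle height $0$. The key point is that the portion of the parabolic tree lying \emph{outside} the periodic Fatou components — the Cantor set of Julia-set points the tree meets, together with the Fatou components it traverses — accumulates at $z_1$ and carries a well-defined Ecalle height (via the attracting-petal Fatou coordinate extended to the basin, as in the discussion of phase differences after Lemma~\ref{LemEcalleCylinders}). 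Second, I would argue that among the repelling (pre)periodic points on the tree accumulating at $z_1$, there must be points of Ecalle height bounded away from $0$: if all tree points accumulating at $z_1$ had Ecalle height tending to $0$, the tree would have to be asymptotically the equator curve near $z_1$, and one shows this forces the relevant piece of tree to be analytic (it would be a single analytic arc through $z_1$), contradicting Lemma~\ref{Lem:AnalyticArcIsReal}. Third, symmetry: complex conjugation of the dynamics fixes $z_1$ and the pair $\{\phi,\phi'\}$, sends the parabolic tree to a loose parabolic tree with the same Julia-set intersection, and negates Ecalle height; so points of Ecalle height $h$ on the tree near $z_1$ come with companions of height $-h$. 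Choosing $h>0$ in the accumulation set of Ecalle heights of tree points near $z_1$ and a sequence $w_n\to z_1$ of repelling preperiodic tree points with $E(w_n)=h$ (using density of repelling preperiodic points on the tree together with continuity of Ecalle height along the tree to adjust to the exact value $h$), and their conjugates $\tilde w_n$ with $E(\tilde w_n)=-h$, gives the required approximating points.

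It remains to produce the dynamic rays at $w_n$ and $\tilde w_n$ and control their angles. Each $w_n$ is a repelling preperiodic point, hence is the landing point of at least one periodic or preperiodic dynamic ray, and in fact of finitely many rays; among these I select $\phi_n,\phi_n'$ as follows. Since $w_n$ lies on the parabolic tree and separates $z_1$ (with the critical value Fatou component) from the part of the tree on the far side, and since the two rays at $\phi,\phi'$ landing at $z_1$ bound the "wake" containing the critical value Fatou component, I can track which complementary components of $\{w_n\}\cup(\text{rays at }w_n)$ contain the critical value and which contain $z_1$; the two rays at $w_n$ that are "closest" to separating $z_1$ from $\infty$ in the direction of the critical value Fatou component are the ones whose angles I call $\phi_n,\phi_n'$. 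As $w_n\to z_1$, these separating rays must converge to rays landing at $z_1$ — this is the standard argument that landing points of rays at nearby combinatorial positions on the tree force the rays themselves to converge (using that the rays at $z_1$ at angles $\phi,\phi'$ are the only ones landing there that are relevant to this separation, and a compactness/continuity argument for the ray portraits along the tree). Hence $\phi_n\to\phi$ and $\phi_n'\to\phi'$, and the same for the conjugate family.

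The main obstacle I expect is the rigorous implementation of the step "tree points near $z_1$ all having Ecalle height tending to $0$ forces a local analytic arc" — i.e.\ extracting the precise local structure of the parabolic tree near the characteristic point and linking "unbranched/analytic near $z_1$" to the global primeness hypothesis and Lemma~\ref{Lem:AnalyticArcIsReal}. A secondary technical point is the convergence $\phi_n\to\phi$, $\phi_n'\to\phi'$: one must rule out that the selected separating rays at $w_n$ have angles that jump (e.g.\ converge to some other pair of rays at $z_1$ of period $k$ rather than the period-$2k$ root rays), which requires using that at a \emph{principal} parabolic the relevant separation at $z_1$ is realized precisely by the two root rays at angles $\phi,\phi'$.
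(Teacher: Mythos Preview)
Your proposal contains a genuine error and also misses the paper's much simpler construction.

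The error is in your symmetry step: you invoke ``complex conjugation of the dynamics'' to pass from points at Ecalle height $h$ to companions at height $-h$. But complex conjugation conjugates $p_c$ to $p_{\bar c}$, not to itself; since you are precisely assuming $p_c$ is \emph{not} conjugate to a real map, there is no such symmetry of the Julia set available. The correct symmetry, and the one the paper uses, is the antiholomorphic first return map $p_c^{\circ k}$ itself: it fixes $z_1$, acts on the outgoing Ecalle cylinder by interchanging the two ends, and hence negates Ecalle height. So one sets $\tilde w_n := p_c^{\circ k}(w_n)$; since $p_c^{\circ k}$ also permutes the two period-$2k$ rays at $z_1$, the ray-angle convergence for the $\tilde w_n$ follows immediately from that for the $w_n$.

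A related confusion: the tree points approaching $z_1$ from outside the critical value Fatou component lie in the \emph{repelling} petal, so the relevant Ecalle heights live in the \emph{outgoing} cylinder, not the incoming one.

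Finally, the paper's construction of the $w_n$ avoids both obstacles you flag. Instead of appealing to density plus continuity to hit the exact height $h$, the paper picks a single repelling preperiodic point $w_0$ on the tree in the repelling petal with some nonzero height $h$ (existence of such a point is exactly the contrapositive of the ``all heights zero $\Rightarrow$ analytic arc $\Rightarrow$ real'' step), and then sets $w_{n+1}:=p_c^{\circ(-2k)}(w_n)$ using the local branch fixing $z_1$. All $w_n$ then have \emph{exactly} the same Ecalle height $h$ by construction, they converge to $z_1$, and the rays landing at $w_n$ are iterated pullbacks of the two rays landing at $w_0$ from the two sides of the tree; hence their angles automatically converge to the two angles $\phi,\phi'$ of rays at $z_1$, with no need for a separate combinatorial selection argument.
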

\begin{proof}
Repelling preperiodic points are dense on the Cantor set where the 
parabolic tree intersects the Julia set (for instance by the 
condition of being prime), so choose one such point, say $w_0$, in 
the repelling petal of $z_1$ (repelling periodic points must 
accumulate at $z_1$ and cannot do this within the attracting petal; 
and some neighborhood of $z_1$ is covered by the union of attracting 
and repelling petals). If all repelling periodic points on the 
parabolic tree and near $z_1$ have Ecalle height $0$, then the 
parabolic tree must intersect the Julia set entirely at Ecalle height 
zero, and then one can construct an analytic arc that satisfies the 
hypotheses of Lemma~\ref{Lem:AnalyticArcIsReal}, so $p_c$ is 
conformally conjugate to a real map.

If this is not the case, then we may assume that $w_0$ has non-zero 
Ecalle height $h$; to fix ideas, say $h>0$. Construct a sequence 
$(w_n)$ so that $w_{n+1}:=p_c^{\circ (-2k)}(w_n)$, choosing a local 
branch that fixes $z_1$ and so that all $w_n$ are in the repelling 
petal of $z_1$; hence $w_n\to z_1$ as $k\to\infty$. All $w_n$ have 
the same Ecalle height $h$.

As $w_0$ is on the parabolic tree, which is invariant, it follows 
that $w_0$ is accessible from outside of the filled Julia set on both 
sides of the tree, so $w_0$ is the landing point of (at least) two 
dynamic rays, ``above'' and ``below'' the tree. If $\phi_n$ and 
$\phi'_n$ are the corresponding angles of rays landing at $w_n$, then it follows that these 
sequences of angles converge to angles of rays landing at $z_1$ on 
both sides of the tree, and the claim follows.

Now let $w'_n:=p_c^{\circ k}(w_n)$; then $w'_n\to z_1$ and all these 
points have Ecalle heights $-h$. The rays landing at $z_1$ have 
angles $\phi$ and $\phi'$ and their period is $2k$, so $p_c^{\circ 
k}$ permutes these and the claim about $w'_n$ and its rays follows.
\end{proof}

\section{Non-Pathwise Connectivity}
\label{Sec:Non-Pathwise}

We denote the dynamic ray at angle $\phi$ for the map $p_c$ by $R_c(\phi)$, and as before the parameter ray at angle $\phi$ by $R(\phi)$.


\begin{theorem}[Rays Approximating at OPPPP Arc]
\label{Thm:RaysApproximatingArc}
Let $\mathcal A$ be a prime parabolic root arc of odd period $k\ge 3$ 
that does not intersect the real axis or its images by a symmetry 
rotation of $\M_d$, and let $c\in\mathcal A$ be the parameter with 
Ecalle height zero. Let $\phi$ and $\phi'$ be the characteristic angles of the parabolic orbit for parameters $c\in\mathcal A$. 
Then there is a sub-arc $\mathcal A_\tau\subset\mathcal A$ of positive length and there are angles $\tilde\phi_n\to\tilde\phi$ and $\phi'_n\to\phi'$ so that $\mathcal A_\tau$ is contained in the limit of the parameter rays  $R(\tilde\phi_n)$, and also of $R(\phi'_n)$ (this is the limit of the sequence of rays, not necessarily of individual rays:  each $c\in\mathcal A_\tau$ is the limit of a sequence of points on the parameter rays $R(\tilde\phi_n)$, and of another sequence on $R(\phi'_n)$.)
\end{theorem}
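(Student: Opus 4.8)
The plan is to combine the dynamical-plane statement of Lemma~\ref{Lem:ApproxRayPairs} with the parameter-space transfer machinery of Section~\ref{Sec:ParabolicPerturbations} (the bundle of Ecalle cylinders, the transit map, and Proposition~\ref{Prop:PerturbedFatouCoords}), much as the Douady--Hubbard ``spiralling'' argument transfers dynamical information to parameter space. First I would fix the parameter $c\in\mathcal A$ of Ecalle height zero; since $\mathcal A$ is prime and does not meet the real axis (or its rotational images), Lemma~\ref{Lem:ApproxRayPairs} applies and produces repelling preperiodic points $w_n\to z_1$ on the parabolic tree at a fixed Ecalle height $h>0$, each the landing point of a ray pair at angles $(\tilde\phi_n,\phi'_n)$ with $\tilde\phi_n\to\phi$ and $\phi'_n\to\phi'$. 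The idea is that a parameter $c'$ near $c$ lies on the parameter ray $R(\tilde\phi_n)$ precisely when the critical value of $p_{c'}$ sits on the dynamic ray $R_{c'}(\tilde\phi_n)$, and we will locate such $c'$ by tracking where the critical value lands, measured in the perturbed outgoing Ecalle cylinder.

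Next I would set up the perturbation picture of Section~\ref{Sec:ParabolicPerturbations}: for $c'\in U^-$ the parabolic orbit has become an orbit of period $2k$, with incoming and outgoing perturbed Ecalle cylinders $C^{in}_{c'}$, $C^{out}_{c'}$ and the transit map $T_{c'}\colon C^{in}_{c'}\to C^{out}_{c'}$. The critical value $v(c')$ (depending real-analytically on $c'$) defines a point $\sigma(c')\in C^{in}_{c'}$, and by continuity its Ecalle height $E(\sigma(c'))$ tends to $0$ as $c'\to c$ along $\overline{U^-}$. For a suitable transversal curve $s\mapsto c(s)$ into $U^-$, Proposition~\ref{Prop:PerturbedFatouCoords} says that $\gamma(s)=T_{c(s)}(\sigma(s))$ spirals, as $s\searrow 0$, toward the circle in $C^{out}_{c_0}$ at the Ecalle height of the critical value, while the Ecalle phase $\arg\gamma(s)\to\infty$. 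The repelling preperiodic points $w_n$, $\tilde w_n=p_c^{\circ k}(w_n)$ and their rays at angles near $\phi,\phi'$ survive the perturbation (repelling preperiodic points and their rays are stable), and in the perturbed dynamics they sit in the outgoing cylinder at fixed Ecalle heights $\pm h$ and at phases that, as $n\to\infty$, wind around the cylinder. The key point is: if, as $c'$ varies, the spiralling curve $\gamma$ (the image of the critical value in $C^{out}_{c'}$) crosses the perturbed dynamic ray landing at (the perturbation of) $w_n$, then at that parameter the critical value lies on $R_{c'}(\tilde\phi_n)$, i.e.\ $c'\in R(\tilde\phi_n)$.

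Then I would run a $2$-parameter (or arc-family) version of this crossing argument to pin down the sub-arc $\mathcal A_\tau$. Moving $c'$ along $\mathcal A$ changes the Ecalle height of the critical value (Proposition~\ref{Prop:ParabolicArc}); choosing $\mathcal A_\tau\subset\mathcal A$ to be the sub-arc of parameters whose critical-value Ecalle height lies in an interval strictly between $-h$ and $h$ (this interval has positive length and contains $0$, and by real-analytic parametrization corresponds to a sub-arc of positive length), every $c_1\in\mathcal A_\tau$ has a one-sided neighborhood in $U^-$ on which the spiralling curve $\gamma$, sweeping through all Ecalle heights near $E(v(c_1))\in(-h,h)$ as the phase runs to infinity, must repeatedly cross the perturbed ray landing at $w_n$ (whose Ecalle height is exactly $h$ and whose phase winds as $n$ grows) --- more precisely, for each large $n$ there is a parameter $c'_n$ close to $c_1$ on $R(\tilde\phi_n)$, because the image of the critical value in the outgoing cylinder passes through the height-$h$ circle at a controlled phase while $c'_n$ ranges near $c_1$. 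Letting $n\to\infty$ gives $c'_n\to c_1$ with $c'_n\in R(\tilde\phi_n)$, so $c_1$ is in the limit of the parameter rays $R(\tilde\phi_n)$; the argument for $R(\phi'_n)$ is identical, using $\phi'_n\to\phi'$ (or equivalently the other ray of the pair, and the $\tilde w_n$ at Ecalle height $-h$, noting $p_c^{\circ k}$ swaps the two). I expect the main obstacle to be exactly this last step: making the ``the spiralling curve crosses the perturbed ray at the right parameter'' rigorous --- one must show that the perturbed dynamic ray at angle $\tilde\phi_n$ really does land at the continuation of $w_n$ inside the outgoing petal (stability of ray pairs and of the parabolic-tree combinatorics under perturbation), that it occupies the correct Ecalle height $h$ and a phase that can be read off, and that the intermediate-value/crossing argument in the two variables (parameter along $\mathcal A_\tau$, and the transversal direction) genuinely produces a parameter on $R(\tilde\phi_n)$ converging to the prescribed $c_1$. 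This is where the continuity of the bundle $\mathcal C^{out}$ over $\overline{U^-}$ and the asymptotics $\arg\gamma(s)\sim \pi/|a_{c(s)}|$ do the real work.
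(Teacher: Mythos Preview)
Your framework matches the paper's: apply Lemma~\ref{Lem:ApproxRayPairs}, set up the perturbed Fatou coordinates, and use Proposition~\ref{Prop:PerturbedFatouCoords} to transfer to parameter space. But the crossing mechanism you describe in the last paragraph is not the right one, and as written it produces no crossing at all.

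You say the spiral $\gamma(s)$ ``passes through the height-$h$ circle'' and must meet the ray landing at $w_n$ ``whose Ecalle height is exactly $h$''. Neither is correct. By Proposition~\ref{Prop:PerturbedFatouCoords}, $\gamma(s)$ accumulates on the circle at height $t=E(\sigma(0))$, the Ecalle height of the critical value for the base parameter, and $t$ lies strictly between $-h$ and $h$; the spiral never reaches the height-$\pm h$ circles where the landing points sit. What makes the argument work is a feature you do not mention: the dynamic ray $R_c(\tilde\phi_n)$, projected into the outgoing Ecalle cylinder, is itself a \emph{curve} that starts at its landing point $\tilde w_n$ (height $-h$) and spirals upward, accumulating on the projection of the periodic ray $R_c(\phi)$. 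Hence this projected ray traverses every Ecalle height in any compact subinterval of $(-h,h)$, and in particular it crosses the height-$t$ circle. Moreover, since the $\tilde w_n$ are related by iterates of $p_c^{\circ 2k}$, all the rays $R_c(\tilde\phi_n)$ project to the \emph{same} curve in the cylinder; which $n$ one is on is determined by which fundamental domain one lifts to. Now the spiral $\gamma(s)$, with height tending to $t$ and phase tending to $\infty$, must cross this fixed ray-curve for arbitrarily small $s>0$; at each such $s$ the critical value of $p_{c(s)}$ lies on some $R_{c(s)}(\tilde\phi_n)$, so $c(s)\in R(\tilde\phi_n)$, with $n\to\infty$ as $s\searrow 0$. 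The companion sequence $R(\phi'_n)$ is handled the same way using the rays landing at $w_n$ (height $+h$), which spiral \emph{downward} toward $R_c(\phi')$.

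So the object whose range of Ecalle heights matters is the projected dynamic ray, not its landing point, and the crossing happens at the critical-value height $t$, not at height $\pm h$. Once you make this correction (and note that continuity lets you keep the ray traversing $[-h+\eps,h-\eps]$ for nearby parameters), your outline becomes the paper's proof, including the choice of $\mathcal A_\tau$ as the sub-arc with $|t|<\tau$ for suitable $\tau<h$.
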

\begin{proof}
In the dynamics of $p_c$, let $z_1$ be the characteristic parabolic 
point. The angles $\phi$ and $\phi'$ have period $2k$, so the rays $R_c(\phi)$ and $R_c(\phi')$ are interchanged by the first return map of $z_1$. In the outgoing Ecalle cylinders at $z_1$ of the holomorphic 
map $p_c^{\circ 2k}$, the rays $R_c(\phi)$ and $R_c(\phi')$ project to disjoint simple closed curves, not necessarily at constant Ecalle 
heights, but it makes sense to say which of the two rays has greater 
Ecalle heights (removing the projection of one ray from the Ecalle 
cylinder, the other ray is in the component with arbitrarily large 
positive or negative Ecalle heights). Without loss of generality, 
suppose that $R_c(\phi)$ has greater heights than $R_c(\phi')$. Let $h^+$ be the maximum of Ecalle heights of $\phi$, and $h^-$ be the minimum of Ecalle heights of $\phi'$; since $p_c^{\circ k}$ interchanges $R_c(\phi)$ and $R_c(\phi')$, we have $h^+=-h^->0$.

Consider the sequences of repelling preperiodic points $w_n$ and 
$\tilde w_n$ converging to $z_1$ as provided by 
Lemma~\ref{Lem:ApproxRayPairs}, and let $h>0$ and $-h$ be their 
Ecalle heights. Then clearly $h<h^+$ (the points $w_k$ are in the part of the Ecalle cylinder bounded by the rays $R_c(\phi)$ and $R_c(\phi')$). 
 The rays $R_c(\tilde \phi_n)$ terminate at the points $\tilde w_n$ with Ecalle heights $-h$, while they all project to the same ray in the Ecalle cylinder, in which they spiral upwards and converge towards the projection of the ray at angle $\phi$. Therefore, for any compact subinterval of $(-h,h)$, the rays $R_c(\tilde\phi_n)$ have Ecalle heights in this entire compact interval. Similarly, the rays $R_c(\phi'_n)$ terminate at the $w_n$ with Ecalle height $h$ and also have Ecalle heights within any compact subinterval of $(h^-,h)$; see Figure~\ref{Fig:RaysOverlappingEcalleHeights}.

Now let $c_t\in\mathcal A$ be the parameter where the critical 
value has Ecalle height $t\in\R$ (see 
Proposition~\ref{Prop:ParabolicArc}). The points $w_k$ depend 
real-analytically on $t$ (like the entire Julia set); let $h(t)$ be 
their Ecalle heights; these too depend analytically on $t$. 
Therefore, there is a $\tau\in(0,h)$ so that $h(t)>t$ for all 
$t\in(-\tau,\tau)$. Choose $\eps\in(0,\tau)$.
Let $\mathcal A_\tau\subset\mathcal A$ be the sub-arc with Ecalle heights in $(-\tau+\eps,\tau-\eps)$. 

To transfer these dynamic rays from the Ecalle cylinders to parameter space, we employ Proposition~\ref{Prop:PerturbedFatouCoords}. Choose any smooth path $c\colon[0,\delta]\to\C$ with $c(0)=c_t\in \mathcal A_\tau$ but so that except for $c(0)$ the path avoids closures of hyperbolic components of period $k$, and so that the path is transverse to $\mathcal A$ at $c_t$. 

In the outgoing cylinder of $c(0)=c_t\in\mathcal A$, all $R_c(\tilde\phi_n)$ traverse Ecalle heights in $[-h+\eps/2,h-\eps/2]$.  Since each ray $R_c(\tilde \phi_n)$ and its landing point depend uniformly continuously on $c$, and since the projection into Ecalle cylinders is also continuous, there is a $\delta_\eps>0$ so that for all $c(s)$ with $s<\delta$ the projection of the rays $R_c(\tilde\phi_n)$ into the Ecalle cylinders traverses heights $[-h+\eps,h-\eps]$, while the phase is uniformly continuous in $s$. 

For $s\in[0,\delta]$, let $z(s)$ be the critical value. For $s>0$, the critical orbit ``transits'' from the incoming Ecalle cylinder to the outgoing cylinder; as $s\searrow 0$, the image of the critical orbit in the outgoing Ecalle cylinder has Ecalle height tending to $t\in(-\tau+\eps,\tau-\eps)\subset(-h+\eps,h-\eps)$, while the phase tends to infinity. 
Therefore, there are $s\in(0,\delta_\eps)$ arbitrarily close to $0$ at which the critical value, projected into the incoming cylinder and sent by the transfer map to the outgoing cylinder, lands on the projection of the rays $R_c(\tilde \phi_n)$. But in the dynamics of $p_{c(s)}$, this means that the critical value is on one of the dynamic rays $R_c(\tilde\phi_n)$, so $c(s)$ is on the parameter ray $R(\tilde\phi_n)$. 

The analogous statement holds for $\phi'_n$.
\end{proof}

\begin{figure}
\framebox{\Includegraphics{height=80mm,trim=20 0 20 0,angle=90}{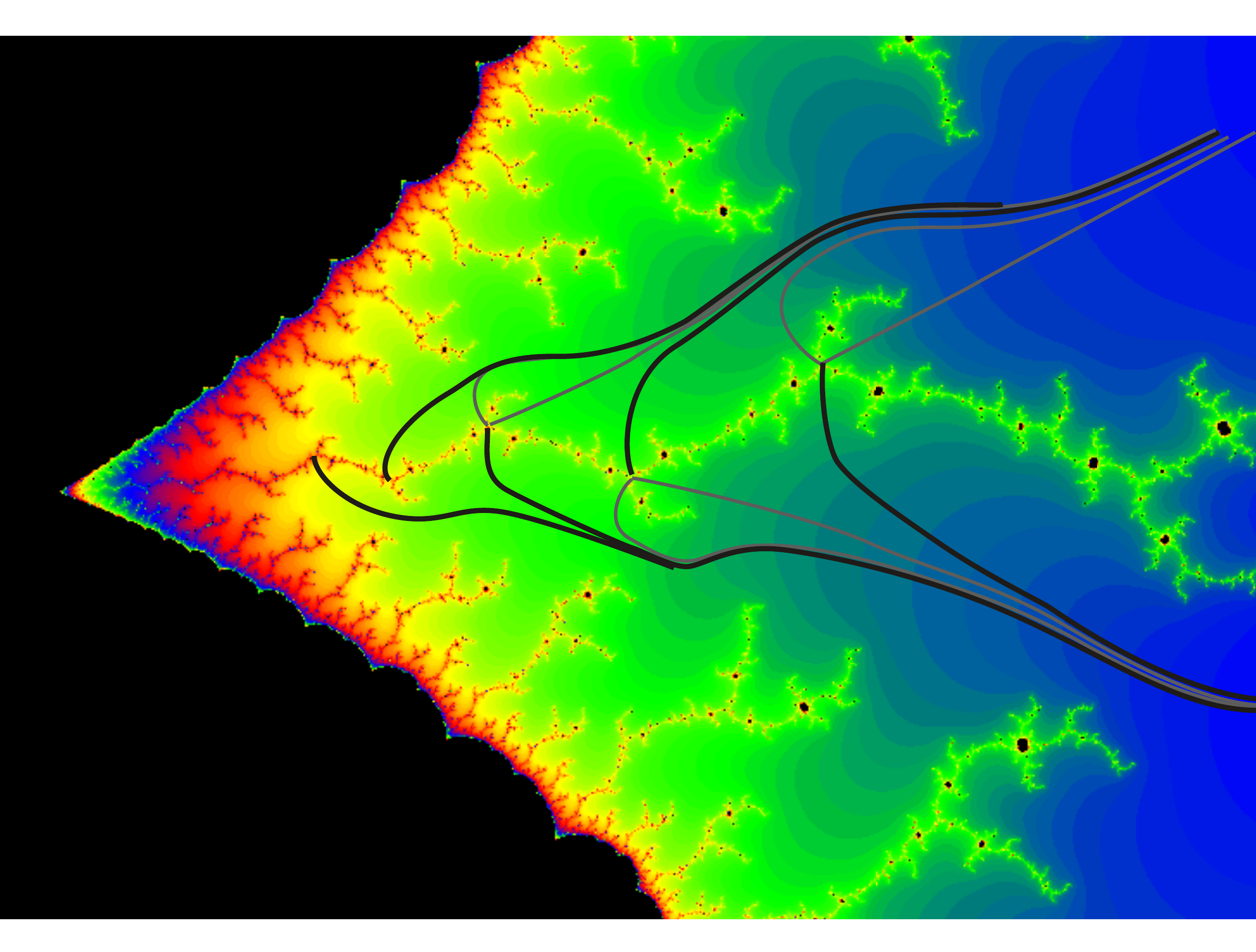}}
\framebox{\Includegraphics{width=80mm}{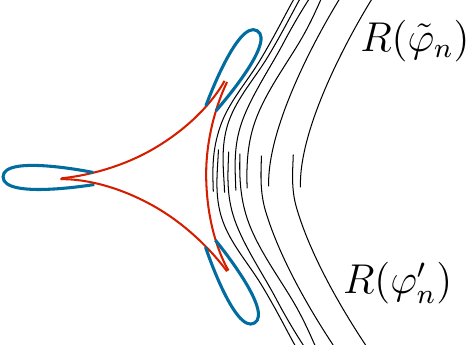}}
\framebox{\Includegraphics{width=80mm,angle=180,trim=125 230 90 180,clip}{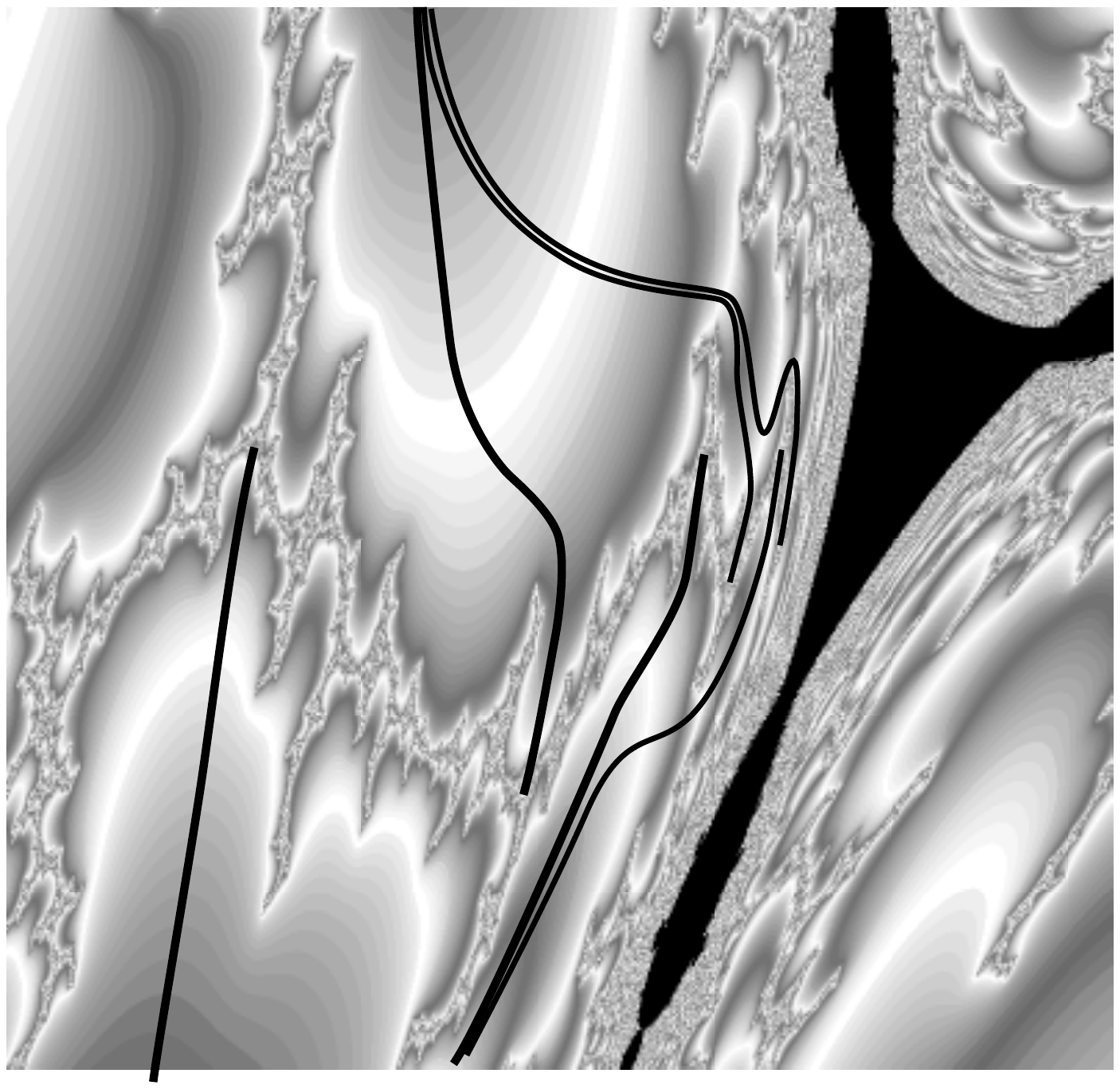}}
\caption{Loss of pathwise connectivity because of approximating 
overlapping parameter rays. Top: approximating preperiodic dynamic rays in the dynamic plane with a parabolic orbit. Only the rays drawn by heavy lines are used in the argument below; other rays landing at the same points are drawn in grey. Middle: symbolic sketch of the situation in the parameter space. Bottom: Actual parameter rays accumulating in the same pattern, producing a double-comb-like structure.
}
\label{Fig:RaysOverlappingEcalleHeights}
\end{figure}


Now the proof of our main result is simple.

\begin{theorem}[Multicorns Are Not Path-Connected]
\label{Thm:LossPathwise}
For each $d\ge 2$, the Multicorn $\Md$ is not path connected. 
\end{theorem}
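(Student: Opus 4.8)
The plan is to derive the theorem as a short corollary of Theorem~\ref{Thm:RaysApproximatingArc}, which does all the heavy lifting. The strategy is to produce, inside some multicorn $\Md$, two distinct parameter rays whose accumulation sets both contain an entire sub-arc $\mathcal A_\tau$ of positive length on a parabolic root arc, and to observe that this ``double comb'' obstruction is incompatible with path connectivity. The key conceptual point is that if $K\subset\C$ is path connected and $p,q\in\C\sm K$ lie in the \emph{same} complementary component of $\C\sm K$ but are ``separated'' infinitely often by a sequence of arcs running from one accumulation comb to the other, then any path in $K\cup(\C\sm\Md)$ from a point near one comb to a point near the other would have to cross the rays $R(\tilde\phi_n)$ and $R(\phi'_n)$ for all large $n$, forcing the path to have points of arbitrarily small distance to $\mathcal A_\tau$ from both sides --- and a limiting/accumulation argument then shows that $\Md$ itself fails to be locally path connected at points of $\mathcal A_\tau$, and in fact fails to be path connected, because the two families of rays interleave in the pattern sketched in Figure~\ref{Fig:RaysOverlappingEcalleHeights}.

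First I would fix an odd period $k\ge 3$ that is prime (so that every principal parabolic of that period is automatically OPPPP, as noted after the definition of OPPPP) and large enough that there exists a prime parabolic root arc $\mathcal A$ of period $k$ that does not meet the real axis or any of its images under the $(d+1)$-st root-of-unity symmetry of $\Md$; such an arc exists because the real axis and its rotated copies form a finite union of real-analytic arcs, while the number of parabolic root arcs of period $k$ grows with $k$ (this is where one invokes the count of hyperbolic components and the structure of $\partial W$ from \cite{Multicorns2}, together with the symmetry of $\Md$). Then Theorem~\ref{Thm:RaysApproximatingArc} applies and yields the sub-arc $\mathcal A_\tau\subset\mathcal A$ of positive length, together with angle sequences $\tilde\phi_n\to\tilde\phi$ and $\phi'_n\to\phi'$ such that every $c\in\mathcal A_\tau$ is simultaneously a limit of points on the parameter rays $R(\tilde\phi_n)$ and a limit of points on the parameter rays $R(\phi'_n)$, with $\tilde\phi_n$ approaching from Ecalle heights just below the arc and $\phi'_n$ from just above (the two combs straddle $\mathcal A_\tau$).

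The next step is the topological argument. Suppose for contradiction that $\Md$ is path connected, and pick $c^\sharp\in\mathcal A_\tau$. Choose two points $x^+,x^-\in\Md$ close to $c^\sharp$, one on each ``side'' of the interleaving combs, i.e.\ so that for all large $n$ the point $x^+$ and the point $x^-$ are separated in $\C$ by the Jordan arc obtained by concatenating the parameter ray $R(\tilde\phi_n)$, a small arc near its accumulation set on $\mathcal A_\tau$, and the parameter ray $R(\phi'_n)$ running back out to $\infty$ (here one uses that these rays accumulate on $\mathcal A_\tau$ and that the combs interleave, so infinitely many such separating arcs exist and shrink toward $c^\sharp$). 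A path $\gamma$ in $\Md$ from $x^+$ to $x^-$ is a compact connected subset of $\Md$, hence disjoint from every parameter ray (parameter rays lie in $\C\sm\Md$); but $\gamma$ must cross each of the separating arcs, which forces $\gamma$ to meet $\mathcal A_\tau$ in points arbitrarily close to $c^\sharp$ on both sides infinitely often. Passing to a subsequence, $\gamma$ accumulates on a point of $\mathcal A_\tau$ while oscillating between the two sides --- this contradicts the fact that $\gamma$ is a continuous image of an interval (a path cannot oscillate between two disjoint relatively open pieces of its image near a single parameter value while the arcs separating them shrink to zero size). I expect the main obstacle to be making this last separation/crossing argument fully rigorous: one must verify carefully that the parameter rays $R(\tilde\phi_n)$ and $R(\phi'_n)$, together with a controlled portion of their accumulation sets, genuinely bound a ``channel'' that a path must traverse, and that the two combs interleave rather than merely both accumulating on $\mathcal A_\tau$ from the same side; this is where one leans on the Ecalle-height bookkeeping in the proof of Theorem~\ref{Thm:RaysApproximatingArc} (the heights $-h$ versus $h$, and $h<h^+=-h^-$) to guarantee the rays from the two families are linked in the correct alternating order, exactly as depicted in the middle and bottom panels of Figure~\ref{Fig:RaysOverlappingEcalleHeights}. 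Once the interleaving is established, non-path-connectedness --- indeed failure of local path connectivity at every point of $\mathcal A_\tau$ --- follows, and since such an arc $\mathcal A_\tau$ was produced inside $\Md$ for every $d\ge 2$, the theorem is proved.
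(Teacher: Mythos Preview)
Your approach is essentially the paper's: both derive the theorem as a short corollary of Theorem~\ref{Thm:RaysApproximatingArc}, using the two ray families accumulating on $\mathcal A_\tau$ to obstruct any path between the hyperbolic component $W$ and a suitable second component.

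There is one concrete step in your execution that would fail as written. You propose to build, for each large $n$, a separating Jordan arc by concatenating $R(\tilde\phi_n)$, a piece of $\mathcal A_\tau$, and $R(\phi'_n)$. But Theorem~\ref{Thm:RaysApproximatingArc} only asserts that $\mathcal A_\tau$ lies in the accumulation set of the \emph{sequence} of rays; it says nothing about individual rays landing, and the paper elsewhere stresses that parameter rays in $\Md$ typically do \emph{not} land. So no such Jordan arc is available. Your closing contradiction (``a path cannot oscillate between two disjoint relatively open pieces of its image near a single parameter value while the separating arcs shrink'') is also not a valid principle on its own. You flag both issues yourself, so the missing piece is not an idea but a cleaner packaging.

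The paper sidesteps both problems at once. Instead of single rays it takes the whole set
\[
\bigcup_n R(\tilde\phi_n)\;\cup\;\bigcup_n R(\phi'_n)\;\cup\;\mathcal A_\tau
\]
and observes that this already disconnects $\C$ into at least two path-components. For the two points to be separated it does not use two nearby points $x^\pm$ but rather $W$ itself on one side and a distant hyperbolic component $W'$ (one in the accumulation set of $R(1/(2^n\pm1))$ for large $n$) on the other. Any path in $\Md$ joining them avoids all parameter rays, hence would have to accumulate on every point of $\mathcal A_\tau$; that is the contradiction. This avoids ever needing any individual ray to land and makes the ``interleaving'' bookkeeping unnecessary.
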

\begin{proof}
Let $W$ be any hyperbolic component of odd period not intersecting the real axis, let $\mathcal A\subset W$ be the parabolic root arc and suppose it is prime. By Theorem~\ref{Thm:RaysApproximatingArc}, there is a sub-arc $\mathcal A_\tau$ of positive length and there are two sequences of angles $\tilde\phi_n$ and $\phi'_n$ converging to limits $\tilde\phi\neq\phi'$ so that the set
\[
\bigcup_n R(\tilde\phi_n) \cup \bigcup_n R(\phi'_n)\cup\mathcal A_\tau
\]
disconnects $\C$ into at least $2$ path-components. If the angles are oriented so that $\phi<\phi'$, then $W$ is in a different component from $R(0)$ and any hyperbolic component $W'$ in the limit set of the angle $R(1/(2^n\pm 1))$ for sufficiently large $n$: any path connecting $W$ to $W'$ must accumulate at all points in $\mathcal A_\tau$, and this is impossible.
\end{proof}

\begin{remark}
We believe that the only hyperbolic components for which the
umbilical cord lands are on the real axis, or symmetric to such 
components by a rotational symmetry of $\Md$. For individual 
components, this can be verified numerically: all one needs to know 
is that the parabolic tree does not contain analytic arcs, even in 
the non-prime situation; for this it is good enough to know that the subtree of renormalizable points 
contains periodic points with non-real multipliers.

\end{remark}

\section{Further Results}
\label{Sec:FinalSection}

The following proposition and its proof are inspired by more general results 
due to Bergweiler \cite{Bergweiler} as well as Buff and Epstein 
\cite{BE}.


\begin{theorem}[No Bifurcation at Ecalle Height Zero]
\label{Thm:EcalleHeightZeroNoBifurcation}
On every parabolic arc of period $k$, the point with Ecalle height zero has a neighborhood (along the arc) that does not intersect the boundary of a hyperbolic component of period $2k$.
\end{theorem}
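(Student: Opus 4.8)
The plan is to reduce this statement to a single strict inequality for the holomorphic fixed point index and then to establish that inequality. By Theorem~\ref{Thm:BifurcationArcs}, every parameter on a parabolic arc of period $k$ lying on the boundary of some hyperbolic component of period $2k$ has fixed point index $\iota\ge 1$; and by the earlier Proposition on the index along a parabolic arc, $\iota$ is a real-valued, real-analytic function of the Ecalle height tending to $+\infty$ at both ends (Corollary~\ref{Cor:BifArcs} being the complementary fact that near the ends, where $\iota$ is large, bifurcation to period $2k$ genuinely occurs). Hence it suffices to prove the single inequality $\iota(c_0)<1$, where $c_0$ is the parameter of Ecalle height $0$: continuity of $\iota$ then produces a sub-arc around $c_0$ on which $\iota<1$, and Theorem~\ref{Thm:BifurcationArcs} shows that no point of this sub-arc lies on the boundary of a period-$2k$ component.

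To reach $\iota(c_0)$, normalize the germ $g:=p_{c_0}^{\circ k}$ at the characteristic parabolic point $z_1$ as in the Remark following Lemma~\ref{Lem:PeriodDoubling}, so that $g(w)=\ovl w+\ovl w^2+c_3\ovl w^3+\cdots$; a short computation then puts $g^{\circ 2}=p_{c_0}^{\circ 2k}$ in the form $w+2w^2+(2+2\Re c_3)\,w^3+\cdots$, and rescaling this into the normal form $w+w^2+\iota\,w^3+\cdots$ gives $\iota=\tfrac12(1+\Re c_3)$. Thus the theorem is equivalent to the inequality $\Re c_3<1$ at Ecalle height $0$, equivalently to strict positivity of the r\'esidu it\'eratif $1-\iota=\tfrac12(1-\Re c_3)$ there.

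This inequality is genuinely global: $c_3$ is a germ invariant of $g$, while Ecalle height zero is a condition on the position of the critical value inside the parabolic basin, so the two are linked only through the global dynamics, via the \'Ecalle--Voronin picture. Take the Fatou coordinates of Lemma~\ref{LemEcalleCylinders} in which $g$ becomes $w\mapsto\ovl w+\tfrac12$ with equator $\{\Im w=0\}$, and recall that the r\'esidu it\'eratif is the shift at the ends of the associated horn map $H$: writing $H(w)=w+\psi(w)$ with $\psi(w)\to\delta^{\pm}$ as $\Im w\to\pm\infty$, the identity $H(\ovl w+\tfrac12)=\ovl{H(w)}+\tfrac12$ forces $\psi(\ovl w+\tfrac12)=\ovl{\psi(w)}$, hence $\delta^-=\ovl{\delta^+}$ and $\delta^+-\delta^-=\pm 2\pi i\,(1-\iota)$ --- which re-proves that the index is real and reduces the claim to the sign of $\Im\delta^+$ (the relevant sign being fixed once and for all by inspecting a cusp end, where $\iota\to+\infty$). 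To pin this sign one uses that the parabolic basin carries the whole critical orbit of $p_{c_0}$ and that $p_{c_0}$ restricts to a degree-$d$ branched covering of the basin: pulling the incoming Ecalle coordinate back over the basin, the critical point makes the extended horn map a genuine branched self-covering of a cylinder end, with critical values the projections of the critical orbit and omitting the projections of the two periodic dynamic rays landing at $z_1$ --- which by the $w\mapsto\ovl w+\tfrac12$ symmetry are reflections of one another across the equator. A length--area estimate for such a constrained map, in the spirit of Bergweiler~\cite{Bergweiler} and of the parabolic Pommerenke--Levin--Yoccoz inequality of Buff and Epstein~\cite{BE}, bounds the r\'esidu it\'eratif below in terms of the Ecalle height of the critical value; when that height is $0$ the critical value sits exactly midway in the omitted gap, and the estimate is strong enough to give $1-\iota>0$ rather than merely $1-\iota\ge 0$ --- equality being excluded because it would force $H$ to be affine on a whole half-cylinder, incompatible with the presence of the critical point at height $0$. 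Making this last estimate quantitative enough to rule out equality is, I expect, the main obstacle.

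With $\iota(c_0)<1$ in hand, the continuity of $\iota$ along the arc provides a sub-arc containing $c_0$ on which $\iota<1$, and Theorem~\ref{Thm:BifurcationArcs} then shows this sub-arc meets the boundary of no hyperbolic component of period $2k$, which is the claim. Nothing in the argument uses that the arc is a root arc or that $k\ge 3$: only the characteristic parabolic point, its incoming Ecalle cylinder, and the critical value lying in the corresponding basin enter, so co-root arcs and the case $k=1$ are covered too.
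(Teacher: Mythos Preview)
Your reduction via Theorem~\ref{Thm:BifurcationArcs} to the single inequality $\iota(c_0)<1$ is legitimate and is a cleaner way to frame the problem than the paper's more direct perturbation argument (the paper shows that the horn-map shift $\eta$ is positive and then argues that the bifurcating period-$2k$ points become repelling after perturbation; but $\eta>0$ and $1-\iota>0$ are equivalent, so the two routes meet). Your germ computation $\iota=\tfrac12(1+\Re c_3)$ and the symmetry $\delta^-=\ovl{\delta^+}$ are correct.

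The genuine gap is exactly the one you flag: you never produce the inequality. Invoking ``a length--area estimate in the spirit of Bergweiler and Buff--Epstein'' is not a proof, and your rigidity argument for strictness (``$H$ affine on a half-cylinder'') is left hanging on an estimate you have not stated. The paper closes this gap with a short, self-contained argument that avoids the full Buff--Epstein machinery. Define the \emph{Gr\"otzsch defect} $\delta$ of the outgoing half-cylinder $C_o$ as the asymptotic height loss of the parabolic basin $C_o'\subset C_o$, i.e.\ $\delta=\lim_{h\to\infty}\bigl(h-\bmod C_o'(h)\bigr)$; the Schwarz lemma gives $\delta\ge 0$, and $\delta>0$ strictly because $C_o\setminus C_o'$ contains open pieces of the basin of infinity. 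Now use Ecalle height zero: since the critical orbit sits on the equator, the incoming half-cylinder $C_i(H)$ of height $H$ pulls back conformally into $C_o'$, landing (by the definition of $\eta$) inside $C_o'(H+\eta+\eps)$ for any $\eps>0$ once $H$ is large. Comparing moduli gives $H=\bmod C_i(H)\le \bmod C_o'(H+\eta+\eps)\le H+\eta+\eps-\delta$, hence $\eta\ge\delta-\eps$ for all $\eps$, so $\eta\ge\delta>0$. That is the whole proof; no separate rigidity step is needed, because strict positivity of $\delta$ already comes from the basin of infinity.

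One smaller point: your sketch invokes ``the two periodic dynamic rays landing at $z_1$'' as the omitted values of the horn map. On a co-root arc there is only one such ray, so the symmetric pair you describe is not available; the paper's argument sidesteps this by using the basin of infinity wholesale to obtain $\delta>0$, which works uniformly for root and co-root arcs and for $k=1$.
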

\begin{proof}
Suppose $p_c$ is the center point of the parabolic arc (at Ecalle 
height $0$). We will now 
discuss the local dynamics of the holomorphic first return map, i.e., 
the $2k$-th iterate of $p_c$. The upper and lower endpoints of the 
Ecalle cylinders correspond to fixed points of $p_c^{\circ 2k}$; but 
they are interchanged by $p_c^{\circ k}$, so they are simultaneously 
attracting or repelling with complex conjugate multipliers.

Points in the outgoing petal at sufficiently large positive Ecalle 
heights will return to the incoming petal; this map is called the 
``horn map''. This induces a conformal map from the upper end of the 
outgoing cylinder to the incoming cylinder which, by the Koebe 
compactness theorem, is close to a translation by a complex constant 
(writing the cylinders as $\Cstar$, it is close to multiplication by 
a constant). Let $\eta$ be the imaginary part of the translation 
constant.
Therefore, for every $\eps>0$ there is an $H>0$ so that points at 
Ecalle heights $h_o>H$ in the outgoing cylinder return 
to points in the incoming cylinder with height $h_i$ so that 
$|h_i-h_o+\eta|<\eps$, i.e.\ $h_i\in(h_o-\eta-\eps,h_o-\eta+\eps)$.

Cut the outgoing and incoming Ecalle cylinders at the equators into 
upper and lower half-cylinders and label the upper halves $C_o$ and 
$C_i$, where $i$ and $o$ stand for ``incoming'' and ``outgoing'' (the 
whole discussion can be done analogously in the lower halves, with 
negative Ecalle heights, just as well; the antiholomorphic iteration 
assures that they are completely symmetric).
Let $C'_o\subset C_o$ be the restriction to the parabolic 
basin. Let $f\colon C'_o\to C_o$ be a conformal isomorphism; it 
is unique up to addition of a real constant (a phase). There is a 
number $\delta\in\C$ so that asymptotically near the end, 
$f(z)=z-i\delta$. By adjusting the freedom in $f$, we can make 
$\delta$ purely real. The Schwarz Lemma, together with the fact that $C_o\sm C'_o$ contains open sets in the basin of infinity, implies  $\delta>0$. This 
number is called the \emph{Gr\"otzsch defect} of the outgoing 
cylinder (with respect to the parabolic basin).

For $h>0$, let $C_i(h)$ and $ C'_o(h)$ be $C_i$ and $C'_o$ 
restricted to  Ecalle heights in $(0,h)$. Then $h=\bmod(C'_o(h))$ and $\delta$ 
can be viewed as the limit, as $h\to \infty$, of $h-\bmod(C'_o(h))$.
The Gr\"otzsch inequality implies that $\bmod(C'_o(h))\le h-\delta$ for all $h$.

Choose $\eps\in(0,\delta)$ and $H$ depending on $\eps$ as above.
Since by hypothesis the critical orbit is at Ecalle 
height $0$, one can pull back $C_i(H)$ conformally into $C_o$; it 
must land within $ C'_o(H+\eta+\eps)$ (it must have Ecalle height 
less than $H+\eta+\eps$ and it must be in the part within the 
attracting basin). Since $\bmod(C_i(H))=H$, while $\bmod( 
C_o'(H+\eta+\eps))\le H+\eta+\eps-\delta$, this implies 
$\eta+\eps-\delta\ge 0$ and thus $\eta>0$.

By Proposition~\ref{Prop:PerturbedFatouCoords}, the outgoing and 
incoming cylinders exist after small perturbations of the parameter 
outside of its hyperbolic component, the Ecalle heights depend 
continuously on the perturbation, and the same holds for the ``horn 
maps'' from the ends of the outgoing into the incoming cylinders.

If $\eps<|\eta|$, then $\eta<0$ means that points with great Ecalle 
heights the outgoing cylinder return into the incoming cylinder at 
greater heights.
For such sufficiently small perturbations, $\eta<0$ thus implies that 
points near the end of the cylinder will, after perturbation, 
converge to the end of the cylinder; the endpoint of the cylinder 
thus becomes an attracting fixed point. Similarly, if $\eta>0$, then 
the endpoints become repelling. Since these are the periodic points 
that bifurcate from the period $k$ orbit, this shows that parameters 
$c$ with $\eta>0$ are not on the boundary of a period $2k$ hyperbolic 
component, and this is the case when the Ecalle height $h$ is zero or 
sufficiently close to zero.
\end{proof}

\begin{remark}
This result can be strengthened in various ways. One can give an 
explicit lower bound on the Gr\"otzsch defect $\delta$: the basin at 
infinity alone occupies an annulus of modulus at least $1/(2k\log 
d)$, so $\delta>1/(2k\log d)$ (compare \cite{BE}, Theorem~B and 
especially the first half of the proof). Moreover, one can deduce an 
inequality between the Ecalle height of the critical orbit and the fixed 
point index: if the critical value has Ecalle height $h$, one can 
estimate the conformal modulus of the largest embedded annulus in 
$C_i(H)$ that separates the critical value from the upper boundary 
(this is a classical extremal length estimate; the modulus is 
$H-|h|+o(1)$), and this gives a correspondingly greater upper bound 
on the fixed point index (after all, we know that for large Ecalle 
heights $h$,  the fixed point index must become greater than $1$). 
Combining both facts, this implies a definite interval of Ecalle 
heights around $0$, depending only on $d$, for which the parabolic 
arc does not meet bifurcating components.
\end{remark}

\begin{theorem}[Decorations Along Parabolic Arc]
\label{Thm:ArcDecorations}
Every parabolic arc on a hyperbolic component of odd period $k$ has 
Ecalle heights $h_1,h_2,h_3, h'_1,h'_2,h'_3\in\R$ so that 
$h_3>0>h'_3$, $h_3>h_2$, $h'_2>h'_3$ satisfying the following 
properties:
\begin{itemize}
\item
the sub-arc with Ecalle heights $h>h_3$ is an arc of bifurcation to a 
component of period $2k$; and also for Ecalle heights $h<h'_3$;
\item
the sub-arc with Ecalle heights $h\in (h_2,h_3)$ is the limit of 
decorations (attached to the period $2k$ components bifurcating for 
large positive Ecalle heights); and also for Ecalle heights 
$h\in(h'_3,h'_2)$;
\item
if the arc is a root arc, then the sub-arc with Ecalle heights 
$h\in[h'_1,h_1]$ is the limit of the ``umbilical cord''.
\end{itemize}
\end{theorem}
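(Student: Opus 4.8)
The plan is to combine three ingredients that have all been developed in the paper: the fixed-point index computation along parabolic arcs (Proposition ``Fixed Point Index on Parabolic Arc'' and Theorem~\ref{Thm:BifurcationArcs}), the ``no bifurcation at Ecalle height zero'' result (Theorem~\ref{Thm:EcalleHeightZeroNoBifurcation}), and the ray-approximation mechanism (Theorem~\ref{Thm:RaysApproximatingArc}) together with Lemma~\ref{Lem:ApproxRayPairs}. The Ecalle heights $h_3$ and $h_3'$ will come from the first two: since the fixed point index is real-analytic in the Ecalle height, tends to $+\infty$ at both ends, and equals some value $>1$ eventually, Corollary~\ref{Cor:BifArcs} and Theorem~\ref{Thm:BifurcationArcs} give that the sub-arcs with $h$ large enough positive (resp.\ negative) are arcs of bifurcation to a period-$2k$ component; let $h_3$ (resp.\ $h_3'$) be the infimum (resp.\ supremum) of such heights that lies above (resp.\ below) the Ecalle-height-zero neighborhood furnished by Theorem~\ref{Thm:EcalleHeightZeroNoBifurcation} (which guarantees $h_3>0>h_3'$).

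Next I would produce the decoration heights $h_2<h_3$ and $h_2'>h_3'$. The decorations attached to a bifurcating period-$2k$ component $W$ are, dynamically, carried by dynamic rays landing near the characteristic parabolic point; translating this through the perturbed Fatou coordinates of Section~\ref{Sec:ParabolicPerturbations}, exactly as in the proof of Theorem~\ref{Thm:RaysApproximatingArc}, each such decoration accumulates on a sub-arc of $\mathcal A$ whose Ecalle heights form an interval. The key point is that these decorations sit ``just below'' the bifurcation sub-arc: their limiting Ecalle heights on $\mathcal A$ fill an interval of the form $(h_2,h_3)$ for some $h_2<h_3$ (and symmetrically $(h_3',h_2')$ below), because the relevant approximating preperiodic points (the $w_n$ of Lemma~\ref{Lem:ApproxRayPairs}, or their analogues attached to $W$) have Ecalle heights strictly between $0$ and the extremal Ecalle height $h^+$ of the characteristic rays, and the transit-map spiralling of Proposition~\ref{Prop:PerturbedFatouCoords} forces the critical value to meet these rays at parameters with Ecalle heights ranging over the whole open interval determined by those heights. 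One has to check that this interval genuinely abuts $h_3$ from below and is nonempty; the argument is the same real-analytic-dependence-of-Ecalle-heights argument used to produce $\tau$ in the proof of Theorem~\ref{Thm:RaysApproximatingArc}.

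Finally, the umbilical-cord heights $[h_1',h_1]$: the umbilical cord is the parameter analogue of the curve in the filled Julia set connecting the main component to the renormalization locus, i.e.\ (a piece of) the parameter ray pair whose dynamic counterparts are the rays landing at $z_1$ itself. In the root-arc case, $R_c(\phi)$ and $R_c(\phi')$ have extremal Ecalle heights $h^+=-h^-$, and the same Fatou-coordinate transit argument shows that the parameter rays whose dynamic rays approximate these accumulate on the sub-arc of $\mathcal A$ with Ecalle heights in a compact subinterval of $(h^-,h^+)$; choosing $h_1<h^+$ and $h_1'=-h_1>h^-$ so that the critical-value Ecalle height $h(c_t)$ still dominates as in Theorem~\ref{Thm:RaysApproximatingArc}'s choice of $\tau$ gives the claimed interval $[h_1',h_1]$. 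I would state these three constructions in sequence and note that the inequalities $h_3>0>h_3'$, $h_3>h_2$, $h_2'>h_3'$ are built into the choices.

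The main obstacle I expect is bookkeeping rather than a new idea: one must verify that the decoration sub-arc really is an interval of heights $(h_2,h_3)$ with $h_2<h_3$ — i.e.\ that decorations accumulate on an honest open sub-arc and that this sub-arc sits immediately below the bifurcation sub-arc rather than overlapping it or being separated from it — and that the ``limit of decorations'' and ``limit of the umbilical cord'' are disjoint phenomena occupying the stated nested ranges. This requires a careful analysis of which dynamic rays (landing at $z_1$ versus at the nearby preperiodic points $w_n$ of Lemma~\ref{Lem:ApproxRayPairs}) are being transported and at what Ecalle heights they spiral, together with the monotone comparison $h(c_t)$ versus $t$ that was already isolated in the proof of Theorem~\ref{Thm:RaysApproximatingArc}; none of it is conceptually hard, but it is where the real work lies. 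I would therefore structure the proof as: (1) bifurcation sub-arcs and definition of $h_3,h_3'$; (2) decoration sub-arcs and definition of $h_2,h_2'$, citing Theorem~\ref{Thm:RaysApproximatingArc}'s transit machinery; (3) umbilical cord and definition of $h_1,h_1'$; and (4) collect the inequalities.
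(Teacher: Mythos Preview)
Your plan for $h_3,h_3'$ (via the fixed-point index and Theorem~\ref{Thm:EcalleHeightZeroNoBifurcation}) matches the paper exactly, and your treatment of the umbilical cord via Theorem~\ref{Thm:RaysApproximatingArc} is the natural one; the paper's sketch in fact says nothing further about $h_1,h_1'$.

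For the decorations, however, the paper takes a different and more direct route than yours. Rather than invoking the preperiodic points $w_n$ of Lemma~\ref{Lem:ApproxRayPairs} and the ray machinery of Theorem~\ref{Thm:RaysApproximatingArc}, the paper projects the \emph{boundary of the Fatou component} into the outgoing Ecalle cylinder at the parameter $c(h_3)$. Since this boundary is not an analytic curve it is not a geodesic in the cylinder, so its projection occupies a genuine interval $(h',h'')$ of Ecalle heights; the key (stated but not argued) claim is $h''=h_3$. Then for parameters $c(h)$ with $h$ slightly below $h_3$ one has $h'(h)<h$, so both Julia-set points and escaping points occur in the outgoing cylinder at Ecalle height $h$, and the transit argument shows $c(h)$ is approximated by parameters both inside and outside $\Md$. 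This buys generality: it works for every parabolic arc, including co-root arcs and non-prime ones, whereas your approach is tied to the parabolic tree and the OPPPP hypotheses of Lemma~\ref{Lem:ApproxRayPairs}. Your ``analogues attached to $W$'' would need to be made precise for co-root arcs, where only one ray lands at the characteristic point and the tree machinery is unavailable; the Fatou-boundary argument sidesteps this entirely. The paper's approach also makes the abutment $h_2<h_3$ automatic via $h''=h_3$, which is exactly the bookkeeping obstacle you flagged.
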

Note that by Theorem~\ref{Thm:EcalleHeightZeroNoBifurcation} we have 
$h_3> 0>h'_3$ (the two hyperbolic components near the end of the 
parabolic arc are disjoint), but we do not know whether always 
$h_2>0>h'_2$ (if $h_2<h'_2$, this would mean that the decorations 
from the period $2k$ components at both ends of the arc would 
overlap; this would imply $h_1\ge |h'_2|$). We clearly have 
$h_3>\max\{h_2,|h'_2|, h_1\}$. Loss of pathwise connectivity of the 
umbilical cord occurs whenever $h_1>0$.

If $h_1<h_2$, then we have an ``open beach'' where the boundary of 
the multicorn locally equals just the parabolic arc without further 
decorations. We do not know whether there are infinitely many 
parabolic arcs for which this occurs.

\begin{figure}[htbp]
\framebox{\Includegraphics{height=65mm,trim=0 0 000 0}{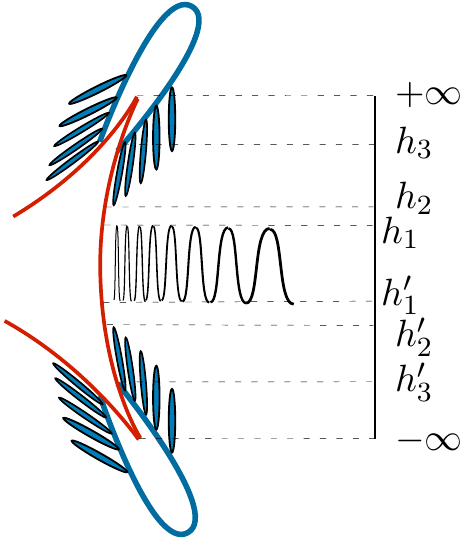}}
\framebox{\Includegraphics{height=65mm,trim=0 000 0 0,clip}{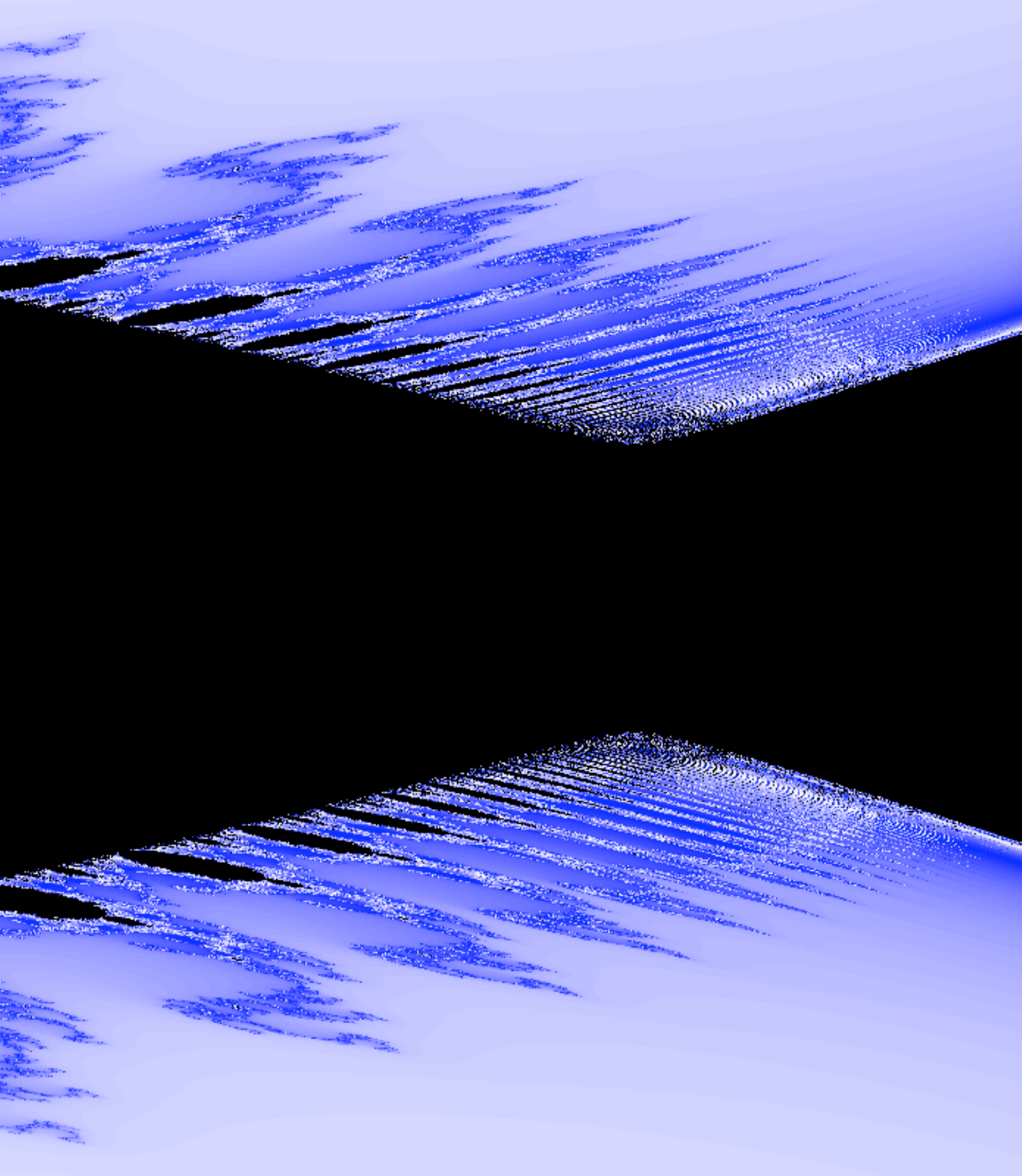}}
\caption{Illustration of Theorem~\ref{Thm:ArcDecorations}. Left: schematic illustration of the decorations along parabolic arcs, together with their threshold heights. Right: 
decorations at a period $2$ component that accumulate along arcs on the boundary of the period $1$ component.}
\end{figure}

\begin{proof}[Sketch]
For sufficiently large Ecalle heights, the parabolic arc is on the
locus of bifurcation from odd period $k$ to period $2k$
(Corollary~\ref{Cor:BifArcs}). Let $h_3\in\R$ be the infimum of such
Ecalle heights, and consider $p_c$ for $c$ on this parabolic arc with
Ecalle height $h_3$. The boundary of the Fatou component, projected
into the outgoing Ecalle cylinder, is not a geodesic in this cylinder
(because this boundary is not an analytic curve). It will thus
project into the cylinder at an interval $(h',h'')$ of Ecalle heights
with $h''$ strictly greater than $h'$. In fact, we have $h''=h_3$.

For parameters $c(h)$ with $h$ slightly less than $h_3$, the lower
Ecalle height $h'$ depends on $h$, but there is an interval
$(h_2,h_3)$ when $h'(h)<h$. For these, both escaping points and the
Julia set intersect the outgoing Ecalle cylinder at Ecalle height in
a neighborhood of $h$, so the parameter $c(h)$ can be approximated by
parameters inside and outside of $\Md$.
\end{proof}

\begin{remark}
Each parabolic arc contains the accumulation set of one or two 
periodic parameter rays (two rays for root arcs, one for co-root 
arcs). By symmetry, the rays accumulating at the boundary arcs of the 
period $1$ hyperbolic component actually land. We believe that all 
the other periodic parameter rays do not land, at least those that 
accumulate at root arcs. Instead, we believe that they 
accumulate at a sub-arc of positive length.

The reason is as follows. For a parameter $c$ on a root arc of 
period $n\ge 3$, the parabolic periodic point is the landing point of 
$2$ dynamic rays. These form hyperbolic geodesics in the access that 
is on one side bounded by a periodic Fatou component, and on the 
other side by the parabolic tree, decorated by various structures of 
the Julia set. Even though the ray is an analytic curve, it would 
seem to require a miracle that the boundaries of the access at the 
two sides are symmetric enough so that the ray projects to an equator 
in the Ecalle cylinder. But if it does not project to an equator, but 
has varying Ecalle height instead, then these wiggles will transfer 
into parameter space to a ray that accumulates on the parabolic arc 
like a topologist's sine curve (see Figure~\ref{Fig:WigglyDynamicRays}).
\end{remark}

\begin{figure}
\hide{
\framebox{\Includegraphics{height=60mm,trim=0 0 0 30,clip}{EcalleCylinderJuliaRays2.pdf}}
}
\framebox{\Includegraphics{height=60mm,trim=0 0 0 30,clip}{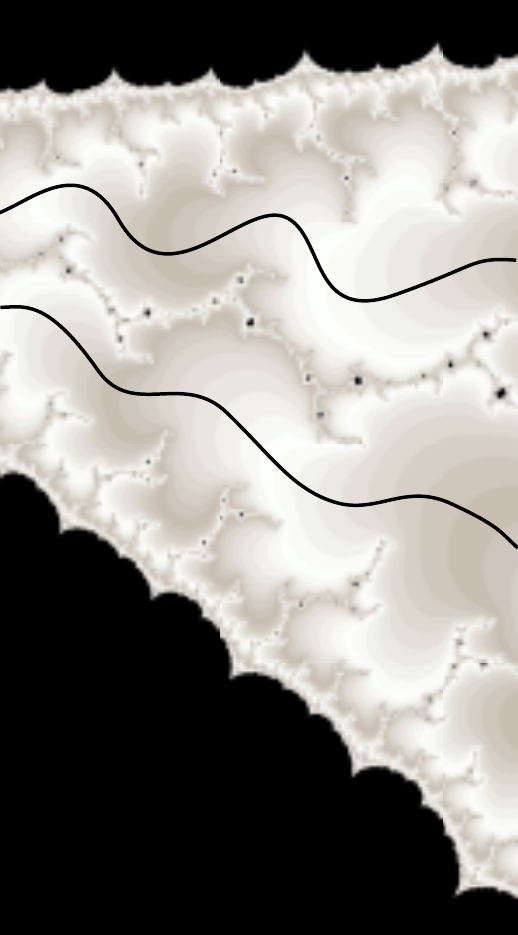}}

\caption{Heuristic argument why parameter rays should not land at parabolic root arcs but rather accumulate at a sub-arc of positive length. Sketch of the situation in the parabolic dynamics: near the top and bottom, there is the parabolic Fatou component (black, visible only in the bottom); in the middle there is the Julia set around the parabolic tree, and the two black curves are the two dynamic rays landing at the parabolic periodic point. They have no reason to have constant Ecalle height between two very different structures. While the parabolic basin is not stable under perturbations, the rays move continuously and keep their wiggles. }
\label{Fig:WigglyDynamicRays}
\end{figure}

\hide{
\begin{proposition}
The boundary of every period $2n$ component that bifurcates from a 
period $n$ component with $n$ odd intersects the boundary of the 
latter in two isolated points.
\end{proposition}
\begin{proof}[Sketch]
The intersection of the boundaries is on a parabolic arc at a point 
with fixed point index $1$.
\end{proof}
}

\begin{remark}
It is tempting to try to show for hyperbolic components of odd period 
$k$ that the bifurcating period $2k$ component has wiggly boundary 
near the parabolic arc, transferring the wiggly boundary of the Fatou 
component to parameter space (using the fact that these Fatou 
components do not have analytic boundary arcs). However, this Fatou 
component is not stable under perturbation away from the parabolic 
arc, and we do not obtain wiggles in parameter space: in fact, it 
follows from Theorem~\ref{Thm:BifurcationArcs} that the boundary of 
the period $2k$ component has a well-defined limit point on the 
parabolic arc: a simple parabolic with fixed point index $+1$, and 
those are isolated. What can be transferred into parameter space are 
the repelling periodic points on the boundary of the Fatou component, 
and the rays landing at them. These yield the decorations of the 
period $2k$ components that accumulate at parabolic arcs in a 
comb-like manner, as described in Theorem~\ref{Thm:ArcDecorations}.
\end{remark}


\end{document}